\documentclass[11pt,reqno]{article}

\usepackage{lmodern}
\usepackage{amsmath}
\usepackage{amsthm}
\usepackage[margin=1in]{geometry}
\usepackage[a-1a]{pdfx}
\usepackage{amsxtra}
\usepackage{amsfonts}
\usepackage{amssymb}
\usepackage[T1]{fontenc}

\usepackage{graphicx}
\usepackage{subfig}
\usepackage{cite}
\usepackage{authblk}
\hypersetup{colorlinks,breaklinks,
             linkcolor=blue,urlcolor=blue,
             anchorcolor=blue,citecolor=blue}
\usepackage{color}
\usepackage{xcolor}
\usepackage{hyperref}
\usepackage{array}
\usepackage{booktabs}
\usepackage{enumerate}
\usepackage{dsfont}
\usepackage{mathrsfs,amsfonts,amsmath,amssymb,amsthm}
\usepackage{dirtytalk}
\usepackage{authblk}
\usepackage{todonotes}

\usepackage{fancyhdr}

\newtheorem{theorem}{Theorem}
\newtheorem{lemma}[theorem]{Lemma}
\newtheorem{corollary}[theorem]{Corollary}

\newtheorem{proposition}[theorem]{Proposition}
\theoremstyle{definition}
\newtheorem{remark}[theorem]{Remark}

\numberwithin{equation}{section}
\numberwithin{figure}{section}
\numberwithin{theorem}{section}

\let\ker\relax
\DeclareMathOperator*{\ker}{ker}
\DeclareMathOperator*{\tr}{tr}
\DeclareMathOperator*{\spn}{span}

\DeclareMathOperator{\Lip}{Lip}

\let\div\relax
\DeclareMathOperator{\div}{div}

\newcommand{\md}{{m\to d}}
\newcommand{\hd}{{3\to d}}

\renewcommand{\bar}[1]{{\overline{#1}}}
\renewcommand{\tilde}[1]{\widetilde{#1}}

\newcommand{\eps}{\varepsilon}
\renewcommand{\epsilon}{\varepsilon}
\renewcommand{\phi}{\varphi}

\newcommand{\st}{\ | \ }
\renewcommand{\d}[1]{\,{\rm d}#1}

\newcommand{\dt}{\d t}

\renewcommand{\H}{{\mathcal H}}
\newcommand{\A}{{\mathcal A}}

\newcommand{\C}{{\mathcal C}}

\renewcommand{\H}{{\mathcal H}}

\renewcommand{\L}{{\mathcal L}}

\newcommand{\R}{\mathbb{R}}
\renewcommand{\S}{\mathbb{S}}

\newcommand{\M}{\mathcal{M}}

\def\Xint#1{\mathchoice
{\XXint\displaystyle\textstyle{#1}}%
{\XXint\textstyle\scriptstyle{#1}}%
{\XXint\scriptstyle\scriptscriptstyle{#1}}%
{\XXint\scriptscriptstyle\scriptscriptstyle{#1}}%
\!\int}
\def\XXint#1#2#3{{\setbox0=\hbox{$#1{#2#3}{\int}$ }
\vcenter{\hbox{$#2#3$ }}\kern-.6\wd0}}

\def\dashint{\,\Xint-}

\newcommand{\Jc}{\mathcal{J}}

\newcommand{\be}{\begin{equation}}
\newcommand{\ee}{\end{equation}}

\newcommand\restr[2]{{
  \left.\kern-\nulldelimiterspace 
  #1 
  \vphantom{\big|} 
  \right|_{#2} 
  }}

\graphicspath{{./figs/}}

\usepackage{mathtools}
\mathtoolsset{showonlyrefs} 

\newcommand{\kom}[1]{}
\title{A counterexample to the pointwise validity of an asymptotic mean value property for $p$-harmonic functions\thanks{{\bf Funding:}   \'AA  is supported  by the MICIN/AEI through the Grant PID2021-123151NB-I00, JC acknowledges funding from an Albert and Dorothy Marden Professorship, a Simons Fellowship, and National Science Foundation Grant DMS:2436333, and MP from  the Research Council of Finland (project 360185). The proof development and the preparation of this note were assisted by ChatGPT and Claude.  The authors take full responsibility for the contents of the paper.
{\bf Source Code:} The verifier and certificates for the computer assisted proofs are
archived at \cite{arroyo2026capcode}. The code is also available along with non-rigorous Python versions at 
\url{https://github.com/jwcalder/Mean-Value-Counterexample}.}}

\author[1]{Ángel Arroyo}
\affil[1]{Departamento de Matem\'aticas, Universidad de Alicante}
\author[2]{Jeff Calder}
\affil[2]{School of Mathematics, University of Minnesota}
\author[3]{Mikko Parviainen}
\affil[3]{Department of Mathematics and Statistics, University of Jyväskylä}

\graphicspath{{./code/}}

\begin{document}

\maketitle

\begin{abstract}
We give a counterexample showing that an asymptotic mean value property for $p$-harmonic functions in $\R^d$ does not necessarily hold in the pointwise sense for $d\geq 3$ and $p>2$ near $2$. The proof uses a perturbation argument from $p=2$ and the Implicit Function Theorem to establish the counterexample. We also use a computer assisted proof to give an explicit range of values of $p$ for which the counterexample holds. Finally, we also establish a counterexample for the pointwise asymptotic variational mean value property.
\end{abstract}

\textbf{Keywords:} Mean-value property, $p$-harmonic functions, $p$-harmonious functions.

\tableofcontents

\section{Introduction}

A harmonic function $u:\Omega \to \R$ satisfies the mean value property 
\[u(x) = \dashint_{B_r(x)} u(y) \, dy = \dashint_{\partial B_r(x)}u(y) \, dS(y)\]
for all balls $B_r(x)\subset \Omega$, and conversely, any function satisfying the mean value property for all such balls is harmonic. A similar characterization involving the local mean value operator
\begin{equation}\label{eq:Meps}
\M_\eps u(x) = \frac{\alpha}{2}\left( \max_{B_\eps(x)}u + \min_{B_\eps(x)}u\right) + \beta \dashint_{B_\eps(x)}u(y)\, dy,
\end{equation}
where
\[\alpha = \frac{p-2}{p+d} \ \ \text{and} \ \  \beta = \frac{2+d}{p+d},\]
is known for $p$-harmonic functions by \cite{manfredi2010asymptotic}, though the mean value property holds in the \emph{viscosity sense}, and only asymptotically as $\eps\to 0$. Such mean value formulas also play a role in game-theoretic interpretations of the $p$-Laplacian; see \cite{peresssw09,peress08,manfredipr12,lewicka20,blancr19}, and for the parabolic case, see \cite{manfredipr10,parviainen24}. Moreover, these versions have been applied in machine learning \cite{calder19,calderd24,flores2022algorithms,calder2019lip,kyng2015algorithms}. By now, several other versions of the asymptotic $p$-Laplacian mean value property are also known in the literature; see, for example, \cite{hartenstine2011asymptotic,giorgi2012mean,kawohl2012solutions,ishiwata2017natural,delteso2021mean,arroyop24}.

For any twice continuously differentiable $p$-harmonic function $u$ with $\nabla u(x)\neq 0$, one can easily show that 
\begin{equation}\label{eq:pointwise}
u(x) = \M_\eps u(x) + o(\eps^2) \ \ \text{as} \ \ \eps \to 0.
\end{equation}
The definition of the  normalized or game theoretic $p$-Laplace operator is given in this case by 
\begin{equation}\label{eq:plap}
\Delta_p^N u = \Delta u + (p-2)\Delta_\infty^N u,
\end{equation}
where 
\[
\Delta_\infty^N u = \frac{\nabla u \cdot D^2 u \nabla u}{|\nabla u|^2} \ \ \text{whenever} \ \ \nabla u\neq 0.
\]
A $p$-harmonic function is a viscosity solution of $\Delta_p^N u=0$, which is equivalent to a weak solution of the divergence form equation $\div(|\nabla u|^{p-2}\nabla u) = 0$ when $1 < p < \infty$ \cite{juutinen2001equivalence, kawohl2012solutions}. 

While the expansion \eqref{eq:pointwise} holds for $C^2$ $p$-harmonic functions, the regularity of $p$-harmonic functions is in general  $C^{1,\gamma}$ near a critical point. Thus, the direct Taylor expansion argument for \eqref{eq:pointwise} is generally unavailable. Nevertheless, it was shown in \cite{manfredi2010asymptotic} that a function is $p$-harmonic in the viscosity sense for $1 < p \leq \infty$ if and only if the asymptotic mean value property \eqref{eq:pointwise} holds in the viscosity sense for all $x$, which involves touching with smooth test functions and checking the expansion with inequality for the test functions. An asymptotic expansion holding \emph{in the viscosity sense} does not imply it holds in the pointwise sense, since \cite{manfredi2010asymptotic}  shows, using the Aronsson function \cite{aronsson84} as a counterexample, that the pointwise expansion \emph{does not} hold for $p=\infty$.

Whether the asymptotic mean value property \eqref{eq:pointwise} holds pointwise for $p$-harmonic functions has been only partially resolved. It was observed by Lindqvist and Manfredi in \cite{lindqvistm16} that in the plane $d=2$ for $p$ less than a number around 9.5, the asymptotic expansion holds pointwise. They also stated implicitly the question of the present paper by saying: 'Unfortunately, in space nothing seems to be known about the critical points.' Later in \cite{arroyo2016asymptotic} it was shown that the expansion is valid pointwise in the plane for $1<p<\infty$. In this paper, we show that the expansion \eqref{eq:pointwise} \emph{does not} hold pointwise for $d\geq 3$ and $p>2$ near to $p=2$. In particular, we prove the following theorem. 
\begin{theorem}\label{thm:main}
There exists $\delta>0$ such that for every $2 < p < 2+\delta$ we can find a $p$-harmonic function $u_p\in C^{1,\gamma}_{loc}(\R^d)\cap C^\infty(\R^d\setminus \{0\})$ with $u_p(0)=0$ and $\nabla u_p(0)=0$ for which 
\begin{equation}\label{eq:fail}
 \lim_{\eps\to 0} \frac{\M_\eps u_p(0)-u_p(0)}{\eps^2}=\lim_{\eps\to 0}\eps^{-2}\M_\eps u_p(0) = -\infty.
\end{equation}
\end{theorem}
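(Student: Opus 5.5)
The plan is to build $u_p$ as a homogeneous $p$-harmonic function $u_p(x)=|x|^{k(p)}\phi_p(x/|x|)$ that bifurcates from a quadratic harmonic polynomial at $p=2$. The whole proof then reduces to two sign conditions.

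By homogeneity, $\M_\eps u_p(0)=\eps^{k}\M_1 u_p(0)$. Hence
\[
\eps^{-2}\M_\eps u_p(0)=\eps^{k-2}\M_1u_p(0)\to-\infty
\]
as soon as $k(p)<2$ and $\M_1u_p(0)<0$. At the same time, $k(p)>1$ gives $u_p(0)=0$, $\nabla u_p(0)=0$ and $u_p\in C^{1,\gamma}_{loc}$.

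To make the perturbation problem nondegenerate, I impose $O(d-1)\times\mathbb{Z}_2$ symmetry: invariance under rotations of $x'=(x_1,\dots,x_{d-1})$ and under $x_d\mapsto -x_d$. In this symmetry class the degree-$2$ spherical harmonics are spanned by the single function
\[
Q=|x'|^2-(d-1)x_d^2 .
\]
The angular profile then depends only on the polar angle, so the problem becomes an ODE on $[0,\pi/2]$ with Neumann-type symmetry conditions.

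Next I substitute $u=r^k\phi$ into $\Delta u+(p-2)\Delta_\infty^N u=0$. This gives a nonlinear equation
\[
F(\phi,k,p)=\Delta_S\phi+k(k+d-2)\phi+(p-2)G_k(\phi)=0 \quad\text{on } \S^{d-1},
\]
where $G_k(\phi)$ is the normalized $\infty$-Laplacian term restricted to the sphere. I fix the normalization $\langle\phi,Q\rangle_{L^2(\S^{d-1})}=\langle Q,Q\rangle$ and apply the Implicit Function Theorem to $(\phi,k)$ near $(Q,2)$ at $p=2$.

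The linearization in $\phi$ is $\Delta_S+2d$. In the symmetry class its kernel is exactly $\spn\{Q\}$. The normalization removes that kernel direction, and the $\partial_k$ derivative, which equals $(2k+d-2)Q=(d+2)Q$, fills the cokernel, so the linearized map is an isomorphism. I will work in Hölder spaces $C^{2,\gamma}$ on the sphere.

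The key obstacle is that $G_k$ is singular where $\nabla u=0$. For $Q$ itself, $\nabla Q\neq0$ away from the origin, because $\nabla Q=2Ax$ with $A$ invertible. This nonvanishing persists for $C^1$-close perturbations $r^k\phi$: the radial and tangential gradient components cannot vanish simultaneously. So $F$ is smooth near $(Q,2,2)$. The resulting $u_p$ is a classical, hence viscosity, solution on $\R^d\setminus\{0\}$ and is $C^\infty$ there by elliptic regularity. Since it is $C^1$ with $\nabla u_p(0)=0$, removability at the origin for viscosity solutions, or the equivalent weak formulation, makes it $p$-harmonic on $\R^d$.

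The expansion $k=2+(p-2)\kappa+o(p-2)$ and $\phi=Q+(p-2)\psi+o(p-2)$ follows by testing the first-order equation against $Q$:
\[
\kappa=-\frac{\int_{\S^{d-1}}Q\,\Delta_\infty^N Q}{(d+2)\int_{\S^{d-1}}Q^2},\qquad \Delta_\infty^NQ=\frac{2\,x\cdot A^3x}{x\cdot A^2x}.
\]
So I need $\int Q\,\Delta_\infty^NQ>0$, which is a one-dimensional integral in the polar angle that I will evaluate directly.

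For $\M_1u_p(0)$, I expand to first order in $p-2$. The max/min term contributes
\[
\frac{p-2}{2(p+d)}(\max_{B_1}Q+\min_{B_1}Q)=\frac{p-2}{2(p+d)}\bigl(1-(d-1)\bigr),
\]
which is negative for $d\ge3$. The ball average is
\[
\beta\,\frac{d}{k+d}\dashint_{\S^{d-1}}\phi .
\]
Testing the first-order equation against constants gives $2d\int\psi=-\int\Delta_\infty^NQ$, so this term is also of order $p-2$. I will compute the sum of these two contributions and check that it is strictly negative; that, together with $\kappa<0$, gives $k(p)<2$ and $\M_1u_p(0)<0$ for $p\in(2,2+\delta)$.

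I expect the main obstacle to be these two explicit sign computations. For general $d\ge3$ they are integrals of ratios such as $x\cdot A^3x/x\cdot A^2x$ over the sphere. If they come out with the wrong sign for this particular $Q$, I would try other symmetric choices of $Q$, for example $O(m)\times O(d-m)$-invariant ones $(d-m)|x_{\le m}|^2-m|x_{>m}|^2$, and pick one for which both signs work.
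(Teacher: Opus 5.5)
Your outline follows the paper's strategy: an Implicit Function Theorem branch from an axially symmetric degree-$2$ harmonic, with the exponent as an extra unknown filling the cokernel, a normalization removing the kernel, and first-order data from testing against $Q$ and against constants. However, the decisive sign step fails as you have set it up.

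Your $Q=|x'|^2-(d-1)x_d^2$ equals $-d\,(x_d^2-\tfrac{|x|^2}{d})$, that is, $-d$ times the paper's starting polynomial. Both $\Delta_p^N$ and $\M_\eps$ are odd, and your profile map satisfies $F(c\phi,k,p)=cF(\phi,k,p)$ for $c\neq0$. So your branch is exactly $-d$ times the paper's, and your two first-order contributions add up to
\[
\M_1u_p(0)=\frac{p-2}{2(d+2)}\Big(2-d-\dashint_{\S^{d-1}}\Delta_\infty^N Q\,dS\Big)+o(p-2).
\]
On the sphere, $-\Delta_\infty^NQ=2d^2(d-2)\frac{\omega_d^2(1-\omega_d^2)}{d(d-2)\omega_d^2+1}-2Q$, and the mean of the first term exceeds $d-2$ (for $d=3$ it is $\approx1.163$). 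So the bracket is strictly positive. The average term beats the negative max/min term you singled out, and your $u_p$ gives the limit $+\infty$, not $-\infty$.

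The repair is simply to start from $-Q=(d-1)x_d^2-|x'|^2$, or to replace $u_p$ by $-u_p$. Since $\kappa$ is invariant under $Q\mapsto-Q$, no search over other symmetry classes is needed. Your fallback family does contain the right choice, as $m=1$ up to relabeling.

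After that flip, the whole content of the theorem still sits in the two computations you defer, and the defect computation is not routine. It is a close competition between terms of opposite sign that must be won for every $d\geq3$, namely
\[
\dashint_{\S^{d-1}}\frac{2d(d-2)\,\omega_d^2(1-\omega_d^2)}{d(d-2)\omega_d^2+1}\,dS>\frac{d-2}{d}.
\]
The paper proves this by Cauchy--Schwarz against the weight $d(d-2)\omega_d^2+1$, whose mean is $d-1$. This reduces it to $8\sigma_d^2\geq d-1$ with $\sigma_d=|\S^{d-2}|/|\S^{d-1}|$, which is checked for $d=3,4$ and propagated by $\sigma_{d+2}=\frac{d}{d-1}\sigma_d$. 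The paper obtains $\int Q\,\Delta_\infty^NQ\,dS>0$ (your $\kappa<0$) by an integration by parts that produces a manifestly positive integrand. Without these two estimates your proposal is a correct outline rather than a proof.
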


Replacing $u_p$ with $-u_p$ would yield that the above limit is equal to $+\infty$. 
We give the proof of Theorem \ref{thm:main} in Section \ref{sec:main}. 

In Section~\ref{sec:num}, we also use computer assisted proofs (CAP)
and cylindrical lifting to construct counterexamples for explicit
ranges of $p>2$. In particular, we show that \eqref{eq:pointwise}
fails for suitable $p$-harmonic functions for every
$p\in[2.0000001,100]$ when $4\leq d\leq10$. For $d=3$, the same
conclusion holds outside a small exceptional interval.
Moreover, lifting allows us to cover every dimension $d\geq10$
for $p\in[2.0000001,76.5]$.
These results are stated in Corollaries~\ref{cor:capgapfree}
and~\ref{cor:tail}. We note that there is nothing intrinsic about the range of $p$ values covered by the computer assisted proofs---aside from the aforementioned narrow exceptional interval for $d=3$, all intervals can be enlarged at the cost of additional computation time, provided $p>2$. Let us emphasize that the computer assisted proof utilizes a numerical method with certified error bounds to produce a rigorous mathematical proof.  These are well established methods that have been used, for example, in establishing new results for the Navier-Stokes equation.

Finally, we also consider the mean value property proposed
in \cite{ishiwata2017natural}, which is based on a variational
analogue of the usual average. We show that the functions
constructed in Theorem~\ref{thm:main} also give counterexamples
to the pointwise asymptotic version of this property for every
$d\geq3$ and $p>2$ sufficiently close to $2$;
see Theorem~\ref{thm:main2}.

\subsection{Ideas of the proofs}

We start with the idea of the proof of Theorem \ref{thm:main}. The counterexample $u_p$ is produced by a perturbation argument from the $p=2$ setting using the Implicit Function Theorem. To be more precise, we show that
\begin{equation*}
    u_p(0)=0,\qquad\M_\eps u_p(0)=\frac{\C(p)}{p+d}\eps^{\lambda(p)},
\end{equation*}
where $\lambda(p)$ and $\C(p)$ are constants depending on $u_p$. From this, it is clear that the validity of \eqref{eq:pointwise} depends on the explicit form of the constants. In particular, we show that $\lambda(p)<2$ for $p>2$ close to $2$, which implies the failure of \eqref{eq:pointwise} if we can show that $\C(p)\neq0$. Indeed, by observing that $\C(2)=0$ and that $\C'(2)<0$, we obtain that $\C(p)<0$ for $p$ close to $2$, which will conclude the proof of Theorem \ref{thm:main}.

In what follows we explain the strategy for proving the existence of such functions. In order to do that, we decompose each $x\in\R^d$ into its radial and spherical parts, that is we write $x=r\omega$, where
\[r = |x| \ \ \text{and} \ \ \omega = \frac{x}{|x|} \in \S^{d-1}.\]
In this work we consider homogeneous functions of the form
\begin{align*}
    u(x) = r^{\lambda}v(\omega),
\end{align*}
with $\lambda>0$ and $v:\S^{d-1}\to\R$. The idea is to use the homogeneity of $u$ in order to split the normalized $p$-Laplacian of $u$ into its radial and spherical parts. More precisely, since $u$ is homogeneous of degree $\lambda$, its normalized
$p$-Laplacian ($1<p\leq\infty$) is homogeneous of degree $\lambda-2$, which allows us to factorize the operator as
\begin{equation}\label{eq:pLap-Lplambda}
    \Delta_p^N(r^\lambda v)=r^{\lambda-2}\L_p^\lambda v,
\end{equation}
where $\L_p^\lambda v:\S^{d-1}\to\R$ is the spherical part of $\Delta_p^N(r^\lambda v)$. In this way, the normalized $p$-Laplace equation
\[
\Delta_p^N  u
=0
\]
reduces, for $u=r^\lambda v$, to the spherical equation
\begin{align}
\label{eq:sphericalpart}
\L_p^\lambda v
=0.
\end{align}

The case $p=2$ is well-known. Solving the Laplace equation $\Delta u=0$ among functions of the form $u=r^\lambda v$ is equivalent to solving $\L_2^\lambda v=0$ for $v:\S^{d-1}\to\R$. Such functions can be characterized as the class of harmonic homogeneous polynomials of degree $\lambda$. Indeed, an important example for us is the harmonic homogeneous quadratic polynomial
\begin{equation}\label{eq:u2}
u(x) = x_d^2 - \frac{|x|^2}{d},
\end{equation}
which can be written in the form $u=r^\lambda v$ with
\begin{equation}\label{eq:v2}
    \lambda=2\quad\text{ and }\quad v(\omega) = \omega_d^2 - \frac{1}{d}.
\end{equation}
In this case $v$ solves the equation $\L_2^2v=0$.
Taking this harmonic polynomial as a starting point, we will construct a $p$-harmonic function
\begin{equation}\label{eq:uform}
u_p(x) = r^{\lambda(p)}v_p(\omega)
\end{equation}
for $p$ near $2$ by finding a smooth perturbation of the coefficients $\lambda(p)>0$ and $v_p:\S^{d-1}\to \R$ such that
\begin{equation*}
    \L_p^{\lambda(p)}v_p=0
    \quad\text{ with }\quad\lambda(2)=2
    \quad\text{ and }\quad
    v_p(\omega)\Big|_{p=2}
    =
    v_2(\omega)=\omega_d^2-\frac{1}{d}.
\end{equation*}
The assignment $p\longmapsto(\lambda(p),v_p)$ will be determined by means of the Implicit Function Theorem in Banach spaces taking as a base point the equation 
\begin{equation*}
    \L_2^{\lambda(2)}v_2=0.
\end{equation*}

It is worth noting that with this smooth perturbation we will take advantage of the fact that $u_2$ is \emph{skewed} in the $e_d$ direction for $d\geq 3$, and so will be $u_p$ for every $p$ sufficiently close to $2$. This is the key property of $u_2$ that will lead to show that $u_p$ is the desired counterexample. 
In the variational case, Section~\ref{sec:variational}, the counterexample is essentially the same. 

Next we briefly discuss the idea of computer assisted results Corollaries~\ref{cor:capgapfree}
and~\ref{cor:tail}. Axial symmetry reduces the spherical equation
to an ordinary differential equation, with the homogeneity exponent
$\lambda$ as a shooting parameter. After controlling the singular
endpoint analytically, we use validated numerical integration and
the intermediate value theorem to obtain profiles with
$1<\lambda<2$ and $\C(p)\neq0$ for every $p$ in the certified
intervals. We then lift lower dimensional examples by adding
variables upon which the functions do not depend. This preserves
$p$-harmonicity but changes the mean-value defect, allowing us
to fill gaps left by the direct construction. Moreover, a sign
condition on the spherical average ensures that negative defects
persist under further lifting, giving the result in all higher
dimensions for the stated range of $p$.


\section{Proof of Theorem \ref{thm:main}}\label{sec:main}

\subsection{The spherical operator $\L_p^\lambda v$}

Let $u(x)=r^\lambda v(\omega)$ with $\lambda>0$ and $v\in C^2(\S^{d-1})$. As we noted in the introduction, the spherical part of the normalized $p$-Laplacian of $u$ is defined as the operator $\L_p^\lambda$ acting on spherical functions $v:\S^{d-1}\to\R$ given by
\begin{equation}\label{eq:Lpdef}
    \L_p^\lambda v=r^{2-\lambda}\Delta_p^N(r^\lambda v)
\end{equation}
for $1<p\leq\infty$. Since $\Delta_p^Nu=\Delta u+(p-2)\Delta_\infty^N u$ for finite $p$, we can compute $\L_p^\lambda v$, as a linear combination of $\L_2^\lambda v$ and $\L_\infty^\lambda v$, which will be easier to handle, i.e.
\begin{equation}\label{eq:Lp=L2-Linf}
    \L_p^\lambda v=\L_2^\lambda v+(p-2)\L_\infty^\lambda v.
\end{equation}

We next derive the explicit form of the operator $(p,\lambda,v)\mapsto\L_p^\lambda v$, so that its Fr\'echet derivative with respect to \((\lambda,v)\) can be computed and its invertibility and other properties verified, as required by the Banach-space Implicit Function Theorem. To this end, we recall the definitions of the spherical gradient, Hessian, and Laplacian, which are given by 
\begin{equation}\label{eq:sphere_grad}
\nabla_\S v(\omega) = P\nabla v(\omega) = \nabla v(\omega) - (\nabla v(\omega)\cdot \omega) \omega,
\end{equation}
\begin{equation}\label{eq:sphere_hess}
D_\S^2 v(\omega) = P D^2 v(\omega) P - (\nabla v(\omega)\cdot \omega)P,
\end{equation}
and
\begin{equation}\label{eq:sphere_lap}
\Delta_\S v(\omega) = \tr(D^2_\S v(\omega))
=\Delta v(\omega)-\omega\cdot D^2v(\omega)\omega-(d-1)(\nabla v(\omega)\cdot\omega),
\end{equation}
where $P = I - \omega \otimes \omega$ is the projection matrix onto the tangent space of the sphere. These definitions assume that $v$ is the restriction of a function in $C^2(\R^d)$ to the sphere, though they are independent of this choice. We will often identify $v$ with any of its lifts $v:\R^d\to \R$.

By means of these spherical operators, we can obtain the following explicit forms of $\L_p^\lambda v$ for $1\leq p\leq\infty$. For $v\in C^2(\S^{d-1})$,
\begin{equation}\label{eq:2LapS}
    \L_2^\lambda v=\Delta_\S v+\lambda(\lambda+d-2)v
\end{equation}
and
\begin{equation}\label{eq:infLapS}
    \L_\infty^\lambda v=
r^{2-\lambda}\Delta_\infty^N(r^\lambda v)=:\A_\lambda v+\lambda(\lambda-1)v
\end{equation}
for $x\neq 0$ such that $\nabla u(x)\neq 0$, and thus $\A_\lambda v$ becomes
\begin{equation}\label{eq:A}
    \A_\lambda v=\frac{\nabla_\S v\cdot D_\S^2 v\,\nabla_\S v+\lambda^2 v|\nabla_\S v|^2}{|\nabla_\S v|^2+\lambda^2v^2}.
\end{equation}
As a direct consequence, by \eqref{eq:Lp=L2-Linf} we get
\begin{equation}\label{eq:pLapS}
    \L_p^\lambda v
    =
    \Delta_\S v+\lambda((p-1)\lambda+d-p)v+(p-2)\A_\lambda v.
\end{equation}

The proof of the identities \eqref{eq:2LapS}, \eqref{eq:infLapS} and \eqref{eq:pLapS} is a lengthy but direct computation, so we have postponed it to Appendix~\ref{sec:appendix}.

\subsection{The Fr\'echet derivative of $\L_p^\lambda v$}

Let $u_2(x)=r^2v_2(\omega)$ be the harmonic function with
\begin{equation*}
    v_2(\omega)=\omega_d^2-\frac{1}{d}.
\end{equation*}
Then
\begin{equation}\label{eq:p=2-eq}
    \L_2^2v_2=\Delta_\S v_2+2dv_2=0.
\end{equation}
For $v:\S^{d-1}\to\R$ we define $g=v-v_2$, so $v=v_2+g$ can be regarded as a direct additive perturbation of the $p=2$ case. For simplicity, we define the mapping
\begin{equation*}
    \Phi:\R\times \R \times C^{2,\gamma}(\S^{d-1})\to C^{0,\gamma}(\S^{d-1})
\end{equation*}
by 
\begin{equation*}
    \Phi(p,\lambda,g) := \L_p^\lambda(v_2+g).
\end{equation*}
Recalling \eqref{eq:pLapS} and the fact that $\L_2^2v_2=0$ we can write
\begin{align}\label{eq:Phi}
\Phi(p,\lambda,g)
=~&
[\lambda((p-1)\lambda+d-p)-2d]v_2 
+\Delta_\S g+\lambda((p-1)\lambda+d-p)g+(p-2)\A_\lambda(v_2+g).
\end{align}

 Here the only potentially problematic term is $\A_\lambda(v_2+g)$, which is a rational expression in $\lambda$, $v$ and its derivatives. For this reason, and strictly speaking, $\Phi$ is defined only in a sufficiently small open neighborhood of $(2,2,0)$ in which the denominator in $\A_\lambda(v_2+g)$ is bounded away from zero. Indeed, at $(\lambda,v)=(2,v_2)$, we have that $\nabla u_2=\nabla(r^2v_2)=r(\nabla_\S v_2+2v_2\omega)$ by \eqref{eq:sphere_grad}, and since $\nabla_\S v(\omega)$ is orthogonal to $\omega$, we get
 \begin{align}
\label{eq:positivity}
|\nabla_\S v_2|^2+4v_2^2
=
\frac{|\nabla u_2|^2}{r^2}
=
\left|2\omega_de_d-\tfrac2d\omega\right|^2
=
\left(4-\tfrac8d\right)\omega_d^2+\tfrac4{d^2}
\geq \tfrac4{d^2}>0,
 \end{align}
see Corollary \ref{corollary-small-phi},
so the mapping $(\lambda,v)\mapsto|\nabla_\S v|^2+\lambda^2v^2$ is strictly positive in a neighborhood of $(\lambda,v)=(2,v_2)$, and in turn $\Phi$ is well defined.
Furthermore, $\Phi$ is a smooth mapping in the Fr\'echet sense in a neighborhood of $(2,2,0)$.

With this notation, it is clear that $\Phi(2,2,0)=0$, and  in the same way, a function $u=r^\lambda(v_2 + g)$ is $p$-harmonic away from the origin if and only if $\Phi(p,\lambda,g) = 0$. Therefore, our plan is to use the Implicit Function Theorem to find $\lambda(p)$ and $g_p\in C^{2,\gamma}(\S^{d-1})$ so that
\begin{equation*}
    \Phi(p,\lambda(p),g_p)=0
\end{equation*}
for $p$ near $2$. To do that, for fixed $p$, we denote by
\[
D_{(\lambda,g)}\Phi(p,\lambda,g):
\R\times C^{2,\gamma}(\S^{d-1})
\longrightarrow C^{0,\gamma}(\S^{d-1})
\]
the Fr\'echet derivative of the mapping $(\lambda,g)\longmapsto \Phi(p,\lambda,g)$.
In the following result we compute the explicit form of this map at $(\lambda,g)=(2,0)$ for fixed $p=2$.

\begin{proposition}\label{prop:derivative}
The Fr\'echet derivative
\[
D_{(\lambda,g)}\Phi(2,2,0):
\R\times C^{2,\gamma}(\S^{d-1})
\longrightarrow C^{0,\gamma}(\S^{d-1})
\]
is the bounded linear mapping given by
\[
D_{(\lambda,g)}\Phi(2,2,0)(\xi,h)
=
(\Delta_\S+2dI)h+(d+2)\xi v_2
\]
for every $(\xi,h)\in\R\times C^{2,\gamma}(\S^{d-1})$, where $I$ stands for the identity operator in $C^{2,\gamma}(\S^{d-1})$.
\end{proposition}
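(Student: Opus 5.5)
The plan is to compute the derivative directly from the explicit formula \eqref{eq:Phi}, using that the factor $(p-2)$ in front of the only nonlinear term vanishes at $p=2$. Setting $p=2$ in \eqref{eq:Phi}, the coefficient $\lambda((p-1)\lambda+d-p)$ becomes $\lambda(\lambda+d-2)$. Hence, for $(\lambda,g)$ in the neighborhood of $(2,0)$ where $\Phi$ is defined (see \eqref{eq:positivity}),
\[
\Phi(2,\lambda,g)=[\lambda(\lambda+d-2)-2d]v_2+\Delta_\S g+\lambda(\lambda+d-2)g .
\]
The term $(p-2)\A_\lambda(v_2+g)$ drops out entirely. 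So no derivative of the rational operator $\A_\lambda$ is needed, only the fact that it is well defined there.

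This map is a polynomial in $\lambda$ plus a term that is affine in $g$, with coefficients depending polynomially on $\lambda$. I would therefore compute the Fréchet derivative by expanding $\Phi(2,2+\xi,h)-\Phi(2,2,0)$. With $\mu(\lambda)=\lambda(\lambda+d-2)$ we have $\mu(2)=2d$ and $\mu'(2)=2\lambda+d-2\big|_{\lambda=2}=d+2$. This gives
\[
\Phi(2,2+\xi,h)=(d+2)\xi v_2+\xi^2v_2+\Delta_\S h+2dh+\big((d+2)\xi+\xi^2\big)h ,
\]
where we used $\Phi(2,2,0)=\L_2^2v_2=0$ by \eqref{eq:p=2-eq}.

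Candidate linear part: $(\Delta_\S+2dI)h+(d+2)\xi v_2$.

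The remainder is $\xi^2v_2+((d+2)\xi+\xi^2)h$. Its $C^{0,\gamma}$ norm is bounded by $C(|\xi|^2+|\xi|\,\|h\|_{C^{2,\gamma}})=o(|\xi|+\|h\|_{C^{2,\gamma}})$, since $\|h\|_{C^{0,\gamma}}\le\|h\|_{C^{2,\gamma}}$ and $v_2$ is smooth. This identifies the derivative.

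Boundedness of the linear map follows from three facts:
\begin{itemize}
\item $\Delta_\S:C^{2,\gamma}(\S^{d-1})\to C^{0,\gamma}(\S^{d-1})$ is bounded, being a second-order operator with smooth coefficients, e.g.\ via \eqref{eq:sphere_lap} applied to the $0$-homogeneous extension;
\item the inclusion $C^{2,\gamma}\hookrightarrow C^{0,\gamma}$ is bounded;
\item $\xi\mapsto\xi v_2$ is bounded from $\R$ since $v_2\in C^\infty$.
\end{itemize}

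The only point needing care is the justification that setting $p=2$ before differentiating in $(\lambda,g)$ is legitimate. This holds because the derivative is a partial derivative at fixed $p=2$. Still, one must know $\A_\lambda(v_2+g)$ is finite, i.e.\ the denominator stays positive near $(2,v_2)$. This is exactly \eqref{eq:positivity} combined with continuity of $(\lambda,g)\mapsto|\nabla_\S(v_2+g)|^2+\lambda^2(v_2+g)^2$ in $\R\times C^{2,\gamma}$.

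If one instead wants the full smoothness of $\Phi$ near $(2,2,0)$, used later for the Implicit Function Theorem, the product $(p-2)\A_\lambda$ is smooth as a composition of smooth Nemytskii-type maps with a nonvanishing denominator. Even so, its $(\lambda,g)$-derivative at $p=2$ vanishes because of the prefactor $(p-2)$. I expect no real obstacle beyond this bookkeeping.
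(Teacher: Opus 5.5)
Your proposal is correct and follows the paper's proof: the paper likewise sets $p=2$ so that the term $(p-2)\A_\lambda(v_2+g)$ drops out, and then differentiates $\Delta_\S(v_2+th)+(2+t\xi)(d+t\xi)(v_2+th)$ at $t=0$. The only difference is that the paper computes this directional derivative and relies on the previously asserted Fr\'echet smoothness of $\Phi$. Your explicit remainder bound $C(|\xi|^2+|\xi|\,\|h\|_{C^{2,\gamma}})$ instead verifies Fr\'echet differentiability at $(2,2,0)$ directly.
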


\begin{proof}
By the discussion above, the Fr\'echet derivative with respect to $(\lambda,g)$ exists near $(2,2,0)$ and can be explicitly computed as
\begin{align*}
D_{(\lambda,g)}\Phi(2,2,0)(\xi,h)
=
~&
\left.\frac{\partial}{\partial t}\right|_{t=0}
\Phi(2,2+t\xi,th)
\\
=
~&
\left.\frac{\partial}{\partial t}\right|_{t=0}\left[\Delta_\S(v_2+th)+(2+t\xi)(d+t\xi)(v_2+th)\right]
\\
=
~&
(\Delta_\S+2dI)h+(d+2)\xi v_2
\end{align*}
for every direction $(\xi,h)\in\R\times C^{2,\gamma}(\S^{d-1})$. 
\end{proof}

\subsection{Invertibility of $D_{(\lambda,g)}\Phi(2,2,0)$}

The invertibility of the partial Fr\'echet derivative with respect to
$(\lambda,g)$, required for the application of the Implicit Function Theorem,
depends on the operator $\Delta_\S+2dI$, which has a non-trivial kernel
containing all of the degree $2$ spherical harmonics. In order to address this, we restrict our domain to those $v$ that are \emph{axially symmetric} about the $e_d$ axis, that is, the functions $v$ that depend only on $\omega_d$, as $v_2$ defined in \eqref{eq:v2} does. To do this we define the axially symmetric H\"older spaces
\begin{equation}\label{eq:axially}
C^{k,\gamma}_{\rm ax}(\S^{d-1}) = \{v \in C^{k,\gamma}(\S^{d-1}) \st v(R\omega) = v(\omega) \text{ for every } R\in O(d) \text{ with } Re_d = e_d\},
\end{equation}
where $R\in O(d)$ is an orthogonal matrix.
Note that any $v\in C^{k,\gamma}_{\rm ax}(\S^{d-1})$ satisfies $v(\omega) = f(\omega_d)$ for some $f:[-1,1] \to \R$. 

Next observe that restricting to $C^{k,\gamma}_{\rm ax}(\S^{d-1})$ is not enough: obviously the axially symmetric degree 2 spherical harmonics are in the kernel of $\Delta_\S + 2dI$, and thus we obtain no invertibility. Therefore, we also define the subspaces
\begin{equation}\label{eq:Vk}
V_k = \left\{ h\in C^{k,\gamma}_{\rm ax}(\S^{d-1})\, \Big\vert \,\int_{\S^{d-1}} h v_2 \, dS = 0\right\}
\end{equation}
of axially symmetric functions orthogonal to $v_2$.
It turns out that invertibility for $\Delta_\S + 2dI$ can now be obtained. 
\begin{proposition}\label{prop:invertible}
The operator $\Delta_\S + 2dI:V_2 \to V_0$ is bounded, linear and invertible with bounded inverse. 
\end{proposition}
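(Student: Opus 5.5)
The plan is to combine Schauder theory on the compact manifold $\S^{d-1}$ with the spectral decomposition of $\Delta_\S$ into spherical harmonics, using axial symmetry to make the relevant eigenspace one-dimensional.

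\emph{Step 1: well-definedness.} Boundedness and linearity of $\Delta_\S + 2dI : C^{2,\gamma}\to C^{0,\gamma}$ are immediate. We must check that the operator maps $V_2$ into $V_0$.

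Axial symmetry is preserved because $\Delta_\S$ commutes with rotations: if $h(R\omega)=h(\omega)$ for all $R\in O(d)$ with $Re_d=e_d$, then the same holds for $\Delta_\S h$.

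Orthogonality to $v_2$ is preserved by self-adjointness. Integrating by parts twice on the sphere and using $(\Delta_\S+2d)v_2=0$ from \eqref{eq:p=2-eq}, we get
\[
\int_{\S^{d-1}} (\Delta_\S h+2dh)v_2\,dS=\int_{\S^{d-1}} h(\Delta_\S v_2+2dv_2)\,dS=0 .
\]

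\emph{Step 2: injectivity.} Suppose $h\in V_2$ satisfies $\Delta_\S h+2dh=0$. Then $h$ lies in the eigenspace of $-\Delta_\S$ for the eigenvalue $k(k+d-2)$ with $k=2$. This eigenspace consists of the degree $2$ spherical harmonics, i.e.\ restrictions of harmonic homogeneous quadratics $x\cdot Ax$ with $\tr A=0$.

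Axial symmetry forces $A$ to commute with every $R\in O(d)$ fixing $e_d$. Such an $A$ has the form $a\,e_d\otimes e_d+b\,(I-e_d\otimes e_d)$. The trace-free condition then gives $h=c\,(\omega_d^2-1/d)=c\,v_2$. Orthogonality to $v_2$ forces $c=0$.

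To make this rigorous without invoking too much representation theory, I would alternatively reduce to the Gegenbauer ODE in $t=\omega_d$,
\[
(1-t^2)f''-(d-1)tf'+2d f=0,
\]
and note that its only solution bounded at $t=\pm1$ is, up to scaling, $t^2-1/d$. The second solution is logarithmically singular at the endpoints, which is incompatible with $h\in C^2(\S^{d-1})$.

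\emph{Step 3: surjectivity with bounded inverse.} This is the main step. Let $f\in V_0$. Since $\Delta_\S+2dI$ is self-adjoint elliptic on a compact manifold, Fredholm theory (the Fredholm alternative in $L^2$, or $H^1$ via Lax--Milgram applied to $-\Delta_\S - 2d + \mu$ followed by the Riesz--Schauder argument) solves $(\Delta_\S+2d)h=f$ whenever $f$ is $L^2$-orthogonal to the kernel. The kernel is the full degree $2$ eigenspace $E_2$.

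Here is the main obstacle: $f$ is only assumed orthogonal to $v_2$, not to all of $E_2$. However, $f$ is axially symmetric, so its $L^2$-projection onto $E_2$ is also axially symmetric, because the projection commutes with rotations. By the classification in Step 2, that projection is a multiple of $v_2$, and hence zero. So a weak solution $h\in H^1$ exists.

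Replacing $h$ by its average over the stabilizer group of $e_d$, and subtracting its component along $v_2$, we obtain a solution that is axially symmetric and orthogonal to $v_2$. Schauder estimates on $\S^{d-1}$ then give $h\in C^{2,\gamma}$ with
\[
\|h\|_{C^{2,\gamma}}\le C(\|f\|_{C^{0,\gamma}}+\|h\|_{L^2}),
\]
and the spectral gap on $E_2^\perp$ gives $\|h\|_{L^2}\le C\|f\|_{L^2}$.

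Thus the operator is a bijection of $V_2$ onto $V_0$ with bounded inverse. Alternatively, once bijectivity is known, boundedness of the inverse follows from the open mapping theorem, since $V_2$ and $V_0$ are closed subspaces of Banach spaces.
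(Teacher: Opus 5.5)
Your proof is correct and follows essentially the paper's route. Rotation invariance and self-adjointness give $(\Delta_\S+2dI)V_2\subset V_0$, the axially symmetric part of the degree-$2$ eigenspace is $\spn\{v_2\}$, and surjectivity comes from the Fredholm alternative with Schauder regularity after subtracting the $v_2$-component, with the inverse bounded by the open mapping theorem or your direct estimate.

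Your one real refinement is to run the Fredholm alternative on the full space. You then use the rotation-equivariant projection onto $E_2$ and averaging over the stabilizer of $e_d$ to reduce the solvability condition to orthogonality with $v_2$, which spells out the paper's brief appeal to the Fredholm alternative within the axially symmetric class.

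There is one small slip in your optional Gegenbauer aside: the second solution is logarithmic only for $d=3$. For $d\geq4$ it blows up like $(1-t)^{(3-d)/2}$, which still rules it out.
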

\begin{proof}
Let $v\in V_2$. Since the spherical Laplacian is rotationally invariant, the axial symmetry of $v$ implies that $\Delta_\S v$ is also axially symmetric.
Thus $\Delta_\S V_2 \subset C^{0,\gamma}_{\rm ax}(\S^{d-1})$. Now, since by \eqref{eq:p=2-eq} it holds that $\Delta_\S v_2 = -2dv_2$, we have 
\[
\int_{\S^{d-1}}v_2 \Delta_\S h \, dS =\int_{\S^{d-1}}h \Delta_\S v_2\, dS = -2d\int_{\S^{d-1}}h v_2\, dS = 0
\]
for any $h\in V_2$, and so 
$\Delta_\S V_2 \subset V_0$.
Now, we show that
\begin{equation}\label{eq:kernel}
\ker (\Delta_\S + 2dI) \cap C^{2,\gamma}_{\rm ax}(\S^{d-1})  = \spn\{v_2\}.
\end{equation}
To see this, note that since $v\in C^{2,\gamma}_{\rm ax}(\S^{d-1})$  and $-\Delta_\S v = 2dv$, it is well known from the eigenfunction theory, and can also be seen from the Taylor expansion, that $v$ is a spherical harmonic of degree $2$ (i.e., $v$ is the restriction of a homogeneous harmonic quadratic polynomial to the sphere). Since $v$ is also axially symmetric, it depends only on $\omega_d$, this implies that $v(\omega) = a + b \omega_d^2$, for some $a,b\in \R$, which corresponds to the lifted quadratic polynomial $u(x) = a|x|^2 + b x_d^2$.  Since $u$ is harmonic, we have $2ad + 2b = 0$, or $a = -b/d$, and so 
\[v(\omega) = b(\omega_d^2 - \tfrac1 d) = b v_2(\omega).\]
Recalling \eqref{eq:Vk}, it follows from this that $\ker (\Delta_\S + 2dI) \cap V_2 = \{0\}$. 

The invertibility and boundedness of the inverse now follow from the Fredholm alternative and standard Schauder theory. Indeed, from what we showed for the kernel, injectivity is immediate. As for surjectivity of $\Delta_S+2dI:V_2\to V_0$, we first apply Fredholm alternative in the full axially symmetric space: it says (applicability follows from Schauder theory) that $(\Delta_\S+2dI)u=f$ is solvable whenever the right hand side is orthogonal to the kernel. By \eqref{eq:kernel}, the kernel is spanned by $v_2$, so that orthogonal to the kernel means $f\in V_0$. It follows that for $f\in V_0$, a solution $u\in 
C^{2,\gamma}_{\rm ax}(\S^{d-1})
$ exists. Moreover, since $(\Delta_\S+2dI)v_2=0$ by considering $h=u-cv_2$ for which $(\Delta_\S+2dI)h=f$, we get for a suitable $c$ that
\[
\int_{\S^{d-1}}hv_2\,dS
=
\int_{\S^{d-1}}uv_2\,dS
-
c\int_{\S^{d-1}}v_2^2\,dS
=0.
\]
We have shown that we may choose the solution orthogonal to $v_2$ i.e., we have shown the surjectivity of  $\Delta_S+2dI:V_2\to V_0$. Thus we have obtained bijectivity, and hence invertibility. The Bounded Inverse Theorem then gives the boundedness of the inverse for the bounded linear bijection between Banach spaces.
\end{proof}
We are now able to establish the existence of a function $u_p=r^{\lambda(p)}v_p$ satisfying $\Delta_p^Nu_p(x)=0$ for $x\neq 0$ for every $p$ sufficiently close to $2$. In the next lemma, we denote the function $g:(2-\delta,2+\delta) \to  V_2$ by $g_p$ (here $p$ is a variable, not a derivative) to be consistent with the earlier notation like $u_p(x)$. We also record an integral needed later, since it naturally follows from the computation in the proof.
\begin{lemma}\label{lem:ift}
There exists $\delta>0$ and smooth mappings 
\begin{equation}\label{eq:iftmappings}
\lambda:(2-\delta,2+\delta) \to \R \ \ \text{and} \ \ g:(2-\delta,2+\delta) \to  V_2
\end{equation}
such that
\begin{equation}\label{eq:ift}
\Phi(p,\lambda(p),g_p) = 0
\end{equation}
for all $p\in(2-\delta,2+\delta)$. Moreover, $\lambda(2)=2$, $g_2=0$,
\begin{equation}\label{eq:lambdaprime2}
    -1<\lambda'(2)<0
\end{equation}
and
\begin{align}\label{eq:gprime2}
\dashint_{\S^{d-1}}\left.\frac{\partial g_p}{\partial p}\right|_{p=2} \, dS <-\frac{d-2}{2d^2}.
\end{align}
\end{lemma}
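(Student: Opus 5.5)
The plan is to apply the Banach-space Implicit Function Theorem to $\Phi$ restricted to axially symmetric functions, and then to differentiate the identity \eqref{eq:ift} at $p=2$ to identify $\lambda'(2)$ and $\partial_p g_p|_{p=2}$ through two explicit integrals over $\S^{d-1}$.

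\emph{Setting up the IFT.} I regard $\Phi$ as a map
\[
\Phi:\R\times\R\times V_2\to C^{0,\gamma}_{\rm ax}(\S^{d-1}),
\]
defined near $(2,2,0)$. Axial symmetry is preserved because $\Delta_p^N$ commutes with rotations. If $R$ fixes $e_d$ and $v$ is axially symmetric, then $r^\lambda v$ is invariant under $R$, hence so is $\Delta_p^N(r^\lambda v)$, and therefore so is $\L_p^\lambda v$. Smoothness of $\Phi$ was noted above, using the positivity \eqref{eq:positivity}.

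By Proposition~\ref{prop:derivative}, the partial derivative is $(\xi,h)\mapsto(\Delta_\S+2dI)h+(d+2)\xi v_2$. Since $C^{0,\gamma}_{\rm ax}=V_0\oplus\spn\{v_2\}$ (orthogonal splitting in $L^2$), and $\Delta_\S+2dI:V_2\to V_0$ is an isomorphism by Proposition~\ref{prop:invertible}, this derivative is a bijection $\R\times V_2\to C^{0,\gamma}_{\rm ax}$. The $\xi$-component is read off by pairing with $v_2$. The IFT then gives $\delta$ and smooth $\lambda(p)$, $g_p$ satisfying \eqref{eq:ift}, with $\lambda(2)=2$ and $g_2=0$.

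\emph{Differentiating at $p=2$.} The coefficient $\lambda((p-1)\lambda+d-p)$ has $p$-derivative $\lambda(\lambda-1)=2$ at $\lambda=2$. Using \eqref{eq:Phi}, this gives
\[
\partial_p\Phi(2,2,0)=2v_2+\A_2v_2=\L_\infty^2 v_2=\Delta_\infty^N u_2\big|_{\S^{d-1}}=:w.
\]
Differentiating \eqref{eq:ift} at $p=2$ therefore yields
\[
w+(d+2)\lambda'(2)v_2+(\Delta_\S+2dI)g'=0, \qquad g':=\partial_pg_p|_{p=2}.
\]
I extract the two quantities as follows.
\begin{itemize}
\item Pairing with $v_2$ kills the $g'$ term, by self-adjointness and $(\Delta_\S+2dI)v_2=0$. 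This gives $\lambda'(2)=-\int wv_2\,dS\big/\bigl((d+2)\int v_2^2\,dS\bigr)$.
\item Integrating over the sphere uses $\int\Delta_\S g'=0$ and $\int v_2=0$. This gives $\dashint g'=-\frac{1}{2d}\dashint w$.
\end{itemize}
So \eqref{eq:lambdaprime2} is equivalent to $0<\int wv_2<(d+2)\int v_2^2$, and \eqref{eq:gprime2} is equivalent to $\dashint w>\frac{d-2}{d}$.

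\emph{Computing $w$.} With $a=\omega_d$ we have
\[
\nabla u_2=2ae_d-\tfrac2d\omega,\qquad D^2u_2=2e_d\otimes e_d-\tfrac2dI,\qquad \nabla u_2\cdot e_d=2a(1-\tfrac1d),
\]
and $|\nabla u_2|^2=4\bigl((1-\tfrac2d)a^2+\tfrac1{d^2}\bigr)$. This gives
\[
w(a)=\frac{2(d-1)^2a^2}{d(d-2)a^2+1}-\frac2d,
\]
a function of $a\in[-1,1]$ alone.

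The remaining and main step is to verify the integral inequalities against the density $c_d(1-a^2)^{(d-3)/2}$ of $\omega_d$. For $v_2=a^2-\frac1d$, the quantities $\int v_2^2$ and $\dashint a^2=\frac1d$, $\dashint a^4=\frac{3}{d(d+2)}$ are explicit.

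For the bound on $\dashint w$, I would try to avoid Gaussian-hypergeometric closed forms. Writing $w=\frac{2(d-1)^2}{d(d-2)}\bigl(1-\frac{1}{d(d-2)a^2+1}\bigr)-\frac2d$, the inequality amounts to an upper bound on $\dashint\frac{1}{d(d-2)a^2+1}$. I would try to obtain this by bounding the mass near $a=0$ directly from the density, or via an elementary rational upper bound of the integrand in $a^2$ integrated with the known moments.

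For $\int wv_2>0$, I would note that $w$ is increasing in $a^2$ while $v_2$ has zero mean and is also increasing in $a^2$; Chebyshev's association inequality then gives positivity. The upper bound $\int wv_2<(d+2)\int v_2^2$ should follow from $w\le 2(1-\tfrac1d)\cdot\frac{d-1}{d-2}$-type pointwise bounds, or again via rational bounds in $a^2$. I expect these sharp-ish one-dimensional estimates, uniform in $d\ge3$, to be the main obstacle.
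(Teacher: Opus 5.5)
\emph{Verdict: there is a genuine gap.} Your Implicit Function Theorem setup, the $p$-derivative computation and the two extraction steps are exactly the paper's: pairing with $v_2$, and integrating over $\S^{d-1}$ (all averages below are over $\S^{d-1}$). Your reductions are also correct. With $w=\L_\infty^2v_2=\phi(\omega_d^2)$, \eqref{eq:lambdaprime2} is equivalent to $0<\dashint wv_2\,dS<\frac{2(d-1)}{d^2}$, and \eqref{eq:gprime2} is equivalent to $\dashint w\,dS>\frac{d-2}{d}$.

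Your Chebyshev argument for $\dashint wv_2\,dS>0$ is valid. It is the same mechanism as the paper's integration by parts with \eqref{aux-eq}, which gives $\dashint wv_2\,dS=\frac{2\sigma_d}{d}\int_0^1\phi'(t)\,t^{1/2}(1-t)^{(d-1)/2}\,dt$ with $\phi'(t)=2(d-1)^2/(d(d-2)t+1)^2>0$.

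The gap is that the other two inequalities are left unproved, and they are the real content of the lemma beyond a routine use of the Implicit Function Theorem. For $\lambda'(2)>-1$, a one-sided pointwise bound on $w$ cannot bound $\dashint wv_2\,dS$ from above, because $v_2$ changes sign. You would need the oscillation of $w$: monotonicity and $\phi(1)-\phi(0)=2$ give $\dashint wv_2\,dS\le\dashint|v_2|\,dS$, which \eqref{aux-eq} evaluates exactly, but you would still have to bound $\sigma_d$ to compare with $\frac{2(d-1)}{d^2}$. The paper instead inserts $\phi'$ into the formula above and uses the identity $(d(d-2)t+1)^2-4(d-1)^2t(1-t)=\bigl(1-(d(d-2)+2)t\bigr)^2\ge0$. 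This gives $\dashint wv_2\,dS\le\frac1d<\frac{2(d-1)}{d^2}$ for $d\ge3$; see \eqref{eq:intLinfty-2}.

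For \eqref{eq:gprime2} the missing idea is more serious. Your reformulation requires
\[
\dashint_{\S^{d-1}}\frac{dS(\omega)}{d(d-2)\omega_d^2+1}<\frac12+\frac1{2(d-1)^2}.
\]
For $d=3$ the left side is $\frac{\pi}{3\sqrt3}\approx0.605$ against $0.625$, so coarse devices fail there. For instance, a single split at $|\omega_d|=\eta$ gives at best about $0.785$. Your suggestions (mass near $\omega_d=0$, rational majorants against moments) are not developed into an estimate valid uniformly for $d\ge3$.

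The paper avoids estimating this integral at all, as follows.
\begin{enumerate}
\item Since $\dashint v_2\,dS=0$, one has $\dashint w\,dS=\dashint\A_2v_2\,dS$.
\item Apply Cauchy--Schwarz to the factorization $\sqrt{2d(d-2)}\,|\omega_d|\sqrt{1-\omega_d^2}=(\A_2v_2)^{1/2}(d(d-2)\omega_d^2+1)^{1/2}$.
\item Use $\dashint(d(d-2)\omega_d^2+1)\,dS=d-1$ and $\dashint|\omega_d|\sqrt{1-\omega_d^2}\,dS=\frac{2\sigma_d}{d}$ to obtain $\dashint w\,dS>\frac{8\sigma_d^2(d-2)}{d(d-1)}$.
\item It then suffices that $8\sigma_d^2\ge d-1$. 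This follows by induction from $\sigma_3=\frac12$, $\sigma_4=\frac2\pi$ and \eqref{eq:sigma-recurrence}.
\end{enumerate}
Note that $8\sigma_3^2=2=d-1$, so at $d=3$ only the strictness of Cauchy--Schwarz closes the argument. This shows how little room there is for the cruder estimates you propose.
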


\begin{proof}
The proof uses the Implicit Function Theorem with $\Phi(p,\lambda,g)$ restricted to the domain
\[
\Phi:\R\times \R \times V_2 \to C^{0,\gamma}_{\rm ax}(\S^{d-1}).
\]
Since the normalized $p$-Laplacian is invariant under orthogonal transformations, $\Phi(p,\lambda,g)= \L_p^\lambda(v_2+g)$ is axially symmetric whenever $g$ is, and so the range of $\Phi$ indeed belongs to $C^{0,\gamma}_{\rm ax}(\S^{d-1})$. Moreover, this restriction remains smooth in the Fr\'echet sense.

By Proposition~\ref{prop:derivative}, the Fr\'echet derivative with respect to $(\lambda,g)$ at $(2,2,0)$ of this restricted mapping is the restriction
\[
T:=\left.D_{(\lambda,g)}\Phi(2,2,0)\right|_{\R\times V_2}:
\R\times V_2\to C^{0,\gamma}_{\rm ax}(\S^{d-1}),
\]
where
\[
T(\xi,h)
=
(\Delta_\S+2dI)h+(d+2)\xi v_2
\]
for every $(\xi,h)\in\R\times V_2$. To apply the Implicit Function Theorem, it remains to show that $T$ is invertible with bounded inverse.

To see injectivity, suppose that $T(\xi,h)=0$. Since $(\Delta_\S+2dI)h\in V_0$, multiplying the equation by $v_2$ and integrating over $\S^{d-1}$ gives
\begin{equation*}
    0
    =
    \int_{\S^{d-1}}T(\xi,h)v_2\,dS
    =
    (d+2)\xi\int_{\S^{d-1}}v_2^2\,dS,
\end{equation*}
where we have used the fact that $h\in V_2$ and $\Delta_\S h\in V_0$, which in turn implies that $\xi=0$. Proposition~\ref{prop:invertible} then gives $h=0$.

Thus it remains to show that $T$ is surjective. Given $\tau\in C^{0,\gamma}_{\rm ax}(\S^{d-1})$ we need to find $(\xi,h)\in \R\times V_2$ such that $T(\xi,h)=\tau$, that is
\[
(\Delta_\S + 2dI)h + (d+2) \xi v_2 = \tau.
\]
Multiplying both sides by $v_2$, integrating over $\S^{d-1}$, using that $\Delta_\S h \in V_0$, as shown in the proof of Proposition~\ref{prop:invertible}, and rearranging yields the formula
\[
\xi = \frac{\displaystyle\int_{\S^{d-1}}\tau v_2 \, dS}
{\displaystyle(d+2)\int_{\S^{d-1}}v_2^2 \, dS}.
\]
We set $\tau_0 = \tau - (d+2)\xi v_2$ and observe that by definition of $\xi$ it follows that $\tau_0 \in V_0$. 
As shown in Proposition~\ref{prop:invertible} for $\tau_0\in V_0$, there is a unique $h\in V_2$
such that
\[
(\Delta_\S  + 2dI)h = \tau_0,
\]
and thus we have found $(\xi,h)$ satisfying $T(\xi,h)=\tau$, proving surjectivity. Also observe that $\tau \mapsto \xi$ is bounded, and so is $(\Delta_\S+2dI)^{-1}:V_0\to V_2$ by the previous proposition. Alternatively, the boundedness of the inverse immediately follows from the Bounded Inverse Theorem. The existence of $\lambda(p)$ and $g_p$ now follows from an application of the Implicit Function Theorem on Banach spaces. 

Since $\Phi(2,2,0)=0$, it is clear that $\lambda(2)=2$ and $g_2=0$. To estimate $\lambda'$ and the derivative of $g$, we differentiate \eqref{eq:ift} with respect to $p$ and use the chain rule to obtain
\begin{align*}
0
&= \left.\frac{\partial}{\partial p}\right|_{p=2}\Phi(p,\lambda(p),g_p) \notag\\
&= \left.\frac{\partial}{\partial p}\right|_{p=2}\Big[\Delta_\S(v_2+g_p)
+\lambda(p)((p-1)\lambda(p)+d-p)(v_2+g_p)
+(p-2)\A_{\lambda(p)}(v_2+g_p)\Big] \notag\\
&= (\Delta_\S+2dI)\left.\frac{\partial g_p}{\partial p}\right|_{p=2}
+(d+2)v_2\lambda'(2)+2v_2+\A_2v_2.
\end{align*}
Observe that the last two terms on the right hand side are equal to $\L_\infty^2v_2$ by \eqref{eq:infLapS}. Therefore we have 
\begin{align}\label{eq:varp}
(\Delta_\S+2dI)\left.\frac{\partial g_p}{\partial p}\right|_{p=2}+(d+2)v_2\lambda'(2)+\L_\infty^2v_2
=0
\end{align}
Multiplying both sides of \eqref{eq:varp} by $v_2$, integrating over $\S^{d-1}$ and using $(\Delta_\S  + 2dI) \left.\frac{\partial g_p}{\partial p}\right|_{p=2}\in V_0$, we obtain
\[
(d+2)\lambda'(2) \int_{\S^{d-1}} v_2^2 \,dS
+ \int_{\S^{d-1}}v_2\L_\infty^2v_2\,dS=0.
\]
Using \eqref{eq:intLinfty-1} and \eqref{eq:intLinfty-2} which compute these integrals explicitly, we immediately get that $\lambda'(2)<0$, while for the lower bound rearranging terms we obtain
\begin{equation*}
    \lambda'(2)
    =
    -\frac{1}{d+2}\,\frac{\displaystyle\dashint_{\S^{d-1}}v_2\L_\infty^2v_2\,dS}{\displaystyle\dashint_{\S^{d-1}}v_2^2\,dS}
    \geq
    -\frac{d}{2(d-1)},
\end{equation*}
so \eqref{eq:lambdaprime2} follows.

For $\left.\frac{\partial g_p}{\partial p}\right|_{p=2}$, we note that  $\S^{d-1}$ is a closed Riemannian manifold, and thus the Divergence Theorem gives \begin{equation*}
    \int_{\S^{d-1}}\Delta_\S f\,dS=
-\int_{\S^{d-1}}\nabla_\S f\cdot \nabla_\S1\,dS
=0,
\end{equation*}
for every $f\in C^2(\S^{d-1})$. In particular, $\Delta_\S\left.\frac{\partial g_p}{\partial p}\right|_{p=2}$ in \eqref{eq:varp} has mean zero over $\S^{d-1}$.
Integrating \eqref{eq:varp} over $\S^{d-1}$ and using the previous observation as well as the fact that also $v_2$  has mean zero over $\S^{d-1}$,  we obtain 
\[
2d\,\int_{\S^{d-1}}\left.\frac{\partial g_p}{\partial p}\right|_{p=2} \, dS + \int_{\S^{d-1}}\L_\infty^2v_2 \, dS = 0.\]
Rearranging terms and using the estimate for the second integral from \eqref{eq:intLinfty-3} we finally get
\begin{align}\label{eq:intgprime2}
\dashint_{\S^{d-1}}\left.\frac{\partial g_p}{\partial p}\right|_{p=2} \, dS =-\frac{1}{2d}\dashint_{\S^{d-1}}\L_\infty^2v_2 \, dS
<
-\frac{d-2}{2d^2}
\end{align}
and the proof is finished.
\end{proof}


\subsection{Proof of Theorem \ref{thm:main}}

In the next theorem, we state that Lemma~\ref{lem:ift} gives us a $p$-harmonic function by setting
\begin{equation}\label{eq:upvp}
v_p = v_2 + g_p \ \ \text{and} \ \ u_p(x) = r^{\lambda(p)}v_p(\omega).
\end{equation}
The origin needs a separate argument.  

\begin{theorem}\label{thm:existence}
 For every $0 < \gamma < 1$  there exists $\delta>0$ such that whenever $|p-2|< \delta$, $u_p \in C^\infty(\R^d\setminus\{0\})\cap C_{loc}^{1,\gamma}(\R^d)$ is $p$-harmonic on $\R^d$. 
\end{theorem}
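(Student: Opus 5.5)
The plan is to treat $\R^d\setminus\{0\}$ and the origin separately, and to fix $\gamma$ first so that Lemma~\ref{lem:ift} applies with this $\gamma$.

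\textbf{Away from the origin.} By Lemma~\ref{lem:ift}, $v_p=v_2+g_p\in C^{2,\gamma}_{\rm ax}(\S^{d-1})$ and $\Phi(p,\lambda(p),g_p)=\L_p^{\lambda(p)}v_p=0$. The first step is to check that $\nabla u_p\neq 0$ on $\R^d\setminus\{0\}$. By homogeneity, $|\nabla u_p|^2=r^{2\lambda-2}(|\nabla_\S v_p|^2+\lambda^2v_p^2)$. By \eqref{eq:positivity} this quantity is at least $4/d^2$ when $p=2$. Since $(\lambda(p),g_p)\to(2,0)$ in $\R\times C^{2,\gamma}$, after shrinking $\delta$ it stays bounded below by, say, $2/d^2$.

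Hence on $\R^d\setminus\{0\}$ the function $u_p$ is $C^{2,\gamma}_{loc}$ and nondegenerate, and \eqref{eq:pLap-Lplambda} gives $\Delta_p^Nu_p=r^{\lambda-2}\L_p^\lambda v_p=0$ classically. Multiplying by $|\nabla u_p|^{p-2}$ then gives $\div(|\nabla u_p|^{p-2}\nabla u_p)=0$ pointwise. Where $\nabla u\ne0$, the equation is uniformly elliptic with coefficients that are smooth functions of $\nabla u$, so bootstrapping with Schauder estimates yields $u_p\in C^\infty(\R^d\setminus\{0\})$.

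\textbf{Regularity at the origin.} Lemma~\ref{lem:ift} gives $\lambda(p)\to2$, so for small $\delta$ we have $\lambda(p)>1+\gamma$. Then $u_p=r^\lambda v_p$ with $v_p\in C^{2,\gamma}$ satisfies:
\begin{itemize}
\item $|u_p|\lesssim r^\lambda$;
\item $|\nabla u_p|\lesssim r^{\lambda-1}\to0$, so $u_p\in C^1$ with $\nabla u_p(0)=0$;
\item $|D^2u_p|\lesssim r^{\lambda-2}$.
\end{itemize}
The Hölder seminorm of $\nabla u_p$ is obtained by the standard scaling argument. If $|x-y|\ge \tfrac12\max(|x|,|y|)$, use $|\nabla u_p(x)-\nabla u_p(y)|\lesssim \max(|x|,|y|)^{\lambda-1}\lesssim|x-y|^{\lambda-1}$. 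Otherwise integrate the Hessian bound along the segment $[x,y]$, which stays at distance comparable to $|x|$ from $0$. Both cases give $|x-y|^{\gamma}$ locally, since $\lambda-1>\gamma$. This yields $u_p\in C^{1,\gamma}_{loc}(\R^d)$.

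\textbf{$p$-harmonicity across the origin.} This is the main step: the origin is a single point where the equation has not been verified. I would show that $u_p$ is a weak solution on all of $\R^d$. Since $|\nabla u_p|^{p-1}\lesssim r^{(\lambda-1)(p-1)}$ is bounded near $0$, take $\phi\in C_c^\infty(\R^d)$ and a cutoff $\eta_\eps$ vanishing on $B_\eps$, equal to $1$ outside $B_{2\eps}$, with $|\nabla\eta_\eps|\lesssim 1/\eps$. Testing the pointwise equation on $\R^d\setminus\{0\}$ with $\eta_\eps\phi$ gives
\[
\int|\nabla u_p|^{p-2}\nabla u_p\cdot\nabla(\eta_\eps\phi)\,dx=0 .
\]
The error term involving $\nabla\eta_\eps$ is $O(\eps^{-1}\eps^{d})\to0$ since $d\ge2$, so the weak equation holds on $\R^d$. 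Equivalently, the flux through $\partial B_\eps$ vanishes as $\eps\to0$; removability of a point for bounded-gradient solutions is standard. Finally, weak solutions are viscosity solutions of $\Delta_p^N u=0$ by the equivalence cited in the introduction \cite{juutinen2001equivalence,kawohl2012solutions}, so $u_p$ is $p$-harmonic on $\R^d$.

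Throughout, $\delta$ is shrunk finitely many times: for the implicit function theorem, for gradient nondegeneracy, and for $\lambda>1+\gamma$.
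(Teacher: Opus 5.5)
Your proof is correct. Away from the origin it follows the paper: nondegeneracy of $\nabla u_p$ from $C^{2,\gamma}$-closeness to $v_2$, the classical equation via \eqref{eq:pLap-Lplambda}, Schauder bootstrapping, and $\lambda(p)>1+\gamma$ for the $C^{1,\gamma}_{loc}$ bound. Your H\"older estimate is more detailed than the paper's one-line claim.

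At the origin you take a genuinely different route. The paper stays in the viscosity framework and splits into cases according to $\lambda(p)\gtrless 2$, which requires $\lambda'(2)\neq0$.
\begin{itemize}
\item For $\lambda(p)>2$, $u_p$ is $C^2$ with $D^2u_p(0)=0$, and the viscosity inequalities are checked at a point where every test function has vanishing gradient.
\item For $1<\lambda(p)<2$, the sign change of $v_p$ means no $C^2$ function can touch $u_p$ at $0$ from above or below, so the conditions hold vacuously.
\end{itemize}
You instead show that $u_p$ is a weak solution on all of $\R^d$ by a cutoff (removable point) argument, and then invoke the weak/viscosity equivalence. This argument only needs the flux $|\nabla u_p|^{p-1}$ to be $o(|x|^{1-d})$ near $0$.

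Your approach treats $p<2$ and $p>2$ uniformly. It needs no information about $\lambda'(2)$, does not use the sign change of $v_p$, and avoids the zero-gradient form of the viscosity definition that the paper's $\lambda(p)>2$ case quietly relies on.

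The paper's approach never passes to the divergence form. Its ``no test function touches'' observation is also exactly what is reused in Propositions \ref{prop:profile} and \ref{prop:liftprofile}. Your cutoff argument would work there as well: take $\eta_\eps$ depending only on $y$, since the singular set $\{y=0\}$ has codimension $m\geq 3$.
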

\begin{proof}
By Proposition \ref{prop:sphericalp} and Lemma \ref{lem:ift},  $u_p\in C^{2,\gamma}_{loc}(\R^d\setminus \{0\})\cap C^{1,\gamma}_{loc}(\R^d)$ satisfies
$$
\Delta_p^N u_p(x) = 0\text{ for }x\neq 0.
$$ 
Above to obtain  $u_p\in  C^{1,\gamma}_{loc}(\R^d)$ we restrict $\delta>0$ further if necessary to have $\lambda(p) > 1+\gamma$. Also below $\delta>0$ is taken small enough. 

 Since $\lambda'(2)<0$ by \eqref{eq:lambdaprime2}, we have  
\begin{align}
\label{eq:lambdas}
    \lambda(p)>2 \text{ when }p<2 \text{ and }    \lambda(p)<2 \text{ when }p>2.
\end{align}
Moreover, $v_p(\omega)$ takes on both positive and negative values on $\S^{d-1}$ for $|p-2|<\delta$ since $v_2$ does and by the Implicit Function Theorem, the convergence is in $C^{2,\gamma}$ and thus uniform.  

Next we observe that $u_p$ is $p$-harmonic in the viscosity sense at $x=0$ as well. Indeed, if $\lambda(p) > 2$, then this is a consequence of the Hessian of $u_p$ vanishing continuously at the origin. If $1 < \lambda(p) < 2$, the profile $u_p(x) = r^{\lambda(p)}v_p(\omega)$ with $v_p$ changing sign implies that there are no admissible smooth test functions at the origin. Indeed, $v_p$ takes both positive and negative values and any touching $C^2$ test function is $O(r^2)$, while
$r^{\lambda(p)}\gg r^2$. As there are no test functions in this case, the viscosity condition, see \cite{juutinen2001equivalence}, holds trivially. 

Finally, by the discussion following \eqref{eq:positivity}, 
\[
|\nabla u_p(x)|^2
=
r^{2\lambda(p)-2}
\left(
|\nabla_\S v_p(\omega)|^2+\lambda(p)^2v_p(\omega)^2
\right)>0
\qquad\text{for }x\neq0.
\]
Hence the origin is the only critical point of $u_p$, and the standard elliptic
regularity gives $u_p\in C^\infty(\R^d\setminus\{0\})$.
\end{proof}

We are now equipped to give the proof of Theorem \ref{thm:main}.
\begin{proof}[Proof of Theorem \ref{thm:main}]
We use $u_p(x) = r^{\lambda(p)}v_p(\omega)$ from Theorem~\ref{thm:existence}, and assume $2<p<2+\delta$ where $\delta>0$ is small enough.  Now, notice that in the expression
\[\M_\eps u_p(0) = \frac{\alpha}{2}\left( \max_{y\in B_\eps(0)}\left\{|y|^{\lambda(p)}v_p\left( \tfrac{y}{|y|}\right)\right\} + \min_{y \in B_\eps(0)}\left\{|y|^{\lambda(p)}v_p\left( \tfrac{y}{|y|}\right)\right\}\right) + \beta \dashint_{B_\eps(0)}|y|^{\lambda(p)}v_p\left( \tfrac{y}{|y|}\right) \, dy,\]
the maximum and minimum over $B_\eps(0)$ are clearly attained at $|y|=\eps$ because $v_p$ takes both positive and negative values as observed in the proof of the previous theorem and $\lambda(p)>0$. Using this and a change of variables in the integral yields 
\begin{align*}
\M_\eps u_p(0) 
&= \frac{\alpha \eps^{\lambda(p)}}{2}\left( \max_{\omega\in \S^{d-1}}v_p(\omega) + \min_{\omega\in \S^{d-1}}v_p(\omega)\right) + \frac{\beta }{\eps^d|B_1|} \int_0^\eps r^{\lambda(p)+d-1}\int_{\S^{d-1}}v_p(\omega) \, dS(\omega) \, dr. 
\end{align*}
Recalling 
\[\alpha = \frac{p-2}{p+d} \ \ \text{and} \ \  \beta = \frac{2+d}{p+d}\]
we can write this as
\begin{equation}\label{eq:Mepschi}
\M_\eps u_p(0) =  \frac{\C(p)\eps^{\lambda(p)}}{p+d},
\end{equation}
where
\begin{equation}\label{eq:Cp}
\C(p) := \frac{p-2}{2}\left( \max_{\omega\in \S^{d-1}}v_p(\omega) + \min_{\omega\in \S^{d-1}}v_p(\omega)\right) + \frac{d(d+2)}{\lambda(p) + d}\, \dashint_{\S^{d-1}}v_p(\omega) \, dS(\omega).
\end{equation}
 On the other hand,
\begin{equation*}
\C(2) := d\, \dashint_{\S^{d-1}}v_2\, dS=0.
\end{equation*}
We claim that $\C'(2) < 0$, which implies that $\C(p) < 0$ for $2 < p < 2+\delta$, and hence 
\[
\lim_{\eps\to 0}\eps^{-2}\M_\eps u_p(0) =\lim_{\eps\to 0}\frac{\C(p)}{p+d}\eps^{\lambda(p)-2}= -\infty,
\]
since $1 < \lambda(p) < 2$ for all $2 < p < 2+\delta$ by \eqref{eq:lambdas}.

To show $\C'(2) < 0$, recalling  $v_p=v_2+g_p$ and that the first term in the definition of $\C$ is multiplied by $p-2$, we compute 
\begin{equation*}
\C'(2) = \frac{1}{2}\left( \max_{\omega\in \S^{d-1}}v_2(\omega) + \min_{\omega\in \S^{d-1}}v_2(\omega)\right) + d\dashint_{\S^{d-1}}\left.\frac{\partial g_p}{\partial p}\right|_{p=2} \, dS.
\end{equation*}
Using the explicit formula for $v_2$, the first term can be explicitly computed, indeed
\[\frac{1}{2}\left( \max_{\omega\in \S^{d-1}}v_2(\omega) + \min_{\omega\in \S^{d-1}}v_2(\omega)\right) = \frac{1}{2}\left( 1-\frac{1}{d}  + \frac{-1}{d}\right) = \frac{d-2}{2d}.\]
Therefore, the negativity of $\C'(2)$ follows from this and the inequality \eqref{eq:gprime2}.
\end{proof}

\section{Computer Assisted Proofs}
\label{sec:num}

The Implicit Function Theorem approach to establishing the counterexample in Theorem \ref{thm:main} works only for $p$ near $p=2$. In order to explore whether counterexamples hold for other values of $p>2$, we turn in this section to computer assisted proofs (CAP) for establishing existence of axially symmetric $p$-harmonic functions for $p>2$ with corresponding mean-value defect $\C(p)\neq 0$. This requires the reduction of the equation to an ordinary differential equation (ODE), and the use of certified interval ODE solvers, which are explained in the subsequent subsections.

\subsection{Reduction to an Axially Symmetric ODE}
\label{sec:odered}

The proofs in this section are based on a standard reduction of the $p$-Laplacian to an axially symmetric ODE. Indeed, this reduction is already hidden in the results in Section \ref{sec:main}, as the spherical part $v$ in \eqref{eq:sphericalpart} is axially symmetric so it is a function of the single variable $\omega_d$, allowing us to convert the $p$-Laplace equation into an ODE. In this section, it is more convenient to write $\omega_d = \cos\theta$ for $\theta \in [0,\pi]$ and let $v(\omega) = f(\theta)$. The following standard result allows us to write \eqref{eq:sphericalpart} in terms of $f$. The reduction of axially symmetric homogeneous $p$-harmonic functions to an ordinary differential equation goes back to Krol' \cite{krol73}; in dimension two, explicit homogeneous solutions and the corresponding exponents were also obtained by Aronsson \cite{aronsson86}.

\begin{proposition}\label{prop:sphericalcomp}
If $v(\omega) = f(\theta)$ where $\omega_d = \cos \theta$, then
\begin{equation}\label{eq:}
|\nabla_\S v|^2 = f'(\theta)^2, \ \ \nabla_\S v\cdot D_\S^2 v \nabla_\S v = f'(\theta)^2f''(\theta), \ \ \text{and}  \ \ \Delta_\S v = f''(\theta) + (d-2) \cot\theta \,f'(\theta).
\end{equation}
\end{proposition}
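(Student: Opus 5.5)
We use the lift $v(x)=F(x_d/|x|)$, where $\cos\theta=\omega_d$ and $f(\theta)=F(\cos\theta)$, and compute directly from the definitions \eqref{eq:sphere_grad}, \eqref{eq:sphere_hess} and \eqref{eq:sphere_lap}. Since these spherical objects do not depend on the choice of lift, it is simplest to take the $0$-homogeneous extension $v(x)=F(x_d)$ restricted to the sphere. We will work with the simpler lift $\tilde v(x)=F(x_d)$, which is valid because the definitions are lift-independent. Then $\nabla\tilde v=F'(\omega_d)e_d$ and $D^2\tilde v=F''(\omega_d)e_d\otimes e_d$.

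The first step is the gradient. Setting $\tau:=Pe_d=e_d-\omega_d\omega$, we have $|\tau|^2=1-\omega_d^2=\sin^2\theta$ and
\[
\nabla_\S v=F'(\omega_d)\,\tau .
\]
The chain rule gives $f'(\theta)=-\sin\theta\,F'(\cos\theta)$, so $|\nabla_\S v|^2=F'^2\sin^2\theta=f'(\theta)^2$.

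The second step is the Hessian. Using $Pe_d=\tau$,
\[
D^2_\S v=F''\,\tau\otimes\tau-F'\,\omega_d\,P .
\]
Since $P\tau=\tau$, the quadratic form is
\[
\tau\cdot D^2_\S v\,\tau=F''\sin^4\theta-F'\cos\theta\sin^2\theta .
\]
Multiplying by $F'^2$ gives $\nabla_\S v\cdot D^2_\S v\nabla_\S v=F'^2\sin^2\theta\,\bigl(F''\sin^2\theta-F'\cos\theta\bigr)$. Differentiating $f'=-\sin\theta\,F'$ once more yields
\[
f''=\sin^2\theta\,F''-\cos\theta\,F' .
\]
The bracket is exactly $f''$, so the product is $f'^2f''$.

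The third step is the Laplacian. Taking the trace and using $\tr P=d-1$ gives
\[
\Delta_\S v=F''\sin^2\theta-(d-1)\cos\theta\,F' .
\]
We rewrite this as $f''-(d-2)\cos\theta\,F'$. Substituting $F'=-f'/\sin\theta$ turns it into $f''+(d-2)\cot\theta\,f'$.

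All three steps are routine. The only points that need care are the sign conventions in the chain rule, where $d\omega_d/d\theta=-\sin\theta$, and the justification of the lift independence that allows us to use the non-homogeneous lift $F(x_d)$. The lift independence is already asserted after \eqref{eq:sphere_lap}, so we may invoke it. The identities are understood for $\theta\in(0,\pi)$, which is where $\cot\theta$ is defined.
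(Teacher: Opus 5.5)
Your proposal is correct and is essentially the paper's proof: the paper also uses the lift $g(x_d)$ (your $F$), computes $\nabla_\S v=g'(t)Pe_d$ and $D^2_\S v=g''(t)(Pe_d)\otimes(Pe_d)-tg'(t)P$, and converts using $f'=-\sin\theta\, g'$ and $f''=(1-t^2)g''-tg'$, working away from the poles. The one slip is calling $F(x_d)$ the $0$-homogeneous extension, which would be $F(x_d/|x|)$; this is harmless, since you justify the choice of lift by lift-independence, as the paper does.
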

The proof of Proposition \ref{prop:sphericalcomp} is given in the appendix
(see page~\pageref{proof:sphericalcomp}). Using the identities in Proposition \ref{prop:sphericalcomp} we can write \eqref{eq:sphericalpart} as
\begin{equation}\label{eq:sphericalparttheta}
f''(\theta) + (d-2)\cot\theta\, f'(\theta) + \kappa_p(\lambda) f(\theta) + (p-2)\left( \frac{f'(\theta)^2f''(\theta) + \lambda^2 f(\theta)f'(\theta)^2}{f'(\theta)^2 + \lambda^2 f(\theta)^2}\right) = 0
\end{equation}
for $0 < \theta < \pi$, where
\begin{equation}\label{eq:kappa}
\kappa_p(\lambda) = \lambda( (p-1)\lambda + d-p).
\end{equation}
Rearranging, we can solve for $f''(\theta)$ to obtain
\begin{equation}\label{eq:shootingODE}
f''= -\frac{( (d-2)\cot\theta\, f' + \kappa_p(\lambda)f)(f'{}^2+\lambda^2f^2) + (p-2)\lambda^2 ff'{}^2}{(p-1)f'{}^2 + \lambda^2 f^2}.
\end{equation}

The ODE \eqref{eq:shootingODE} is a type of nonlinear eigenvalue problem. We can easily see that if $f$ is a solution, so is $c\, f$ for any $c\in \R$, so we need to impose some normalization. Based on the $p=2$ case where $v_2(\omega) = \omega_d^2 - \frac{1}{d}$, and so $f_2(\theta) = \cos^2\theta -\frac{1}{d}$, we impose the initial condition $f(0) = 1 - \frac{1}{d}$. Since $\cot(\pi-\theta)=-\cot\theta$, \eqref{eq:shootingODE} is invariant under the reflection $\theta \to \pi - \theta$; that is $\widetilde f(\theta) = f(\pi-\theta)$ solves the same equation. Since we have proven uniqueness for $p$ near $2$ for appropriate $\lambda$, we know that the solution satisfies $f(\pi-\theta) = f(\theta)$ when $p$ is near $p=2$. Thus, this is a natural condition to impose for all $p>2$, and is equivalent to $f'(\tfrac\pi 2)=0$. Note that this is also equivalent to imposing that $v$ is an \emph{even} function of $\omega_d$. Finally, we also assume that $v$ is a smooth function of $\omega_d$, which is the case for $p$ near $p=2$. This allows us to write $f(\theta)=g(\cos\theta)$ for $g$ smooth, in which case we have $f'(0)=0$. These observations allow us to restrict the domain of the ODE \eqref{eq:shootingODE} to $\theta \in (0,\tfrac\pi 2)$ and impose the boundary conditions
\begin{equation}\label{eq:bc}
f(0) = 1-\frac{1}{d}, \ \ f'(0)=0 \ \ \text{and}  \ \ f'(\tfrac\pi 2)  = 0.
\end{equation}

The reduction to \eqref{eq:shootingODE} allows us to construct counterexamples by solving ODEs, which is a more tractable problem. Before proceeding with describing the computer assisted proof, we present more standard non-rigorous numerical methods for solving \eqref{eq:shootingODE}, which is not a standard initial value problem since we do not know the value of $\lambda$. Determining the value of $\lambda$ can be done with a \emph{shooting method}, which is a standard approach for solving eigenvalue problems \cite{keller2018numerical}, and works as follows. 
For fixed $p$ and $d$, we determine $\lambda$ by the following
shooting procedure:
\begin{enumerate}
\item Choose a trial value of $\lambda$.

\item Solve \eqref{eq:shootingODE} numerically from a small positive
angle $\theta=\varepsilon$ to $\theta=\pi/2$. Since $\cot\theta$
is singular at $\theta=0$, we obtain the initial values at
$\varepsilon$ from the Taylor expansion at $\theta=0$, as
described below.

\item Compute the endpoint derivative $f_\lambda'(\pi/2)$ by writing $f'=g,\ g'=F(\theta,f,g;\lambda,p,d)$ and solving for $g=f'$ and $f$. We seek a value of $\lambda$ for which this derivative is zero.

\item Find two trial values of $\lambda$ for which the endpoint
derivatives have opposite signs. By continuous dependence on
$\lambda$, there is a zero between them.

\item Apply bisection: solve the ODE for the midpoint of the
current interval and retain the half whose endpoint derivatives
have opposite signs. Repeat until the interval is sufficiently small.
\end{enumerate}
This procedure reduces the determination of $\lambda$ to a scalar root-finding problem. We use the bisection method because it is simple, but other root finding algorithms can be used in place of it. In fact, we observed numerically that $\lambda \to f_\lambda'(\tfrac\pi 2)$ is strictly monotone increasing for $\lambda\in [4/3,2]$, so the bisection search finds a unique zero; in the code we used a bisection search with initial interval $[4/3,2]$ and $50$ steps, which brackets the root of the computed shooting function to a width below $10^{-15}$. This is the resolution of the bracket, not the accuracy of the eigenvalue itself, which is limited by the tolerances of the non-rigorous integration (\texttt{rtol}\,$=10^{-12}$, \texttt{atol}\,$=10^{-14}$); no part of this search is trusted in what follows. We observed (see below) that the values of $\lambda$ decrease from $2$ to $4/3$ as $p$ goes from $p=2$ to $p=\infty$, so the initial interval for $\lambda$ is valid.

Once we solve for the pair $(f,\lambda)$, we need to compute the constant $\C(p)$ defined in \eqref{eq:Cp}. For this, we use the axial symmetry of $v$, spherical coordinates, and the evenness $f(\pi-\theta) = f(\theta)$ to write
\begin{equation}\label{eq:avgsph}
\dashint_{\S^{d-1}} v(\omega) \, dS(\omega) = 2\sigma_d\int_0^{\pi/2} \sin^{d-2}\theta f(\theta)\, d\theta,
\end{equation}
where
\begin{equation}\label{eq:sigmadval}
\sigma_d = \frac{|\S^{d-2}|}{|\S^{d-1}|} = \frac{\Gamma(\tfrac d2)}{\sqrt\pi \Gamma(\tfrac{d-1}2)}
\end{equation}
is the same constant that appears in \eqref{sigma_d} in Appendix \ref{sec:appendix}. This integral is evaluated numerically with quadrature on sample points from the solution computed with the shooting method. The min and max occur at $\theta=\pi/2$ and $\theta=0$, respectively (see Figure \ref{fig:numerics}).  We note here that we do not use the same normalization of $v$ as in Theorem \ref{thm:main}, so the constant $\C(p)$ may differ, but the sign remains the same, and this is all that matters for the counterexample.

Now, there is a minor difficulty in this plan, as there is a singularity in \eqref{eq:shootingODE} when $\theta=0$ due to the $\cot\theta$ term. Since $f'(0)=0$ this singularity is removable, but it needs to be handled carefully in the numerical integration method. To do this, we note that $\cot\theta f'(\theta) \sim f''(0)$ as $\theta\to 0$,  since $f'(0)=0$ gives $f'(\theta)=f''(0)\theta+o(\theta)$ and $\cot\theta\sim\theta^{-1}$ as $\theta\to0$. So we can send $\theta\to 0$ in \eqref{eq:sphericalparttheta} to obtain
\begin{equation}\label{eq:fpp}
f''(0) = -\frac{1}{d}\kappa_p(\lambda).
\end{equation}
Therefore, we start the integration from $\theta=\eps>0$ for a specified small $\eps>0$ and use the initial conditions
\begin{equation}\label{eq:ic}
f(\eps) = 1-\frac1d - \frac1{2d}\kappa_p(\lambda)\eps^2, \ \ \text{and} \ \  f'(\eps)=-\frac1d\kappa_p(\lambda)\epsilon.
\end{equation}
Since $f$ is an even function of $\theta$, the error in the approximation of $f(\eps)$ in \eqref{eq:ic} is $O(\eps^4)$. We used $\eps=10^{-6}$ so this error is well below machine precision.

\begin{figure}[!t]
\centering
\subfloat[Solutions of \eqref{eq:shootingODE} for various $p$]{\includegraphics[width=0.48\textwidth]{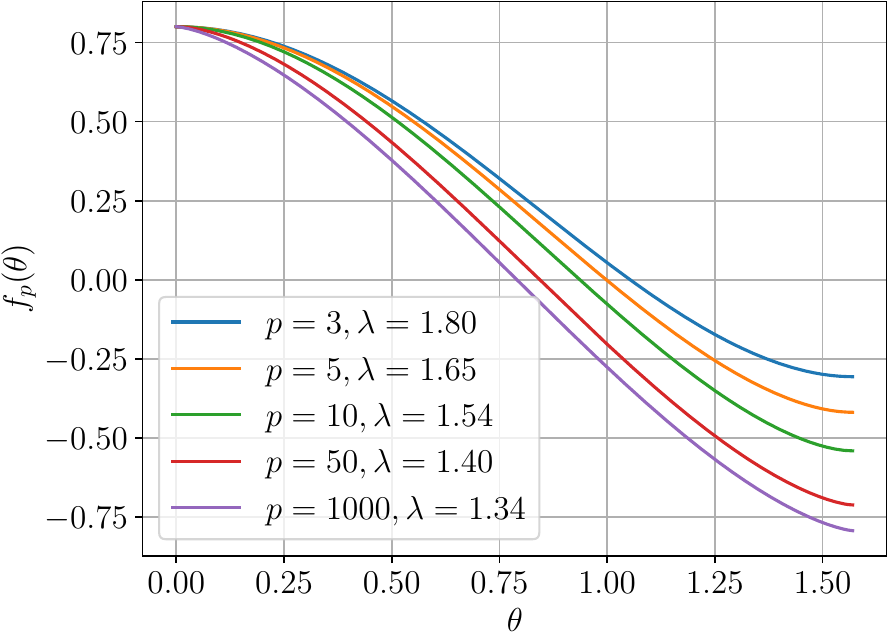}}
\hspace{5mm}
\subfloat[Mean-value defect $\C(p)$]{\includegraphics[width=0.48\textwidth]{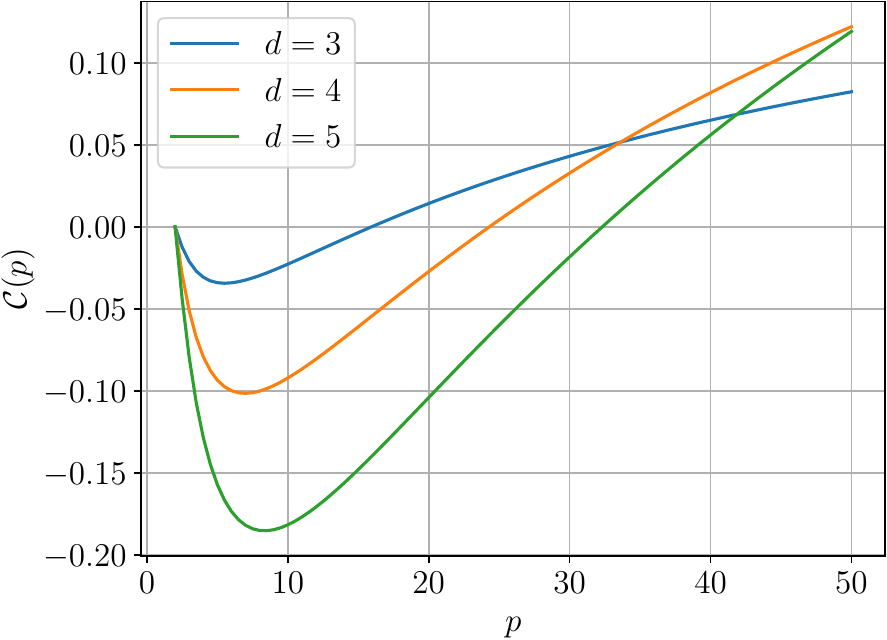}}
\caption{Solution profiles and mean-value defect $\C(p)$.}
\label{fig:numerics}
\end{figure}

Figure \ref{fig:numerics} shows several profiles of the solution $f$ for $d=5$ and various values of $p$, along with the mean-value defect $\C(p)$ plotted over a range of values of $p$ for $d=3,4,5$. In all cases, the value of $\lambda(p)$ decreases monotonically with $p$ towards the Aronsson exponent $\lambda(\infty)=4/3$ as $p\to \infty$.  These numerical experiments suggest that the counterexample holds for all $p>2$ with the exception of a \emph{single} value $p^*_{d}$ where the constant $\C(p)$ has a zero and changes sign. It appears that $\C(p)<0$ for $p \in (2,p^*_{d})$ and $\C(p) > 0$ for $p > p^*_{d}$. We computed the values $p^*_{d}$ for $d=3,\dots,10$ with a bisection search with $50$ steps. The values of $p^*_{d}$ approximated to 2 decimal places for $d\leq 10$ are $p^*_{3}\approx  15.94$, $p^*_{4}\approx  24.3 $, $p^*_{5}\approx  32.32$, $p^*_{6}\approx  40.3 $, $p^*_{7}\approx  48.34$, $p^*_{8}\approx  56.49$, $p^*_{9}\approx  64.73$, and $p^*_{10}\approx 73.09$.

\subsection{Base dimension counterexamples}
\label{sec:cap1}

In this section, we make the results in Section \ref{sec:odered} rigorous through the use of computer assisted proofs. In particular, we use interval arithmetic and validated interval ODE solvers. In interval arithmetic, as opposed to floating point, quantities are represented by intervals guaranteed to contain the exact values, and arithmetic operations are performed with outward rounding in order to preserve the containment property  \cite{Moore1966,Tucker2011}. This makes it possible to rigorously account for floating-point roundoff and truncation errors, as well as uncertainty in input data. For initial value problems for ODEs, validated numerical integrators use interval arithmetic together with suitable set representations and enclosure techniques to prove the existence of solutions over a given time interval and to produce guaranteed enclosures of the exact flow for entire sets of initial conditions \cite{NedialkovJacksonCorliss1999,Lohner1992}. In this work we use the CAPD::DynSys library \cite{KapelaMrozekWilczakZgliczynski2021}, which implements rigorous interval-based integration using Lohner-type algorithms and can additionally compute rigorous enclosures of derivatives of the flow with respect to initial conditions and parameters. Such validated numerical methods have been used extensively in computer-assisted proofs in dynamical systems; a celebrated example is Tucker's proof of the existence of the Lorenz attractor \cite{Tucker1999}.

More precisely, the CAPD library allows an ODE $x'=F(x,t)$ to be integrated over a \emph{set} of initial conditions $X_0\subset \R^n$. When successful, the validated solver certifies that for every $x_0\in X_0$ the corresponding solution $x(\cdot;x_0)$ exists on the full time interval $[t_0,t]$, and it computes rigorous set enclosures both for the endpoint image $\{x(t;x_0) \st x_0 \in X_0\}$, and for the entire family of trajectory segments $\{x(s;x_0) \st x_0 \in X_0,\ s\in[t_0,t]\}$. Consequently, one can rigorously bound not only the values of solutions at a fixed time, but also extrema and other quantities depending on the solution over the whole integration interval. Thus, the computation not only provides rigorous bounds on solutions of the ODE, but also certifies existence over the specified time interval simultaneously for all initial conditions in $X_0$.  Uniqueness follows from standard ODE theory when $F$ is locally Lipschitz in $x$. The CAPD library can also establish rigorous bounds on derivatives of the flow with respect to initial conditions and parameters in the model.

In this setting, the computer assisted computations must supply, for each exponent $p$, the solution of the boundary value problem \eqref{eq:shootingODE}, \eqref{eq:bc} together with a certified enclosure of the defect \eqref{eq:Cp} that excludes $0$. Both are statements about a single scalar ODE on $[0,\tfrac\pi2]$, and both are within reach of the validated integration just described, with one exception: the singular endpoint $\theta=0$, where the approximation \eqref{eq:ic} must be replaced by a rigorous enclosure. This must be handled carefully and is the subject of Lemma \ref{lem:pole} given later in this section. Using these ideas, we prove in this section the following result. 

\begin{table}[!t]
\centering
\begin{tabular}{ccrccc}
\toprule
$d$ & range of $p$ & boxes & $\lambda^*(p)\in$ &
bound on $\C(p)$ & sign \\
\midrule
$3$ & $[2.0000001,\, 15.94257]$ & $2948$ & $[1.4424,\, 1.99999997]$ & $\C(p)\leq -3.7\times 10^{-11}$ & $-$ \\
$3$ & $[15.94261,\, 100]$ & $4617$ & $[1.3568,\, 1.4425]$ & $\C(p)\geq 2.04\times 10^{-10}$ & $+$ \\
$4$ & $[2.0000001,\, 24.25]$ & $2978$ & $[1.4299,\, 1.99999997]$ & $\C(p)\leq -5.16\times 10^{-12}$ & $-$ \\
$4$ & $[24.36,\, 100]$ & $3818$ & $[1.3653,\, 1.4297]$ & $\C(p)\geq 3.57\times 10^{-5}$ & $+$ \\
$5$ & $[2.0000001,\, 32.28]$ & $3381$ & $[1.4245,\, 1.99999998]$ & $\C(p)\leq -3.5\times 10^{-10}$ & $-$ \\
$5$ & $[32.38,\, 100]$ & $3412$ & $[1.3725,\, 1.4244]$ & $\C(p)\geq 1.55\times 10^{-5}$ & $+$ \\
$6$ & $[2.0000001,\, 40.24]$ & $3774$ & $[1.4214,\, 1.99999998]$ & $\C(p)\leq -3.07\times 10^{-10}$ & $-$ \\
$6$ & $[40.36,\, 100]$ & $3009$ & $[1.3791,\, 1.4214]$ & $\C(p)\geq 1.57\times 10^{-5}$ & $+$ \\
$7$ & $[2.0000001,\, 48.28]$ & $4172$ & $[1.4193,\, 1.99999998]$ & $\C(p)\leq -5.9\times 10^{-11}$ & $-$ \\
$7$ & $[48.4,\, 100]$ & $2609$ & $[1.3849,\, 1.4192]$ & $\C(p)\geq 1.29\times 10^{-5}$ & $+$ \\
$8$ & $[2.0000001,\, 56.42]$ & $4571$ & $[1.4176,\, 1.99999999]$ & $\C(p)\leq -3.12\times 10^{-10}$ & $-$ \\
$8$ & $[56.54,\, 100]$ & $2208$ & $[1.3902,\, 1.4176]$ & $\C(p)\geq 5.85\times 10^{-6}$ & $+$ \\
$9$ & $[2.0000001,\, 64.67]$ & $5011$ & $[1.4163,\, 1.99999999]$ & $\C(p)\leq -3.16\times 10^{-10}$ & $-$ \\
$9$ & $[64.79,\, 100]$ & $1807$ & $[1.3951,\, 1.4163]$ & $\C(p)\geq 9.83\times 10^{-6}$ & $+$ \\
$10$ & $[2.0000001,\, 70]$ & $5245$ & $[1.4172,\, 1.99999999]$ & $\C(p)\leq -5.16\times 10^{-10}$ & $-$ \\
$10$ & $[76,\, 100]$ & $1200$ & $[1.3997,\, 1.4135]$ & $\C(p)\geq 0.00812$ & $+$ \\
\bottomrule
\end{tabular}
\caption{Certified ranges for Theorem \ref{thm:cap}. For each dimension $d$ and range $[p_-,p_+]$: the number of boxes $P_i$ that tile $[p_-,p_+]$, each certified by (C1)--(C5); the union over those boxes of the eigenvalue enclosures, which contains $\lambda^*(p)$ for every $p$ in the range; and the least favorable bound on the mean value defect, whose sign is constant. 
All numbers are rounded outward. The two ranges in each dimension straddle the numerically observed crossing $p^*_d$; the computation does not establish that $\C$ has only one zero there, and the interval between them is simply uncertified.}
\label{tab:cap}
\end{table}

\begin{theorem}\label{thm:cap}
Let $d$ and $[p_-,p_+]$ be any of the dimensions and ranges listed in Table \ref{tab:cap}. Then for \emph{every} $p\in[p_-,p_+]$ there exist $\lambda\in(1,2)$ and a $p$-harmonic function $u\in C^{1,\lambda-1}_{loc}(\R^d)\cap C^\infty(\R^d\setminus\{0\})$ such that the pointwise expansion \eqref{eq:pointwise} fails at $x=0$.
\end{theorem}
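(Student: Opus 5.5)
The plan is to turn, for each fixed $p$, a certified solution of the boundary value problem \eqref{eq:shootingODE}, \eqref{eq:bc} into a $p$-harmonic function of the form $u=r^{\lambda}v$. We then run the argument of Theorem~\ref{thm:existence} and of the proof of Theorem~\ref{thm:main}, with the perturbative information (the Implicit Function Theorem) replaced by certified enclosures. We cover $[p_-,p_+]$ by the finitely many boxes $P_i$ listed in Table~\ref{tab:cap} and argue box by box. For each box the computer verifies conditions (C1)--(C5), which I would formulate as follows.

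\begin{enumerate}
\item[(C1)] The singular endpoint is controlled. For a small $\theta_0>0$, Lemma~\ref{lem:pole} gives rigorous enclosures of $(f(\theta_0),f'(\theta_0))$ for the unique solution that is analytic in $\cos\theta$, with $f(0)=1-\frac1d$ and $f'(0)=0$. These enclosures hold uniformly for $p\in P_i$ and $\lambda$ in a trial interval $[\lambda_-,\lambda_+]$.
\item[(C2)] Validated integration from $\theta_0$ to $\pi/2$ succeeds. In particular the denominator $(p-1)f'^2+\lambda^2f^2$ stays positive along the enclosure, so the vector field is locally Lipschitz and the solution exists and is unique.
\item[(C3)] The shooting function has opposite signs at the ends: $f'_{\lambda_-}(\pi/2)<0<f'_{\lambda_+}(\pi/2)$ for all $p\in P_i$.
\item[(C4)] $1<\lambda_-$ and $\lambda_+<2$.
\item[(C5)] The enclosure of $\C(p)$ from \eqref{eq:Cp} excludes $0$, uniformly over the box. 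Here the spherical average is computed by \eqref{eq:avgsph}, and the maximum and minimum of $f$ are bounded from the trajectory enclosures.
\end{enumerate}

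Given these, fix $p\in P_i$. Continuous dependence of the solution on $\lambda$, together with the intermediate value theorem applied to (C3), produces $\lambda^*\in(\lambda_-,\lambda_+)\subset(1,2)$ with $f'(\pi/2)=0$. Even reflection across $\pi/2$ then gives a $C^2$ solution on $(0,\pi)$, and hence a function $v$ on $\S^{d-1}$. It is smooth away from the poles by ODE theory, and at the poles because Lemma~\ref{lem:pole} provides smoothness in $\omega_d$.

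By Proposition~\ref{prop:sphericalcomp} and \eqref{eq:pLapS}, the function $u=r^{\lambda^*}v$ satisfies $\Delta_p^N u=0$ away from the origin wherever $\nabla u\ne0$. The gradient identity $|\nabla u|^2=r^{2\lambda-2}(f'^2+\lambda^2f^2)$ combined with (C2) shows that $\nabla u\neq 0$ for $x\ne0$. Elliptic regularity then gives $u\in C^\infty(\R^d\setminus\{0\})$. Homogeneity of degree $\lambda^*>1$ gives $u\in C^{1,\lambda^*-1}_{loc}$.

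At the origin the viscosity argument is exactly as in Theorem~\ref{thm:existence}. The function $f$ changes sign, since $f(0)>0$ and the enclosure of $f(\pi/2)$ is negative (part of (C5)). Because $\lambda^*<2$, no $C^2$ test function can touch $u$ at $0$. Finally, $\M_\eps u(0)=\C(p)\eps^{\lambda^*}/(p+d)$ by the computation leading to \eqref{eq:Mepschi}; note that this computation uses only the sign change of $v$. Since $\C(p)\neq0$ and $\lambda^*<2$, the quotient $\eps^{-2}\M_\eps u(0)$ tends to $\pm\infty$, and \eqref{eq:pointwise} fails.

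I expect the main obstacle to be the singular endpoint $\theta=0$. Validated solvers cannot start at $\theta=0$, where the $\cot\theta$ term blows up. One therefore needs an analytic lemma: a contraction or majorant-series argument in the variable $s=1-\cos\theta$ that proves existence and uniqueness of the regular solution near $0$, together with explicit remainder bounds valid for ranges of $(p,\lambda)$. A secondary subtlety is (C3). Continuity in $\lambda$ is needed on the whole interval $[\lambda_-,\lambda_+]$, so the integration should be run for the whole $\lambda$-box, with the certified enclosures serving as a uniform existence statement. Uniqueness of $\lambda^*$ is not needed.
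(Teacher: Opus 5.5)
Your proposal is correct and follows essentially the same route as the paper. Both arguments certify each box, obtain $\lambda^*$ from the intermediate value theorem using continuous dependence on $\lambda$ (from Lemma~\ref{lem:pole} near the pole, and from the nonvanishing denominator along the validated tube beyond $\theta_0$), and then apply the profile-to-counterexample argument of Proposition~\ref{prop:profile} with the defect formula \eqref{eq:Cp}. One small correction: Lemma~\ref{lem:pole} gives only $f\in C^2([0,\Theta])$ with $f'(0)=0$, not smoothness in $\omega_d$, but this is enough, since the radial derivative formulas then give $v\in C^2$ at the poles, and elliptic regularity (with $\nabla u\neq0$, which on $[0,\theta_0]$ uses $f\geq a/2>0$) upgrades this to $C^\infty(\R^d\setminus\{0\})$.
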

The two ranges listed in each dimension straddle the exponent $p^*_d$ at which $\C(p)$ was observed in Figure \ref{fig:numerics} to change sign, and they leave a gap there. For $d\geq4$ this gap is closed in Section \ref{sec:lifting} by lifting lower dimensional counterexamples, while for $d=3$ the gap remains and cannot be addressed by the techniques in this paper.

The proof of Theorem \ref{thm:cap} requires two intermediate results. First, we need to verify that a solution of the ODE \eqref{eq:shootingODE} can indeed produce a $p$-harmonic counterexample. This is encapsulated in the following result, which is proved in Appendix \ref{sec:appendix_cap}. 
\begin{proposition}\label{prop:profile}
Let $d\geq 3$, $p>2$ and $1<\lambda<2$, and let $f\in C^2([0,\tfrac\pi2])$ solve \eqref{eq:shootingODE} on $(0,\tfrac\pi2]$ with
\begin{equation}\label{eq:caphyp}
f'(0)=0, \ \ f(0)>0, \ \ f'(\tfrac\pi2)=0, \ \ \text{and} \ \ f(\tfrac\pi2)<0,
\end{equation}
and suppose the nondegeneracy condition
\begin{equation}\label{eq:capnondeg}
f'(\theta)^2+\lambda^2f(\theta)^2>0 \ \ \text{for all} \ \
\theta\in[0,\tfrac\pi2]
\end{equation}
holds. Extend $f$ to $[0,\pi]$ by $f(\pi-\theta)=f(\theta)$, and define $v(\omega)=f(\theta)$ for $\omega_d=\cos\theta$ and $u(x)=r^\lambda v(\omega)$. Then $u\in C^{1,\lambda-1}_{loc}(\R^d)\cap C^\infty(\R^d\setminus\{0\})$ is $p$-harmonic on $\R^d$, satisfies $u(0)=0$ and $\nabla u(0)=0$, and 
\begin{equation}\label{eq:capdefect}
\M_\eps u(0) - u(0) = \frac{\C(p)}{p+d}\,\eps^{\lambda}
\ \ \text{for all} \ \ \eps>0
\end{equation}
holds with
\begin{equation}\label{eq:capCf}
\C(p) = \frac{p-2}{2}\Big(\max_{[0,\pi/2]}f+\min_{[0,\pi/2]}f\Big) + \frac{2d(d+2)\sigma_d}{\lambda+d}\int_0^{\pi/2}\sin^{d-2}\theta\, f(\theta)\, d\theta,
\end{equation}
where $\sigma_d$ is the constant \eqref{eq:sigmadval}.
\end{proposition}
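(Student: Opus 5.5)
The plan is to reproduce the argument used for Theorem~\ref{thm:existence} and the proof of Theorem~\ref{thm:main}. The one new ingredient is regularity of $v$ at the poles, since $f$ is now only a $C^2$ solution of an ODE and not the output of the Implicit Function Theorem in $C^{2,\gamma}$.

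\textbf{Step 1: regularity of $v$ on the sphere.} First I will show that $v(\omega)=f(\theta)$ defines a $C^2$ function on $\S^{d-1}$. The reflected extension is $C^2$ at $\theta=\tfrac\pi2$. This uses $f'(\tfrac\pi2)=0$, the reflection invariance of \eqref{eq:shootingODE} under $\theta\mapsto\pi-\theta$, and ODE uniqueness near $\tfrac\pi2$, where the equation is regular because $\cot$ is smooth there and the denominator is positive by \eqref{eq:capnondeg}.

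At the poles, $f'(0)=0$ together with $f\in C^2$ gives $f(\theta)=f(0)+\tfrac12 f''(0)\theta^2+o(\theta^2)$. I will check that $f$ is even to second order, so $v$ is $C^2$ as a function of $x'=(\omega_1,\dots,\omega_{d-1})$ near $e_d$. The formulas of Proposition~\ref{prop:sphericalcomp} then hold on the whole sphere, with $\cot\theta f'\to f''(0)$ at the poles.

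The nondegeneracy \eqref{eq:capnondeg} says that $|\nabla_\S v|^2+\lambda^2v^2>0$ everywhere. By the identity in \eqref{eq:positivity}, this gives $|\nabla u|>0$ on $\R^d\setminus\{0\}$.

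\textbf{Step 2: $p$-harmonicity away from the origin.} By Proposition~\ref{prop:sphericalcomp}, $f$ solving \eqref{eq:shootingODE} is equivalent to $\L_p^\lambda v=0$ away from the poles, and by continuity also at the poles. Then \eqref{eq:pLap-Lplambda} gives $\Delta_p^Nu=0$ classically on $\R^d\setminus\{0\}$.

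Since $\nabla u\neq0$ there, the equation is uniformly elliptic with coefficients depending smoothly on $\nabla u$. A standard bootstrap then gives $u\in C^\infty(\R^d\setminus\{0\})$: first Schauder estimates for the linear equation with frozen coefficients, then differentiation of the equation.

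\textbf{Step 3: the origin.} Homogeneity of degree $\lambda\in(1,2)$ and $v\in C^1$ give $|u|\le Cr^\lambda$, so $u(0)=0$ and $\nabla u(0)=0$. Moreover $\nabla u$ is homogeneous of degree $\lambda-1$ and smooth on the sphere, which yields $C^{1,\lambda-1}_{loc}$ through the usual scaling argument for homogeneous functions.

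For the viscosity property at $0$, I will argue exactly as in Theorem~\ref{thm:existence}. We have $v(e_d)=f(0)>0$ and $v$ equals $f(\tfrac\pi2)<0$ on the equator. A $C^2$ function touching $u$ from above at $0$ satisfies $\phi(x)\le \nabla\phi(0)\cdot x+Cr^2$. Testing along $\pm$ directions where $v<0$ forces $\nabla\phi(0)=0$. Along $e_d$ we would then need $r^\lambda f(0)\le Cr^2$, which is impossible because $\lambda<2$. The same argument rules out test functions from below. So there are no test functions at $0$, and the viscosity conditions hold vacuously. Equivalence with weak $p$-harmonicity then follows from \cite{juutinen2001equivalence}.

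\textbf{Step 4: the defect formula.} Since $v$ changes sign, the maximum and minimum of $u$ over $B_\eps$ are attained on $\partial B_\eps$. They equal $\eps^\lambda\max_{[0,\pi/2]}f$ and $\eps^\lambda\min_{[0,\pi/2]}f$, because the extended $f$ has the same range as its restriction to $[0,\tfrac\pi2]$.

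The ball average is $\frac{d}{\lambda+d}\eps^\lambda\dashint_{\S^{d-1}}v$. Using \eqref{eq:avgsph} and evenness about $\tfrac\pi2$, this gives \eqref{eq:capdefect}–\eqref{eq:capCf} after multiplying by $\beta=\frac{d+2}{p+d}$ and $\alpha/2$, as in \eqref{eq:Cp}. The coefficient $d(d+2)/(\lambda+d)$ arises from $\beta\cdot d/(\lambda+d)$ times $(p+d)$.

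\textbf{Main obstacle.} I expect the delicate point to be the regularity of $v$ at the poles, i.e. showing that the $C^2$ ODE solution gives a genuinely $C^2$ (and then smooth) function on the sphere despite the $\cot\theta$ singularity. If direct verification is awkward, the fallback is to prove $p$-harmonicity on the punctured cone near the axis in the viscosity sense, using the removable-singularity behaviour $\cot\theta f'\to f''(0)$. Interior regularity for $\nabla u\neq0$ then upgrades it to smoothness.
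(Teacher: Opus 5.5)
Your proposal is correct and follows the paper's proof essentially step for step. The shared steps are: $C^2$ regularity of $v$ (reflection plus ODE uniqueness at the equator, $f'(0)=0$ at the poles); the equation off the axis, extended to the axis by continuity; elliptic regularity using $\nabla u\neq0$; the homogeneity estimate for $C^{1,\lambda-1}$; the vacuous viscosity condition at the origin coming from the sign change of $v$; and the defect computation from the proof of Theorem~\ref{thm:main}.

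The pole regularity you flag as the main obstacle is routine, and your fallback is not needed. In coordinates $y$ centered at $e_d$ with $|y|=\theta$, the radial Hessian formula gives $v\in C^2$ directly, since $f'(t)/t\to f''(0)$ (mean value theorem with $f'(0)=0$) and $f''(t)\to f''(0)$.
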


Fix a dimension $d$. Since an interval ODE solver can take intervals as initial conditions for any state variables, we can certify counterexamples for intervals of values of exponent $p$ in a single computation, and use a finite number of computations to cover a large range of exponent values. To describe how this works, let $P=[p_1,p_2]$ be a closed exponent box, and $\Lambda=[\lambda_1,\lambda_2]$ a closed eigenvalue box, and put $a=1-\frac1d$ for notational simplicity. It is convenient to rescale time by $\theta=\frac\pi2\tau$, so that the integration endpoint is $1$, which is exactly representable. It is also convenient to append the eigenvalue  $\lambda$, exponent $p$, and the running integral $\Jc$ 
\begin{align*}
\mathcal J(\tau)
=
\int_0^{\frac{\pi}{2}\tau}
\sin^{d-2}\theta\,f(\theta)\,d\theta.
\end{align*}
of mean defect \eqref{eq:capCf} to the state variables in the interval ODE solver. To avoid notational conflicts we write $\ell$ and $q$ in place of $\lambda$ and $p$ in the ODE system. Thus, the ODE we will certify is given by 
\begin{equation}\label{eq:capsystem}
\frac{\d{}}{\d\tau}
\begin{pmatrix}f\\ g\\ \Jc\\ \ell\\ q\end{pmatrix}
=
\begin{pmatrix}
\frac\pi2\,g\\[1mm]
-\frac\pi2\,\dfrac{\big((d-2)\cot(\tfrac\pi2\tau)\,g+\kappa_q(\ell)f\big)
\big(g^2+\ell^2f^2\big)+(q-2)\ell^2fg^2}{(q-1)g^2+\ell^2f^2}\\[3mm]
\frac\pi2\sin^{d-2}(\tfrac\pi2\tau)\,f\\[1mm]
0\\[1mm]
0
\end{pmatrix}.
\end{equation}
As we hinted at above, this ODE has a pole at $\tau=0$ and thus cannot be integrated from $\tau=0$ with a validated integrator. Instead we fix some $\tau_0>0$ and we propagate the enclosures from $\tau=0$ to $\tau_0$ using Lemma \ref{lem:pole} (below) and then we use the validated integrator starting at the positive time $\tau_0$, which is away from the pole. Note that by appending the running integral $\Jc$ to the ODE, the validated solver gives an interval enclosure of the value 
\[\Jc(1)-\Jc(\tau_0)=\int_{\theta_0}^{\pi/2}\sin^{d-2}\theta\, f(\theta)\d\theta,\] 
where $\theta_0=\frac\pi2\tau_0$,  which is required to certify the enclosure of $\C(p)$ using the formula \eqref{eq:capCf}. Certified enclosures for the $\min$ and $\max$ terms in \eqref{eq:capCf} are provided automatically by the validated solver. 
\begin{remark}\label{rem:lambdap}
It may seem odd to the reader not well-versed in interval arithmetic or validated integrators that the eigenvalue $\ell$ and exponent $q$ are included in the certified ODE when they are constant over time, so their initial values do not change. The reason for doing this is that the validated solver propagates one set in the full state space rather than a range per variable, and what keeps that set thin is the correlation it stores---it knows by how much $f$ and $g$ and the running integral are displaced according to where $\ell$ and $q$ sit in their intervals. Declaring $\ell$ and $q$ as state variables with zero derivative preserves that correlation structure and is essential for establishing sufficiently tight enclosures on $\C(p)$.
\end{remark}

\subsection{Singular endpoint}

The initial data at $\tau_0$ cannot be taken from the approximation
\eqref{eq:ic} and must be enclosed rigorously instead, which is the one step
the validated integrator cannot perform. As mentioned above, the ODE
\eqref{eq:shootingODE} is singular at $\theta=0$, so the conditions there are
not standard Cauchy data. Rather, $f'(0)=0$ selects the regular branch, since
\eqref{eq:shootingODE} also admits solutions whose derivative blows up at the
origin. Indeed, where $f'$ dominates $f$ the leading balance in
\eqref{eq:shootingODE} is $(p-1)f''+(d-2)f'/\theta\approx0$, giving
$f'(\theta)\sim c\,\theta^{-(d-2)/(p-1)}$ as $\theta\to0$. Selecting the regular branch
is purely an ODE matter.

Lemma \ref{lem:pole} below handles the singular endpoint $\theta=0$: after
rewriting the equation in the form
\[
f''+\frac{d-2}{\theta}f'=R(\theta,f,f'),
\]
it selects the regular branch and gives quantitative enclosures around the
leading behavior
\[
f(\theta)\approx a-\frac{\kappa}{2d}\theta^2,
\qquad
f'(\theta)\approx-\frac{\kappa}{d}\theta,
\]
thereby providing rigorous initial data at a small positive time
$\theta_0$. The explicit expression for $R$ is given in Appendix
\ref{sec:appendix_cap}. In particular,
\[
R(\theta,a,0)=-\kappa a,
\qquad \theta>0,
\]
where $\kappa=\kappa_p(\lambda)$ is as in \eqref{eq:kappa}, and hence the
limiting value at the initial point is $R_0=-\kappa a$. The equation then
forces
\[
f''(0)=\frac{R_0}{d-1}=-\frac{\kappa}{d},
\]
in agreement with \eqref{eq:fpp}.

The following lemma turns this into quantitative enclosures; the proof uses standard ODE techniques and is given in Appendix \ref{sec:appendix_cap}.  The explicit constants used in the lemma are exactly those the program evaluates in interval arithmetic for every parameter box.

\begin{lemma}[Regular branch near the singular endpoint]\label{lem:pole}
Let $d\geq 3$, $p\geq 2$ and $\lambda>0$ with $\kappa>0$, let $0<\Theta\leq\frac12$, and set $\rho=\frac{2\kappa}d$ and $c_1=\frac{4(p-2)\rho^2}{\lambda^2a^2}$. Define
\[
c_0=(d-2)(0.34+c_1)\rho+\frac{2\kappa(p-2)\rho^2}{\lambda^2a}
+\frac{2(p-2)\rho^2}{a},\quad
\gamma_f=\frac{16(p-2)\rho^2}{\lambda^2a^3},\quad
\gamma_g=\frac{8(p-2)\rho}{\lambda^2a^2},
\]
\begin{align*}
c_f&=(d-2)\rho\gamma_f+2\kappa c_1+\frac{4(p-2)\rho^2}{a^2},
\qquad L_f=\kappa+c_f\Theta^2,\\
L_g&=(d-2)(0.34+c_1)+(d-2)\rho\gamma_g+\tfrac32a\kappa\gamma_g
+\frac{4(p-2)\rho}{a},
\end{align*}
and assume that
\begin{align*}
\mathrm{(H1)}\quad&\kappa\Theta^2\leq\tfrac{ad}2,\\
\mathrm{(H2)}\quad&\frac{\kappa^2\Theta^2}{d(d-1)}
+\frac{c_0\Theta^2}{d+1}\leq\frac\kappa d,\\
\mathrm{(H3)}\quad&2\Theta\big(L_f^2+L_g^2\Theta^2\big)^{1/2}<d-1 .
\end{align*}
Then there is a solution $f\in C^2([0,\Theta])$ of \eqref{eq:shootingODE} on $(0,\Theta]$ with $f(0)=a$, $f'(0)=0$ and $f''(0)=-\kappa/d$, which satisfies
\begin{align}
\Big|f(\theta)-a+\frac{\kappa\theta^2}{2d}\Big|
&\leq e_f(\theta):=\frac{\kappa^2\Theta^2\theta^2}{2d(d-1)}
+\frac{c_0\theta^4}{4(d+1)},\label{eq:capef}\\
\Big|f'(\theta)+\frac{\kappa\theta}{d}\Big|
&\leq e_g(\theta):=\frac{\kappa^2\Theta^2\theta}{d(d-1)}
+\frac{c_0\theta^3}{d+1},\label{eq:capeg}
\end{align}
for all $\theta\in[0,\Theta]$. By \textup{(H2)} we have $e_f(\theta)\leq\kappa\theta^2/(2d)$, so that $0<\frac a2\leq f\leq a$ on $[0,\Theta]$ with $\max_{[0,\Theta]}f=f(0)=a$. This solution is unique in the class $C^1([0,\Theta])\cap C^2((0,\Theta])$ satisfying $f(0)=a$, $f'(0)=0$, $|f(\theta)-a|\leq\frac{\kappa\Theta^2}{d}$ and $|f'(\theta)|\leq\rho\theta$ for every $\theta\in[0,\Theta]$. If $\lambda$ ranges over a compact interval $\Lambda$ on which \textup{(H1)--(H3)} hold, then $\lambda\mapsto(f_\lambda,f'_\lambda)\in C([0,\Theta])^2$ is continuous.
\end{lemma}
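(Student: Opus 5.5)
Following the discussion preceding the lemma, I write \eqref{eq:shootingODE} in the singular form $f''+\frac{d-2}{\theta}f'=R(\theta,f,f')$. I then convert it into a fixed point problem for $g=f'$ by multiplying by $\theta^{d-2}$, so that $(\theta^{d-2}f')'=\theta^{d-2}R$. With $f'(0)=0$ selecting the regular branch, this gives
\[
f'(\theta)=\theta^{2-d}\int_0^\theta s^{d-2}R(s,f(s),f'(s))\,ds,\qquad f(\theta)=a+\int_0^\theta f'(s)\,ds .
\]
I define the operator $T(f,g)=(F,G)$ by these right-hand sides. It acts on the closed set $X$ of pairs $(f,g)\in C([0,\Theta])^2$ with $|f-a|\le \kappa\Theta^2/d$ and $|g(\theta)|\le\rho\theta$. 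Hypothesis (H1) gives $\kappa\Theta^2/d\le a/2$, so $f\ge a/2>0$ on $X$. This bounds the denominator $(p-1)g^2+\lambda^2f^2$ below by $\lambda^2a^2/4$, so $R$ is smooth and bounded on $X$.

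The first step is a careful expansion of $R$ around $(\theta,a,0)$. The main ingredients are the identity $R(\theta,a,0)=-\kappa a$ and the estimates below.
\begin{itemize}
\item The cotangent correction satisfies $|\cot\theta-\theta^{-1}|\le 0.34\,\theta$ for $\theta\le 1/2$.
\item The $(p-2)$ rational term is bounded using $|g|\le\rho\theta$ and $f\ge a/2$, which is where $c_1$ and the other $(p-2)\rho^2/(\lambda^2a^2)$-type factors arise.
\end{itemize}
Together these give $|R(\theta,f,g)+\kappa f|\le c_0\theta^2$ on $X$. Writing $-\kappa f=-\kappa a+\kappa(a-f)$ and using $|a-f|\le\kappa\Theta^2/(2d)$-type bounds then yields $|R+\kappa a|\le \kappa^2\Theta^2/d+c_0\theta^2$ after the integrations.

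The integrations themselves are explicit: $\theta^{2-d}\int_0^\theta s^{d-2}\,ds=\theta/(d-1)$ and $\theta^{2-d}\int_0^\theta s^{d}\,ds=\theta^3/(d+1)$. Hence $G$ satisfies \eqref{eq:capeg}, with leading term $-\kappa a\theta/(d-1)$ corrected to $-\kappa\theta/d$. The cleanest way to arrange this is to take $f\approx a-\kappa\theta^2/(2d)$ inside $R$, so that $\kappa f$ contributes consistently. Integrating once more gives \eqref{eq:capef}. Hypothesis (H2) then shows $e_g(\theta)\le\kappa\theta/d$, so $|G|\le\rho\theta$ and $|F-a|\le\kappa\Theta^2/d$. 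Thus $T(X)\subset X$.

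The second step, which I expect to be the main obstacle, is proving that $T$ is a contraction. For this I would use the weighted norm $\|(f,g)\|=\max\bigl(\sup|f|,\ \sup|g(\theta)|/\theta\bigr)$, or an equivalent $\ell^2$-combination matching (H3). I would then compute Lipschitz bounds of $R$ in $f$ and in $g$ on $X$. The $f$-derivative is bounded by $\kappa+c_f\theta^2$, which is $L_f$. The $g$-derivative, after weighting, is bounded by $L_g\theta$; here the $(d-2)(\cot\theta-1/\theta)$ term and the derivatives of the $(p-2)$ rational term produce $\gamma_f$ and $\gamma_g$. Feeding these through the integral kernels gives a Lipschitz constant of roughly $\frac{\Theta}{d-1}\bigl(L_f^2+L_g^2\Theta^2\bigr)^{1/2}$ up to a factor of $2$. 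Hypothesis (H3) makes this constant less than $1$, and Banach's fixed point theorem gives existence and uniqueness in $X$.

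Finally, the $C^2$ regularity of the fixed point follows from the integral equation. Since $R$ is continuous at $\theta=0$ with value $-\kappa a$, L'Hôpital's rule gives $f''(0)=-\kappa a/(d-1)$, which should equal $-\kappa/d$ by \eqref{eq:fpp}. For continuity in $\lambda$, note that $T$ depends continuously on $\lambda$ and that (H1)–(H3) hold uniformly on the compact interval $\Lambda$, so the contraction is uniform. The standard parameter-dependent fixed point theorem then gives continuity of $\lambda\mapsto(f_\lambda,f'_\lambda)$.
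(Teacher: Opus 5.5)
Everything except the contraction step follows the paper's route: the same integral operator, the same set $X$ with $r_f=\kappa\Theta^2/d$ and $r_g=\rho$, the bound $|R+\kappa a|\le\kappa|f-a|+c_0\theta^2$, and invariance of $X$ via (H2). The gap is in the contraction step, which you rightly flag as the crux. In the norm you propose, $\max\bigl(\|f\|_\infty,\|g\|_w\bigr)$ with $\|g\|_w=\sup_\theta|g(\theta)|/\theta$, the map $T$ is \emph{not} a contraction. Integrating the Lipschitz bound gives $|\delta G(\theta)|/\theta\le\Lip(R)\bigl(\|\delta f\|_\infty/(d-1)+\theta\|\delta g\|_w/d\bigr)$. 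So the coupling $\delta f\mapsto\delta G$ has gain about $\Lip(R)/(d-1)\ge\kappa/(d-1)$, with no factor of $\Theta$, and $\kappa\approx 2d>d-1$ in the regime of interest. Concretely, take $p=2$, $\lambda=2$, so that $R=(d-2)(\frac1\theta-\cot\theta)g-2df$, and $\Theta$ small enough for (H1)--(H3). Let $(f_1,g_1)=(a,0)$ and $(f_2,g_2)=(a+\phi,0)$, where $\phi$ ramps quickly from $0$ to $r_f$. Both pairs lie in $X$ and their distance is $r_f$. But $\|\delta G\|_w\ge|\delta G(\Theta)|/\Theta$, which tends to $\frac{2d}{d-1}r_f>r_f$ as the ramp steepens. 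So the constant $\frac{\Theta}{d-1}(L_f^2+L_g^2\Theta^2)^{1/2}$ you quote does not follow from your norm. Appealing to an ``equivalent'' norm does not rescue this: being a contraction is not preserved under norm equivalence, so you must choose the right norm.

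The missing idea is to weight the derivative component by $\Theta$, using $N(f,g)=\|f\|_\infty+\Theta\|g\|_w$ as the paper does. Then $\|\delta F\|_\infty\le\frac{\Theta^2}{2}\|\delta G\|_w$, and hence $N(\delta F,\delta G)\le\bigl(\frac{\Theta^2}{2}+\Theta\bigr)\Lip(R)\bigl(\frac{\|\delta f\|_\infty}{d-1}+\frac{\Theta\|\delta g\|_w}{d}\bigr)\le\frac{2\Theta\Lip(R)}{d-1}N(\delta f,\delta g)$. Since $\Lip(R)\le(L_f^2+L_g^2\Theta^2)^{1/2}$, this factor is $<1$ exactly by (H3). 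That Lipschitz bound is valid because each $\theta$-slice of $X$ is a rectangle, hence convex.

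There are also two smaller points. First, $a/(d-1)=1/d$ holds identically since $a=1-\frac1d$. So the leading term $-\kappa a\theta/(d-1)$ needs no ``correction'': substituting $f\approx a-\kappa\theta^2/(2d)$ inside $R$ is neither needed nor what yields \eqref{eq:capeg}, and $f''(0)=-\kappa/d$ is automatic. Second, for continuity in $\lambda$ the parametrized contraction principle does not apply verbatim, because $X$ depends on $\lambda$ through $\kappa$. The paper instead places all solutions in a common region with $\rho_*=2\max_\Lambda\kappa/d$. There it obtains a $\lambda$-uniform Lipschitz constant and sup-norm continuity of $\lambda\mapsto R_\lambda$, writing $g=\theta z$ to control $\theta\to0$, and concludes with Gronwall.
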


The conditions \textup{(C1)--(C6)} below encode the remaining steps of the
computer-assisted proof: \textup{(C1)} verifies the hypotheses needed for the
singular-endpoint enclosure, \textup{(C2)} certifies the sign change of the
shooting function at the endpoints of the eigenvalue interval, and
\textup{(C3)} propagates the whole parameter box to $\theta=\pi/2$ while
recording the bounds needed for the defect. Condition \textup{(C4)} ensures
$1<\lambda<2$, \textup{(C5)} certifies that the mean-value defect is nonzero,
and \textup{(C6)} verifies that the exponent boxes cover the desired range of
$p$.
\begin{remark}\label{rem:capdata}
Fix $\theta_0\in(0,\Theta]$. For every $(p,\lambda)$ in a box $P\times\Lambda$ on which \textup{(H1)--(H3)} hold, with $\kappa$, $e_f$ and $e_g$ evaluated in interval arithmetic over that box, the lemma provides the enclosures
\[
f_\lambda(\theta_0)\in a-\frac{\kappa\theta_0^2}{2d}
+[-e_f(\theta_0),e_f(\theta_0)]
\ \ \text{and} \ \
f'_\lambda(\theta_0)\in-\frac{\kappa\theta_0}{d}
+[-e_g(\theta_0),e_g(\theta_0)],
\]
together with $\max_{[0,\theta_0]}f_\lambda=a$, the lower bound $f_\lambda\geq a-\frac{\kappa\theta_0^2}{2d}-e_f(\theta_0)$ on $[0,\theta_0]$, and, using $t(1-t^2/6)\leq\sin t\leq t$,
\[
\int_0^{\theta_0}\sin^{d-2}t\,f_\lambda(t)\dt\;\in\;
\Big[a-\frac\kappa{2d}[0,\theta_0^2]+[-e_f(\theta_0),e_f(\theta_0)]\Big]
\cdot\frac{\theta_0^{d-1}}{d-1}
\Big[\big(1-\tfrac{\theta_0^2}6\big)^{d-2},\,1\Big].
\]
We write $B(L)$, for a compact interval $L\subset\Lambda$, for the resulting box of initial data at $\tau=\tau_0$ for the system \eqref{eq:capsystem}, evaluated over $P\times L$, with the $\Jc$ component initialized to the last enclosure above, $\ell\in L$ and $q\in P$.
\end{remark}

We can now describe how the computer assisted proof works. Fix $d$, $P$, $\Lambda$ and $\theta_0=\frac\pi2\tau_0$ with $\tau_0\in\{2^{-10},2^{-12},2^{-14}\}$. Although $\tau_0$ is exactly representable, $\frac\pi2$ is not, so the program holds $\theta_0$ only through an enclosure; we write $\Theta$ for the upper endpoint of that enclosure, a double precision number with $\theta_0\leq\Theta\leq\frac12$. The computer program verifies the following claims, all evaluated in outward-rounded interval arithmetic.
\begin{enumerate}
\item[(C1)] $\kappa_p(\lambda)>0$ for all $(p,\lambda)\in P\times\Lambda$, and the hypotheses (H1)--(H3) of Lemma \ref{lem:pole} hold with this $\Theta$ for every $(p,\lambda)$ in each of the three sets $P\times\{\lambda_1\}$, $P\times\{\lambda_2\}$ and $P\times\Lambda$.
\item[(C2)] For $j=1,2$: every solution of \eqref{eq:capsystem} with initial data in $B(\{\lambda_j\})$ at $\tau=\tau_0$ exists on $[\tau_0,1]$, and the enclosure $G_j\ni g(1)$ computed for it satisfies $\sup G_1<0<\inf G_2$.
\item[(C3)] Every solution of \eqref{eq:capsystem} with initial data in $B(\Lambda)$ at $\tau=\tau_0$ exists on $[\tau_0,1]$; the denominator $(q-1)g^2+\ell^2f^2$ does not vanish along the computed enclosure tube; and the computation outputs an interval $F_{\mathrm{end}}\ni f(1)$ with $\sup F_{\mathrm{end}}<0$, an interval $\Jc_{\mathrm{end}}\ni\Jc(1)$, and bounds $F_{\mathrm{lo}}\leq f\leq F_{\mathrm{hi}}$ valid on $[\tau_0,1]$.
\item[(C4)] $\Lambda\subset(1,2)$.
\item[(C5)] With the quantities of (C1)--(C3) and Remark \ref{rem:capdata}, the interval
\[
\mathbf{C}:=\frac{P-2}2\Big(\big[a,\max(a,F_{\mathrm{hi}})\big]
+\big[\min(F_{\mathrm{loc}},F_{\mathrm{lo}}),\,\sup F_{\mathrm{end}}\big]\Big)
+\frac{d(d+2)}{\Lambda+d}\cdot2\sigma_d\,\Jc_{\mathrm{end}}
\]
satisfies $0\notin\mathbf{C}$. Here $F_{\mathrm{loc}}$ is the lower bound for $f$ on $[0,\theta_0]$ from Remark \ref{rem:capdata}, and $\sigma_d$ is enclosed by the recurrence $\sigma_2=\frac1\pi$, $\sigma_3=\frac12$, $\sigma_{k+2}=\frac k{k-1}\sigma_k$ evaluated in interval arithmetic, which for even $d$ contains an enclosure of $\frac1\pi$ rather than a rational.
\end{enumerate}
Claims (C1)--(C5) refer to a single box $P\times\Lambda$. A range $[p_-,p_+]$ is covered by finitely many such boxes, and we require in addition
\begin{enumerate}
\item[(C6)] the exponent boxes $P_1,\dots,P_N$ for which (C1)--(C5) have been verified satisfy $[p_-,p_+]\subset\bigcup_{i=1}^NP_i$, and the sign of $\mathbf{C}$ in (C5) is the same for every $i$.
\end{enumerate}

Existence on $[\tau_0,1]$ together with an enclosure, as asserted in (C2) and (C3), is exactly what the validated integrator of Section \ref{sec:cap1} provides. The nonvanishing of the denominator comes for free: were the interval encountered in an evaluation of the vector field to contain $0$, the interval division would fail and the program would abort, so successful completion certifies it. The code tries three successively smaller values of $\tau_0$ in case refinement is needed; only one run needs to be successful. 

The proof now follows from the previous results and remarks, which we collect here for the reader’s convenience.
\begin{proof}[Proof of Theorem \ref{thm:cap} assuming \textup{(C1)--(C6)}]
Let $p\in[p_-,p_+]$ be given. The proof has four steps:

\emph{Step 1: Reduction to one box.} By (C6) there is an $i$ with $p\in P_i$, and we work with the box $P=P_i$ and its $\Lambda$. The exponent $p$ is now \emph{fixed} for the rest of the argument, and the enclosures of (C1)--(C5), being valid for every exponent in $P$, are in particular valid for $p$.

\emph{Step 2: Establish an eigenvalue in $\Lambda$.} The two sign conditions of (C2) concern only the endpoints of $\Lambda$, and we must produce from them an eigenvalue in its interior. By (C1) and Lemma \ref{lem:pole}, for every $\lambda\in\Lambda$ there is a solution $f_\lambda\in C^2([0,\theta_0])$ of \eqref{eq:shootingODE} with $f_\lambda(0)=a$ and $f'_\lambda(0)=0$, whose data at $\theta_0$ lie in $B(\{\lambda\})$ and depend continuously on $\lambda$; here the continuity conclusion of Lemma \ref{lem:pole} is used with $p$ held fixed, so that no continuity in $p$ is needed. By (C3) each of these solutions extends to $[\theta_0,\frac\pi2]$ inside a compact tube on which the right hand side of \eqref{eq:shootingODE} is smooth, so classical continuous dependence on initial data makes the shooting function $h(\lambda):=f'_\lambda(\frac\pi2)$ continuous on $\Lambda$. Now (C2) reads $h(\lambda_1)<0<h(\lambda_2)$, and the intermediate value theorem provides $\lambda^*\in(\lambda_1,\lambda_2)$ with $h(\lambda^*)=0$.

\emph{Step 3: From the profile to a counterexample.} The profile $f=f_{\lambda^*}$ lies in $C^2([0,\frac\pi2])$, by Lemma \ref{lem:pole} on $[0,\theta_0]$ and by the ODE beyond, and satisfies the hypotheses of Proposition \ref{prop:profile} by (C3), (C4) and Lemma \ref{lem:pole} together with the choice of $\lambda^*$. The proposition yields a $p$-harmonic $u\in C^{1,\lambda^*-1}_{loc}(\R^d)\cap C^\infty(\R^d\setminus\{0\})$ for which \eqref{eq:capdefect} holds.

\emph{Step 4: Nonzero defect.} It remains to see that $\C(p)$ lies in the interval $\mathbf{C}$ of (C5), whose three terms are enclosures of the three quantities entering \eqref{eq:capCf}. Lemma \ref{lem:pole} gives $\max f=a$ and $f\geq F_{\mathrm{loc}}$ on $[0,\theta_0]$, while (C3) gives $F_{\mathrm{lo}}\leq f\leq F_{\mathrm{hi}}$ beyond it, so that $\max_{[0,\pi/2]}f\in[a,\max(a,F_{\mathrm{hi}})]$; and $\min_{[0,\pi/2]}f\leq f(\frac\pi2)\leq\sup F_{\mathrm{end}}$ puts $\min_{[0,\pi/2]}f$ in the second bracket. The integral is $\int_0^{\pi/2}\sin^{d-2}\theta\, f\, d\theta=\Jc_{\lambda^*}(1)\in\Jc_{\mathrm{end}}$ by Remark \ref{rem:capdata}, and $\lambda^*\in\Lambda$. Hence $\C(p)\in\mathbf{C}$, so that $\C(p)\neq0$ with the sign of $\mathbf{C}$, which by (C6) is the same for every box and is the sign recorded in Table \ref{tab:cap}. As $p\in[p_-,p_+]$ was arbitrary, the theorem follows.
\end{proof}

\begin{remark}
Nothing in the method restricts it to the tabulated ranges, and the cost is proportional to the number of boxes. We have not, however, established that any prescribed extension of the ranges, or narrowing of the gap around $p^*_d$, can be achieved by further computation alone: whether a box certifies depends on the widths that (C2) and (C5) will tolerate, and we know of no bound guaranteeing success in advance. As a check, the certified enclosures contain the non-rigorous values computed in Section \ref{sec:odered}, and the enclosures of $\lambda^*$ in Table \ref{tab:cap} exhibit the decay of $\lambda^*(p)$ towards the Aronsson exponent $\lambda(\infty)=\frac43$ \cite{aronsson84}.
\end{remark}

\subsection{Lifting counterexamples}
\label{sec:lifting}

The problem that was explained around Figure~\ref{fig:numerics}, is that there are exceptional values of $p$ which cannot be covered by the current counterexamples. However, adding dummy variables and lifting our examples to higher dimensions changes this value of $p$ and allows us to cover those values as well.

To this end, let $d > m\geq 3$. 
Given $u:\R^m\to \R$ we define the \emph{cylindrical lift} $u_\md:\R^d \to \R$ by
\begin{equation}\label{eq:md}
u_\md(y,z) = u(y) \ \ \text{where} \ \ y\in \R^m, z\in \R^{d-m}.
\end{equation}
If $u$ is a counterexample for $p>2$ in $\R^m$ with mean-value defect $\C(p)$, then we shall see that $u_\md$ is (sometimes) also a counterexample in $\R^d$ for the same value of $p$, but with different mean-value defect $\C_\md(p)$. For this, we require the following technical result, which involves the constant
\begin{equation}\label{eq:H}
\H_\md(\lambda) = \frac{\Gamma(\tfrac d2)\Gamma(\tfrac{m+\lambda}2)}{\Gamma(\tfrac m2)\Gamma(\tfrac{d+\lambda}2)}.
\end{equation}
We will also begin to use the notation $B^d_\eps$ for the ball of radius $\eps$ in $\R^d$ centered at the origin.
\begin{proposition}\label{prop:mdavg}
Let $\lambda>0$, let $v$ be continuous on $\S^{m-1}$, assume that $u:\R^m \to \R$ has the form $u(y) = |y|^\lambda v(\tfrac y{|y|})$, and define $u_\md$ by \eqref{eq:md}. Then
\begin{equation}\label{eq:mdavg}
\dashint_{B^d_\eps} u_\md dx = \frac{\eps^\lambda d}{d+\lambda} \H_\md(\lambda) \dashint_{\S^{m-1}} v\, dS.
\end{equation}
\end{proposition}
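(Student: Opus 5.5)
Since $u_\md$ is homogeneous of degree $\lambda$ in $x=(y,z)$ (it equals $|y|^\lambda v(y/|y|)$, which is $\lambda$-homogeneous in $y$ and constant in $z$), we first reduce the ball average to a sphere average. Writing $x=s\eta$ with $\eta\in\S^{d-1}$, we have
\[
\int_{B^d_\eps}u_\md\,dx=\int_0^\eps s^{\lambda+d-1}\,ds\int_{\S^{d-1}}u_\md(\eta)\,dS(\eta)=\frac{\eps^{\lambda+d}}{\lambda+d}\int_{\S^{d-1}}u_\md\,dS .
\]
Dividing by $|B^d_\eps|=\eps^d|\S^{d-1}|/d$ gives
\[
\dashint_{B^d_\eps}u_\md\,dx=\frac{\eps^\lambda d}{d+\lambda}\,\dashint_{\S^{d-1}}u_\md\,dS .
\]
The statement therefore reduces to the identity
\[
\dashint_{\S^{d-1}}|y|^\lambda v\!\left(\tfrac{y}{|y|}\right)dS=\H_\md(\lambda)\dashint_{\S^{m-1}}v\,dS .
\]

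To prove this identity, we avoid parametrizing the sphere directly and use a Gaussian integral trick. Let $\gamma_k$ denote the standard Gaussian measure on $\R^k$. For any $\lambda$-homogeneous $w$ on $\R^k$, polar coordinates give
\[
\int_{\R^k} w\,d\gamma_k=\mathbb{E}|X|^\lambda\cdot\dashint_{\S^{k-1}}w\,dS,
\qquad
\mathbb{E}|X|^\lambda=\frac{2^{\lambda/2}\Gamma(\tfrac{k+\lambda}2)}{\Gamma(\tfrac k2)},
\]
where $X\sim\gamma_k$. We apply this formula twice:
\begin{enumerate}
\item with $k=d$ and $w=u_\md$;
\item with $k=m$ and $w=u$.
\end{enumerate}
Since $u_\md$ does not depend on $z$ and $\gamma_d=\gamma_m\otimes\gamma_{d-m}$, Fubini gives $\int_{\R^d}u_\md\,d\gamma_d=\int_{\R^m}u\,d\gamma_m$. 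Equating the two expressions yields
\[
\frac{\Gamma(\tfrac{d+\lambda}2)}{\Gamma(\tfrac d2)}\dashint_{\S^{d-1}}u_\md\,dS=\frac{\Gamma(\tfrac{m+\lambda}2)}{\Gamma(\tfrac m2)}\dashint_{\S^{m-1}}v\,dS .
\]
This is exactly the identity with the constant $\H_\md(\lambda)$ from \eqref{eq:H}.

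The only step requiring care is justifying the Gaussian moments and Fubini. We need $u$ continuous away from the origin and bounded by $C|y|^\lambda$, which makes it integrable against the Gaussian since $\lambda>0$. The moment formula itself is the standard substitution $t=s^2/2$ in the Gamma integral. I expect no real obstacle here.

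An alternative route would disintegrate $\S^{d-1}$ by $|y|=\cos\phi$, producing a Beta integral $\int_0^{\pi/2}\cos^{\lambda+m-1}\phi\,\sin^{d-m-1}\phi\,d\phi$. The Gaussian route sidesteps this and checks the constants automatically.
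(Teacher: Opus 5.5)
Your proof is correct, and it takes a genuinely different route from the paper's.

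The paper slices the ball $B^d_\eps$ directly along the $z$-fibers, which are $(d-m)$-dimensional balls of radius $\sqrt{\eps^2-|y|^2}$. This turns the integral into $|B^{d-m}_1|\int_{B^m_\eps}(\eps^2-|y|^2)^{(d-m)/2}u(y)\,dy$. Polar coordinates in $\R^m$ then produce the Beta integral $\int_0^1 r^{m+\lambda-1}(1-r^2)^{(d-m)/2}\,dr$, and the constant is assembled from the volume formulas for $B^{d-m}_1$, $\S^{m-1}$ and $B^d_1$.

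You instead use the $\lambda$-homogeneity of $u_\md$ on all of $\R^d$ to split off the factor $d/(d+\lambda)$ first. Only the sphere identity $\dashint_{\S^{d-1}}u_\md\,dS=\H_\md(\lambda)\dashint_{\S^{m-1}}v\,dS$ then remains, and you prove it by integrating against the standard Gaussian.

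Both arguments are Fubini over $\R^m\times\R^{d-m}$; the difference is the weight. The indicator of the ball does not factor, which is exactly what generates $(\eps^2-|y|^2)^{(d-m)/2}$ and the Beta function. The Gaussian does factor, so the $z$-integral is trivially $1$ and $\H_\md(\lambda)$ appears at once as the ratio of the radial moments $\Gamma(\tfrac{m+\lambda}2)/\Gamma(\tfrac m2)$ and $\Gamma(\tfrac{d+\lambda}2)/\Gamma(\tfrac d2)$, with no volume bookkeeping.

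Your route also separates the two sources of the constant cleanly: $d/(d+\lambda)$ comes only from homogeneity, and $\H_\md(\lambda)$ only from the change of dimension. Taking $v\equiv1$ in your sphere identity shows $\H_\md(\lambda)=\dashint_{\S^{d-1}}|y|^\lambda\,dS$. The paper's computation is more elementary and works with the ball average of $\M_\eps$ directly.

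Your integrability remarks suffice for both the polar-coordinate step and Fubini. These are the bound $|u_\md(x)|\le\|v\|_\infty|x|^\lambda$ and the fact that $\{y=0\}\cap\S^{d-1}$ is a null set.
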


The proof of Proposition \ref{prop:mdavg} is given in Appendix \ref{sec:appendix_cap}. Now, for any $u:\R^m\to \R$ of the form $u(y) = |y|^\lambda v(\tfrac y{|y|})$ with $\lambda>0$ and with $v$ taking both signs, so that the extreme values of $u_\md$ over $\bar B^d_\eps$ are attained on the boundary, we can use Proposition \ref{prop:mdavg} and follow the proof of Theorem \ref{thm:main} to show that \[\M_\eps u_\md(0) = \frac{\C_\md(p)\eps^\lambda}{p + d},\] where
\begin{equation}\label{eq:Cpmd}
\C_\md(p) := \frac{p-2}{2}\left( \max_{\omega\in \S^{m-1}}v(\omega) + \min_{\omega\in \S^{m-1}}v(\omega)\right) + \frac{d(d+2)\H_\md(\lambda)}{\lambda + d}\, \dashint_{\S^{m-1}}v(\omega) \, dS(\omega).
\end{equation}
If we take $u=u_p$ to be the counterexample in $\R^m$ with $\lambda=\lambda(p)$, then $u_\md$ is a counterexample in $\R^d$ provided $\C_\md(p)\neq 0$. Indeed, it is straightforward to check that $u_\md$ is also $p$-harmonic and homogeneous with the same exponent $\lambda$. The two constants differ only in the coefficient of the spherical average, which changes from $m(m+2)/(m+\lambda)$ to $d(d+2)\H_\md(\lambda)/(d+\lambda)$; the extreme values are those of the same $v$ on $\S^{m-1}$. The possible effect is to shift the root, so that $\C_\md(p)\neq 0$ when $\C(p)=0$, thus potentially filling in the gaps in $p$ observed in the numerical experiments (at least for $d \geq 4$).

Before giving computer assisted proofs, we present non-rigorous numerical results for the lifting from $\R^3$ to $\R^d$ with $d>3$. The profiles $\C_{3\to d}(p)$ have the same shape as those of $\C(p)$ shown in Figure \ref{fig:numerics}. The values of the zeros of $\C_{3\to d}(p)$, which we denote by $p^*_\hd$, up to 2 decimal places for $4 \leq d\leq 10$ are $p^*_{3\mapsto 4}\approx  24.09$, $p^*_{3\mapsto 5}\approx  36.19$, $p^*_{3\mapsto 6}\approx  54.45$, $p^*_{3\mapsto 7}\approx  82.91$, $p^*_{3\mapsto 8}\approx  129.6$, $p^*_{3\mapsto 9}\approx  212.5$, and $p^*_{3\mapsto 10}\approx 379.9$. Since it appears that $p^*_{3\mapsto d} \neq p^*_d$ for the numerically computed examples up to $d=10$, the lifted counterexamples potentially fill in the gap where $\C(p^*_d)=0$. The only case we cannot handle this way is $d=3$, since the planar $d=2$ examples do not provide counterexamples when lifted. Indeed, at planar critical points the midrange and disk average
separately equal $u(x)+o(\eps^2)$ \cite{arroyo2016asymptotic}.
These estimates persist under lifting: the extrema are unchanged,
and the average is handled by slicing into planar disks. Thus, the case of $d=3$ and $p=p^*_3$ will require a different technique for constructing a counterexample. 

Now, the certified computation of Section \ref{sec:cap1} carries over
to the lift with just a single change. The eigenvalue problem is the
one of the source dimension $m$, so the ODE, its pole enclosure and
the eigenvalue trapping are untouched; only the defect is computed in
the target dimension $d$.

We first record the analogue of Proposition \ref{prop:profile}, proved
alongside it in Appendix \ref{sec:appendix_cap}. There we show that the
cylindrical lift remains $p$-harmonic because the function is constant
in the added variables, while the viscosity condition still holds
trivially along the enlarged singular set.

\begin{proposition}\label{prop:liftprofile}
Let $3\leq m<d$, and let $p$, $\lambda$, $f$, $v$ and $u$ be as in Proposition \ref{prop:profile} with $m$ in place of $d$. Then the cylindrical lift $u_\md$ of \eqref{eq:md} is $p$-harmonic on $\R^d$, belongs to $C^{1,\lambda-1}_{loc}(\R^d)$, is positively homogeneous of degree $\lambda$, satisfies $u_\md(0)=0$ and $\nabla u_\md(0)=0$, and \eqref{eq:capdefect} holds with $\C(p)$ replaced by $\C_\md(p)$ of \eqref{eq:Cpmd}.
\end{proposition}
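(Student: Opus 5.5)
The plan is to start from Proposition \ref{prop:profile} applied in dimension $m$. It gives that $u(y)=|y|^\lambda v(y/|y|)$ is $p$-harmonic on $\R^m$, lies in $C^{1,\lambda-1}_{loc}(\R^m)\cap C^\infty(\R^m\setminus\{0\})$, and has $u(0)=0$, $\nabla u(0)=0$. It also gives $|\nabla u|>0$ off the origin, since by \eqref{eq:capnondeg} we have $|\nabla u(y)|^2=|y|^{2\lambda-2}(f'^2+\lambda^2 f^2)$. We then transfer each property to the lift $u_\md(y,z)=u(y)$ in turn.

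Homogeneity, $u_\md(0)=0$, $\nabla u_\md(0)=0$ and the $C^{1,\lambda-1}_{loc}$ regularity are immediate, because these are properties of $u$ composed with the linear projection $(y,z)\mapsto y$. Next we check $p$-harmonicity off the singular set $\{y=0\}=\{0\}\times\R^{d-m}$. There $u_\md$ is smooth and
\[
\nabla u_\md=(\nabla u,0),\qquad D^2u_\md=\begin{pmatrix}D^2u&0\\0&0\end{pmatrix}.
\]
Hence $\Delta u_\md=\Delta u$ and $\Delta_\infty^N u_\md=\Delta_\infty^N u$, so $\Delta_p^N u_\md(y,z)=\Delta_p^Nu(y)=0$. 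The gradient is nonzero there, so $u_\md$ is a classical, hence viscosity, solution away from $\{y=0\}$.

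The main obstacle is the viscosity condition at points $(0,z_0)$ of the singular set. We follow the argument in the proof of Theorem \ref{thm:existence} and show that no $C^2$ test function $\phi$ can touch $u_\md$ from above or below at $(0,z_0)$. Suppose $\phi$ touches from above. Restricting to the slice $z=z_0$, we get $\phi(y,z_0)\ge u(y)$ with equality at $y=0$. By the choice of $f$, $v$ takes the positive value $f(0)>0$ and the negative value $f(\tfrac\pi2)<0$. Taylor expanding, $\phi(y,z_0)=\phi(0,z_0)+\nabla_y\phi\cdot y+O(|y|^2)$. Comparing with $u(y)=|y|^\lambda v$ along the rays $\pm t\omega$ through a direction $\omega$ where $v(\omega)>0$ forces $\nabla_y\phi(0,z_0)\cdot\omega\ge0$ from one side and $\le0$ from the other, because $\lambda<2$. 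So $\nabla_y\phi=0$, and then $t^\lambda v(\omega)\le O(t^2)$, which contradicts $\lambda<2$. Touching from below is handled symmetrically using a direction where $v<0$. Thus the viscosity inequalities hold vacuously at the singular set, and $u_\md$ is $p$-harmonic on $\R^d$. We may cite the equivalence with weak solutions \cite{juutinen2001equivalence} if a weak formulation is preferred.

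Finally we compute the defect. Since $v$ changes sign and $\lambda>0$, the maximum and minimum of $u_\md$ over $\bar B^d_\eps$ are $\eps^\lambda\max v$ and $\eps^\lambda\min v$. Indeed, $u_\md(y,z)=|y|^\lambda v(\cdot)$ is maximized by taking $|y|=\eps$, $z=0$, and the extrema of $v$ on $\S^{m-1}$ are the max and min of $f$ on $[0,\pi/2]$ by evenness. Proposition \ref{prop:mdavg} gives the ball average. Inserting both into $\M_\eps$ with $\alpha=\frac{p-2}{p+d}$ and $\beta=\frac{d+2}{p+d}$ yields $\M_\eps u_\md(0)=\C_\md(p)\eps^\lambda/(p+d)$, with $\C_\md(p)$ as in \eqref{eq:Cpmd}. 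This is \eqref{eq:capdefect} with $\C(p)$ replaced by $\C_\md(p)$.
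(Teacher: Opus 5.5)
Your proposal is correct and follows the paper's proof essentially step by step: you verify the equation classically off $\{y=0\}$, restrict to the slice $z=z_0$ to rule out touching test functions on the singular subspace, and compute the defect from the boundary extrema together with Proposition~\ref{prop:mdavg}. One small slip: the step forcing $\nabla_y\phi(0,z_0)=0$ rests on $\lambda>1$ (equivalently $\nabla u(0)=0$), not on $\lambda<2$, which is needed only for the final contradiction $t^\lambda v(\omega)\le O(t^2)$.
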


We can now prove our main computer assisted result for cylindrical lifts. 
\begin{table}[!t]
\centering
\begin{tabular}{cccrccc}
\toprule
$m$ & $d$ & range of $p$ & boxes & $\lambda^*(p)\in$ &
bound on $\C_\md(p)$ & sign \\
\midrule
$3$ & $4$ & $[24.25,\, 24.36]$ & $56$ & $[1.4109,\, 1.4114]$ & $\C_{3\to 4}(p)\geq 4.33\times 10^{-5}$ & $+$ \\
$3$ & $5$ & $[32.2,\, 32.5]$ & $60$ & $[1.3945,\, 1.3957]$ & $\C_{3\to 5}(p)\leq -0.00267$ & $-$ \\
$4$ & $6$ & $[40.24,\, 40.36]$ & $61$ & $[1.3990,\, 1.3998]$ & $\C_{4\to 6}(p)\leq -0.0104$ & $-$ \\
$4$ & $8$ & $[56.42,\, 56.54]$ & $61$ & $[1.3839,\, 1.3846]$ & $\C_{4\to 8}(p)\leq -0.0481$ & $-$ \\
$4$ & $10$ & $[69.5,\, 76.5]$ & $351$ & $[1.3731,\, 1.3769]$ & $\C_{4\to 10}(p)\leq -0.0835$ & $-$ \\
$5$ & $7$ & $[48.28,\, 48.4]$ & $61$ & $[1.4012,\, 1.4020]$ & $\C_{5\to 7}(p)\leq -0.0207$ & $-$ \\
$5$ & $9$ & $[64.67,\, 64.79]$ & $62$ & $[1.3880,\, 1.3888]$ & $\C_{5\to 9}(p)\leq -0.0695$ & $-$ \\
\bottomrule
\end{tabular}
\caption{Certified ranges for Theorem \ref{thm:caplift}, and the counterexamples obtained by lifting from $\R^m$ to $\R^d$. The eigenvalue enclosures are those of the source dimension $m$, and the defect is $\C_\md$ of \eqref{eq:Cpmd}. Each range covers the corresponding gap of Table \ref{tab:cap} with room to spare, which is what yields Corollary \ref{cor:capgapfree}. 
All numbers are rounded outward.}
\label{tab:caplift}
\end{table}

\begin{theorem}\label{thm:caplift}
Let $m$, $d$ and $[p_-,p_+]$ be any of the triples listed in Table \ref{tab:caplift}. Then for \emph{every} $p\in[p_-,p_+]$ there exist $\lambda\in(1,2)$ and a $p$-harmonic function $u\in C^{1,\lambda-1}_{loc}(\R^d)$ such that \eqref{eq:pointwise} fails at $x=0$.
\end{theorem}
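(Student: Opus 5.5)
The proof will mirror that of Theorem \ref{thm:cap}. The only change is that the defect is evaluated in the target dimension $d$ through \eqref{eq:Cpmd} instead of \eqref{eq:capCf}. We fix a triple $(m,d,[p_-,p_+])$ from Table \ref{tab:caplift} and work entirely with the ODE \eqref{eq:shootingODE} of the source dimension $m$; that is, we replace $d$ by $m$ in \eqref{eq:capsystem}, in $a=1-\frac1m$, in $\kappa$, and in Lemma \ref{lem:pole}. The program then certifies a modified list of conditions. Conditions (C1)--(C4) and (C6) are verified exactly as before in dimension $m$. Condition (C5) is replaced by (C5$'$): the interval
\[
\mathbf{C}_\md:=\frac{P-2}2\Big(\big[a,\max(a,F_{\mathrm{hi}})\big]+\big[\min(F_{\mathrm{loc}},F_{\mathrm{lo}}),\sup F_{\mathrm{end}}\big]\Big)+\frac{d(d+2)\H_\md(\Lambda)}{\Lambda+d}\cdot 2\sigma_m\,\Jc_{\mathrm{end}}
\]
excludes $0$, with the same sign on every box. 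The Gamma-function ratio $\H_\md(\Lambda)$ has to be enclosed in interval arithmetic.

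Given $p\in[p_-,p_+]$, Steps 1 and 2 of the proof of Theorem \ref{thm:cap} are repeated verbatim in dimension $m$. Continuity of $\lambda\mapsto f_\lambda(\theta_0)$ comes from Lemma \ref{lem:pole}, and continuous dependence beyond $\theta_0$ on the certified tube gives continuity of the shooting function $h(\lambda)=f'_\lambda(\frac\pi2)$. The sign change from (C2) then produces $\lambda^*\in\Lambda\subset(1,2)$ with $h(\lambda^*)=0$. The resulting profile satisfies the hypotheses of Proposition \ref{prop:profile} with $m$ in place of $d$. Proposition \ref{prop:liftprofile} then shows that the lift $u_\md$ is $p$-harmonic on $\R^d$, lies in $C^{1,\lambda^*-1}_{loc}(\R^d)$, vanishes together with its gradient at the origin, and satisfies
\[
\M_\eps u_\md(0)-u_\md(0)=\frac{\C_\md(p)}{p+d}\eps^{\lambda^*}.
\]

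Next we check that $\C_\md(p)\in\mathbf{C}_\md$. The max and min bracket terms are exactly as in Step 4 of the proof of Theorem \ref{thm:cap}, because the extrema of $v$ on $\S^{m-1}$ are those of $f$ on $[0,\frac\pi2]$ by evenness. The spherical average is rewritten as $2\sigma_m\Jc_{\lambda^*}(1)$ by \eqref{eq:avgsph} in dimension $m$, and $\Jc_{\lambda^*}(1)\in\Jc_{\mathrm{end}}$. Since $\lambda^*\in\Lambda$ and the map $\lambda\mapsto d(d+2)\H_\md(\lambda)/(\lambda+d)$ is continuous, its value at $\lambda^*$ lies in the interval enclosure over $\Lambda$. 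Hence $\C_\md(p)\neq0$. Because $\lambda^*<2$,
\[
\eps^{-2}\big(\M_\eps u_\md(0)-u_\md(0)\big)\to\pm\infty,
\]
so \eqref{eq:pointwise} fails at the origin.

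The main obstacle is the computational step (C5$'$). The boxes lie near $p\approx 24$--$76$, where $\C$ in the source dimension is of moderate size, but the lifted defect must stay bounded away from zero uniformly on each box. This requires tight enclosures of $\Jc_{\mathrm{end}}$ and a rigorous enclosure of $\H_\md$ on $\Lambda$. For the latter we would use the recurrence $\Gamma(x+1)=x\Gamma(x)$ to reduce to a monotone ratio that can be evaluated at the endpoints of $\Lambda$. The tables report that this succeeds with the listed box counts. The mathematical argument itself adds nothing beyond Proposition \ref{prop:liftprofile} and the proof of Theorem \ref{thm:cap}.
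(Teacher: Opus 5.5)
Your proposal is correct and follows the paper's argument. You verify (C1)--(C4) and (C6) in the source dimension $m$, replace (C5) by (C5$'$), and rerun the proof of Theorem \ref{thm:cap} with Proposition \ref{prop:liftprofile} and \eqref{eq:Cpmd} in place of Proposition \ref{prop:profile} and \eqref{eq:Cp}. One small imprecision: the recurrence $\Gamma(x+1)=x\Gamma(x)$ gives the rational form \eqref{eq:Hrational} only when $d-m$ is even, so for the $3\to4$ row the paper instead encloses $\H_{3\to4}$ with a validated log-Gamma at the endpoints of $\Lambda$, using that $\H_\md$ is decreasing in $\lambda$.
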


The function produced by Theorem \ref{thm:caplift} is the cylindrical lift \eqref{eq:md} of the counterexample in the smaller dimension $m$, and is therefore singular on the whole subspace $\{y=0\}$ rather than at the origin alone; this is why the conclusion $u\in C^\infty(\R^d\setminus\{0\})$ of Theorem \ref{thm:cap} is absent. Combining the two theorems removes the gaps in every dimension for which a lift is available.

In the proof of Theorem \ref{thm:caplift}, the verified claims are those of Section \ref{sec:cap1}, with the dimension $d$ there taken to be the source dimension $m$. Claims (C1)--(C4) and (C6) are unchanged, since they concern only the ODE and the covering of the range, and (C5) is replaced by
\begin{enumerate}
\item[(C5$'$)] with $a$, $F_{\mathrm{loc}}$, $F_{\mathrm{lo}}$, $F_{\mathrm{hi}}$, $F_{\mathrm{end}}$ and $\Jc_{\mathrm{end}}$ those of the source dimension $m$, the interval
\[
\mathbf{C}_\md:=\frac{P-2}2\Big(\big[a,\max(a,F_{\mathrm{hi}})\big]
+\big[\min(F_{\mathrm{loc}},F_{\mathrm{lo}}),\,\sup F_{\mathrm{end}}\big]\Big)
+\frac{d(d+2)\,\H_\md(\Lambda)}{\Lambda+d}\cdot2\sigma_m\,\Jc_{\mathrm{end}}
\]
satisfies $0\notin\mathbf{C}_\md$.
\end{enumerate}

The lifted defect of (C5$'$) costs nothing extra, the integration being the one for the source dimension $m$. When $d-m$ is even, applying $\Gamma(z+1)=z\Gamma(z)$ repeatedly to \eqref{eq:H} shows that
\begin{equation}\label{eq:Hrational}
\H_\md(\lambda)=\prod_{j=0}^{k-1}\frac{m+2j}{m+\lambda+2j},
\qquad k=\frac{d-m}2,
\end{equation}
a rational function that interval arithmetic evaluates without any enclosure of $\Gamma$ at non-integer arguments. Since $m\geq3$ is required, the only dimension for which a lift exists but no source of the right parity does is $d=4$, where $m=3$ is forced; there $\H_{3\to4}$ is enclosed instead from a validated logarithm of $\Gamma$, evaluated at the two endpoints of $\Lambda$ because $\H_\md$ is decreasing in $\lambda$.

Theorem \ref{thm:caplift} then follows from (C1)--(C4), (C5$'$) and (C6) by the argument that proves Theorem \ref{thm:cap}, with Proposition \ref{prop:liftprofile} in place of Proposition \ref{prop:profile} and \eqref{eq:Cpmd} in place of \eqref{eq:Cp}. Combining Theorem \ref{thm:cap} and Theorem \ref{thm:caplift} yields

\begin{corollary}\label{cor:capgapfree}
Let $d\in\{4,5,\dots,10\}$. For every $p\in[2.0000001,\,100]$ there is a $p$-harmonic $u\in C^{1,\gamma}_{loc}(\R^d)$ for which the pointwise expansion \eqref{eq:pointwise} fails at $x=0$. For $d=3$ the same holds for every $p\in[2.0000001,\,15.94257]\cup[15.94261,\,100]$.
\end{corollary}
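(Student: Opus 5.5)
The corollary is a covering argument: for each $d$ and each $p$ in the stated range, I will exhibit a certified row of Table \ref{tab:cap} or Table \ref{tab:caplift} whose $p$-range contains $p$. Theorem \ref{thm:cap} or Theorem \ref{thm:caplift} then produces a $p$-harmonic $u\in C^{1,\lambda-1}_{loc}(\R^d)$ with $\lambda\in(1,2)$ for which \eqref{eq:pointwise} fails at $0$. Setting $\gamma=\lambda-1\in(0,1)$ gives the regularity claim, and no further analysis is needed beyond checking interval bookkeeping.

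\textbf{The case $d=3$.} This is immediate. The two rows for $d=3$ in Table \ref{tab:cap} cover exactly $[2.0000001,15.94257]\cup[15.94261,100]$, and Theorem \ref{thm:cap} applies to each. No lift into $d=3$ is available, since $m\geq 3$ is required, so the gap is stated as an exception.

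\textbf{The cases $4\leq d\leq 10$.} For each $d$, Table \ref{tab:cap} gives a lower range $[2.0000001,p_1]$ and an upper range $[p_2,100]$. I then locate a row of Table \ref{tab:caplift} with target $d$ whose range $[p_-,p_+]$ satisfies $p_-\leq p_1$ and $p_+\geq p_2$, so that the union is all of $[2.0000001,100]$. Checking each dimension:
\begin{itemize}
\item[] $d=4$: the gap $(24.25,24.36)$ is covered by the $3\to4$ row $[24.25,24.36]$.
\item[] $d=5$: the gap $(32.28,32.38)$ is covered by the $3\to5$ row $[32.2,32.5]$.
\item[] $d=6$: the gap $(40.24,40.36)$ is covered by the $4\to6$ row $[40.24,40.36]$.
\item[] $d=7$: the gap $(48.28,48.4)$ is covered by the $5\to7$ row $[48.28,48.4]$.
\item[] $d=8$: the gap $(56.42,56.54)$ is covered by the $4\to8$ row $[56.42,56.54]$.
\item[] $d=9$: the gap $(64.67,64.79)$ is covered by the $5\to9$ row $[64.67,64.79]$.
\item[] $d=10$: the gap $(70,76)$ is covered by the $4\to10$ row $[69.5,76.5]$.
\end{itemize}
Because all the ranges are closed intervals, endpoints that coincide still produce a full cover. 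The sign of the defect need not agree across rows, since the corollary only requires $\C\neq0$ pointwise in $p$.

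\textbf{Main obstacle.} All the real work lies in the certified computations, namely conditions (C1)--(C6) and (C5$'$), together with Propositions \ref{prop:profile} and \ref{prop:liftprofile}, which are assumed. The only genuine check here is that the table endpoints, which are rounded outward, actually overlap rather than leave tiny uncovered slivers, for example at $d=4$, $6$, $7$, $8$ and $9$ where endpoints coincide exactly. I would confirm this by noting that Table \ref{tab:caplift} states each range covers the corresponding gap of Table \ref{tab:cap} "with room to spare."
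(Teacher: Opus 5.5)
Your proposal is correct and follows the paper, which obtains the corollary simply by combining Theorem \ref{thm:cap} with Theorem \ref{thm:caplift}; your dimension-by-dimension matching of each gap in Table \ref{tab:cap} with a row of Table \ref{tab:caplift} is the bookkeeping that combination relies on. The worry about uncovered slivers does not arise, because both theorems are stated for the closed table ranges themselves, so coinciding endpoints already give a full cover.
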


Theorem \ref{thm:main} covers an interval $(2,2+\delta)$ whose length is not quantified, and Corollary \ref{cor:capgapfree} covers everything above $2.0000001$; the two are complementary, but we do not claim that they overlap, which would require a lower bound on $\delta$ that the proof in Section \ref{sec:main} does not provide.

The lift does more than fill the gaps of Table \ref{tab:cap}. Because the coefficient of the average term in \eqref{eq:Cpmd} grows with the target dimension, a single certificate in a fixed source dimension controls \emph{every} higher target at once. The following lemma makes this precise. 

\begin{lemma}\label{lem:liftmono}
Let $m\geq3$, $p>2$ and $1<\lambda<2$, let $v$ be a profile in dimension $m$ as in Proposition \ref{prop:profile}, and set
\[
E=\frac{p-2}2\Big(\max_{\S^{m-1}}v+\min_{\S^{m-1}}v\Big)
\ \ \text{and} \ \
B=\dashint_{\S^{m-1}}v\, dS,
\]
so that \eqref{eq:Cpmd} reads $\C_\md(p)=E+K_{m,d}(\lambda)B$ with
\begin{equation}\label{eq:liftcoef}
K_{m,d}(\lambda)=\frac{d(d+2)\H_\md(\lambda)}{\lambda+d}
=\frac{2\,\Gamma(\tfrac{m+\lambda}2)}{\Gamma(\tfrac m2)}\cdot
\frac{\Gamma(\tfrac d2+2)}{\Gamma(\tfrac d2+1+\tfrac\lambda2)} .
\end{equation}
Then $K_{m,d}(\lambda)$ is strictly increasing in $d$. Consequently, if $B<0$ and $\C_{m\to D}(p)<0$ for some integer $D\geq m$, then
\[
\C_\md(p)\leq\C_{m\to D}(p)<0
\ \ \text{for every integer} \ \ d\geq D .
\]
\end{lemma}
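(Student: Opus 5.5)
First verify the closed form \eqref{eq:liftcoef}. From \eqref{eq:H},
\[
K_{m,d}(\lambda)=\frac{d(d+2)}{\lambda+d}\cdot\frac{\Gamma(\tfrac d2)\Gamma(\tfrac{m+\lambda}2)}{\Gamma(\tfrac m2)\Gamma(\tfrac{d+\lambda}2)}.
\]
Write $d(d+2)\Gamma(\tfrac d2)=4\cdot\tfrac d2(\tfrac d2+1)\Gamma(\tfrac d2)=4\Gamma(\tfrac d2+2)$. Likewise $(\lambda+d)\Gamma(\tfrac{d+\lambda}2)=2\Gamma(\tfrac d2+\tfrac\lambda2+1)$. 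Substituting these gives the factor $2\Gamma(\tfrac{m+\lambda}2)/\Gamma(\tfrac m2)$ times $\Gamma(\tfrac d2+2)/\Gamma(\tfrac d2+1+\tfrac\lambda2)$, which is \eqref{eq:liftcoef}. The first factor is positive and independent of $d$. So monotonicity in $d$ reduces to showing that
\[
\phi(x)=\frac{\Gamma(x+2)}{\Gamma(x+1+\lambda/2)},\qquad x=\tfrac d2,
\]
is strictly increasing for $x>0$ when $0<\lambda/2<1$.

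For this step, differentiate $\log\phi$:
\[
(\log\phi)'(x)=\psi(x+2)-\psi(x+1+\lambda/2),
\]
where $\psi$ is the digamma function. Since $\psi$ is strictly increasing on $(0,\infty)$ (because $\psi'(z)=\sum_{n\ge0}(z+n)^{-2}>0$), and $x+2>x+1+\lambda/2$ exactly because $\lambda<2$, the derivative is strictly positive. Hence $K_{m,d}(\lambda)$ is strictly increasing in $d$, even as a function of real $d$.

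Alternatively, one can avoid digamma and compare consecutive integers directly. Going from $d$ to $d+2$, the ratio is $(\tfrac d2+2)/(\tfrac d2+1+\tfrac\lambda2)>1$. This handles steps of two, but not steps of one, so the continuous argument is cleaner and I would use it.

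The consequence then follows. Here $E$ and $B$ depend only on the source profile $v$ and on $p$, not on $d$, by \eqref{eq:Cpmd} and Proposition \ref{prop:mdavg}. For $d\ge D$ we have $K_{m,d}(\lambda)\ge K_{m,D}(\lambda)$, and multiplying by $B<0$ reverses the inequality:
\[
\C_\md(p)=E+K_{m,d}B\le E+K_{m,D}B=\C_{m\to D}(p)<0.
\]
If $D=m$, we also need $\C_{m\to m}$ to match the unlifted $\C(p)$. This holds because $\H_{m\to m}=1$, so $K_{m,m}=m(m+2)/(m+\lambda)$, consistent with \eqref{eq:Cp}.

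The only genuinely delicate point is the strict monotonicity of the Gamma ratio, and it rests entirely on $\lambda<2$ together with the monotonicity of $\psi$. Everything else is bookkeeping.
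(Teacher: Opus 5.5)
Your proof is correct, and its skeleton matches the paper's: the closed form \eqref{eq:liftcoef} via $\Gamma(z+1)=z\Gamma(z)$, then strict monotonicity in $d$ of the Gamma ratio, then multiplication by $B<0$, and finally the consistency check $\H_{m\to m}=1$ for $D=m$.

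The one genuine difference is the key step, the strict increase of $\Gamma(\tfrac d2+2)/\Gamma(\tfrac d2+1+\tfrac\lambda2)$. You differentiate the logarithm and use that the digamma function is strictly increasing, i.e.\ that $\Gamma$ is strictly log-convex. Here $\lambda<2$ enters as the ordering $x+2>x+1+\tfrac\lambda2$ of the two arguments.

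The paper instead sets $\alpha=1-\tfrac\lambda2$ and $x=\tfrac d2+1+\tfrac\lambda2$, and writes the ratio as $\Gamma(x+\alpha)/\Gamma(x)=\Gamma(\alpha)\big(\int_0^1t^{x-1}(1-t)^{\alpha-1}\,dt\big)^{-1}$. Monotonicity then follows because $t^{x-1}$ decreases pointwise in $x$, and $\lambda<2$ enters as the convergence condition $\alpha>0$.

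Your route is a one-line derivative computation, but it leans on the trigamma series. The paper's route stays within the beta-function identity it already uses in the appendix and needs no differentiation of special functions. Both give monotonicity for real $d$, so both handle unit steps in $d$. As you correctly note, the step-of-two ratio $(\tfrac d2+2)/(\tfrac d2+1+\tfrac\lambda2)$ alone would not.
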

\begin{proof}
The identity \eqref{eq:Cpmd} is available for every integer $d>m$, since Proposition \ref{prop:mdavg} carries no parity restriction, and for $d=m$ it reduces to \eqref{eq:Cp} because $\H_{m\to m}=1$. For the second expression in \eqref{eq:liftcoef}, apply $\Gamma(z+1)=z\Gamma(z)$ twice to write $\Gamma(\tfrac d2+2)=\tfrac{d(d+2)}4\Gamma(\tfrac d2)$ and once to write $\Gamma(\tfrac d2+1+\tfrac\lambda2) =\tfrac{d+\lambda}2\Gamma(\tfrac{d+\lambda}2)$, and insert \eqref{eq:H}.

For the monotonicity, put $\alpha=1-\tfrac\lambda2$ and $x=\tfrac d2+1+\tfrac\lambda2$, so that the second factor in \eqref{eq:liftcoef} is $\Gamma(x+\alpha)/\Gamma(x)$, and recall the beta integral
\[
\frac{\Gamma(x+\alpha)}{\Gamma(x)}
=\Gamma(\alpha)\Big(\int_0^1t^{x-1}(1-t)^{\alpha-1}\, dt\Big)^{-1},
\]
valid for $x,\alpha>0$. Here $\alpha>0$ is exactly the hypothesis $\lambda<2$. Since $t^{x-1}$ is strictly decreasing in $x$ for every $0<t<1$, the integral is strictly decreasing in $x$, so that $\Gamma(x+\alpha)/\Gamma(x)$, and with it $K_{m,d}(\lambda)$, is strictly increasing in $d$; the remaining factor in \eqref{eq:liftcoef} is positive and independent of $d$. If $B<0$ it follows that $d\mapsto\C_\md(p)$ is strictly decreasing, which is the last assertion.
\end{proof}

The parity restriction of \eqref{eq:Hrational} plays no role here, having been needed only to evaluate $\H_\md$ in interval arithmetic, and the lemma evaluates nothing. What it does need beyond Theorems \ref{thm:cap} and \ref{thm:caplift} is the sign of the spherical mean $B$, which the claims so far do not record although they compute it: it is the factor $2\sigma_m\Jc_{\mathrm{end}}$ of (C5) and (C5$'$). We therefore add
\begin{enumerate}
\item[(C7)] the interval $2\sigma_m\Jc_{\mathrm{end}}$, which encloses $\dashint_{\S^{m-1}}v\, dS$, is strictly negative,
\end{enumerate}
a single sign test requiring no further integration. It has been verified together with (C1)--(C6) on all $5596$ boxes of the two certificates used below. This yields

\begin{corollary}\label{cor:tail}
Let $d\geq10$ be an integer. For every $p\in[2.0000001,\,76.5]$ there is a $p$-harmonic $u\in C^{1,\gamma}_{loc}(\R^d)$ for which the pointwise expansion \eqref{eq:pointwise} fails at $x=0$.
\end{corollary}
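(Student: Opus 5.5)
Fix $d\geq 10$ and $p\in[2.0000001,76.5]$. The plan is to take the counterexample in dimension $d=10$ (or $m=4$) from the certificates and push it to every higher dimension using Lemma~\ref{lem:liftmono}. The two certificates in question are the $d=10$ row of Table~\ref{tab:cap} covering $[2.0000001,70]$ and the $(m,d)=(4,10)$ row of Table~\ref{tab:caplift} covering $[69.5,76.5]$. Their boxes number $5245+351=5596$, which matches the count for which (C7) was verified. Together they cover $[2.0000001,76.5]$, and I would split into the two cases $p\le 70$ and $p\in[69.5,76.5]$.

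\textbf{Case $p\in[2.0000001,70]$.} Take $m=D=10$. The proof of Theorem~\ref{thm:cap} supplies a box $P\ni p$, an eigenvalue $\lambda^*\in(1,2)$ by (C4), and a profile $v$ in dimension $10$ satisfying the hypotheses of Proposition~\ref{prop:profile}. For this profile, (C5) gives $\C(p)=\C_{10\to10}(p)<0$, since the sign is negative in Table~\ref{tab:cap}. By (C7), $B=2\sigma_{10}\Jc_{\rm end}$-enclosed mean is negative. Lemma~\ref{lem:liftmono} with $m=D=10$ then gives $\C_{10\to d}(p)<0$ for every $d\ge10$. For $d>10$, Proposition~\ref{prop:liftprofile} shows that the lift $u_{10\to d}$ is $p$-harmonic, lies in $C^{1,\lambda^*-1}_{loc}$, and satisfies $\M_\eps u(0)-u(0)=\C_{10\to d}(p)\eps^{\lambda^*}/(p+d)$. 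Because $\lambda^*<2$, dividing by $\eps^2$ gives divergence to $-\infty$, so \eqref{eq:pointwise} fails at $0$. For $d=10$ itself, Proposition~\ref{prop:profile} gives the same conclusion directly.

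\textbf{Case $p\in[69.5,76.5]$.} Take $m=4$ and $D=10$. Here the profile is in dimension $4$, and (C5$'$) certifies $\C_{4\to10}(p)<0$. Condition (C7) on these boxes gives $B<0$ for the $4$-dimensional profile. Lemma~\ref{lem:liftmono} then yields $\C_{4\to d}(p)\le\C_{4\to10}(p)<0$ for all $d\ge10$. Proposition~\ref{prop:liftprofile} with $m=4$ finishes the case exactly as before.

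\textbf{Main point of care.} The step needing attention is that (C7) and (C5)/(C5$'$) must refer to the \emph{same} profile $v$ with the same $\lambda^*$ that the shooting argument produced. This holds because both enclosures are computed from the same $\Jc_{\rm end}$ over the box $P\times\Lambda$, which contains $(p,\lambda^*)$. The lemma then only needs $1<\lambda^*<2$, $p>2$ and the two signs, all of which are certified. Finally, $\gamma=\lambda^*-1>0$ gives the regularity class stated in the corollary.
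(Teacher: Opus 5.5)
Your proposal is correct and follows the paper's proof essentially step for step. It uses the same two certificates (the $d=10$ row of Table~\ref{tab:cap} on $[2.0000001,70]$ and the $4\to10$ row of Table~\ref{tab:caplift} on $[69.5,76.5]$), (C7) for $B<0$, Lemma~\ref{lem:liftmono} with $D=10$, and Proposition~\ref{prop:liftprofile} for the lift; the only difference is that the paper dispatches $d=10$ by citing Corollary~\ref{cor:capgapfree}, whereas you handle it inline, which amounts to the same argument.
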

\begin{proof}
Write $\ell=2.0000001$ and let $p\in[\ell,76.5]$. For $d=10$ the assertion is contained in Corollary \ref{cor:capgapfree}, so assume $d>10$.

Suppose first that $p\in[\ell,70]$. The row $d=10$ of Table \ref{tab:cap} provides, by the proof of Theorem \ref{thm:cap}, an eigenvalue $\lambda\in(1,2)$ and a profile $v$ on $\S^{9}$ with $\C(p)<0$, and (C7) for that certificate gives $B<0$. Lemma \ref{lem:liftmono} with $m=D=10$ therefore gives $\C_{10\to d}(p)<0$, and Proposition \ref{prop:liftprofile} turns the cylindrical lift of the associated function into a counterexample in $\R^d$. If instead $p\in[69.5,76.5]$, the same argument applies to the row $4\to10$ of Table \ref{tab:caplift} with $m=4$ and $D=10$, the profile now living on $\S^3$. Since $69.5<70$, the two ranges cover $[\ell,76.5]$ between them.
\end{proof}

\begin{remark}[Implementation details]
We give some brief remarks on the implementation here; for more details please refer to the computational supplement. The computation is split into two programs, and only one of them is part of the proof. A \emph{certificate file} records the boxes: a header line naming the dimensions, the range $[p_-,p_+]$ and the sign to be certified, followed by one line per box giving $P$, $\Lambda$ and the time $\tau_0$ at which Lemma \ref{lem:pole} is applied. The verifier, a few hundred lines built on the CAPD::DynSys library \cite{KapelaMrozekWilczakZgliczynski2021} and integrating \eqref{eq:capsystem} with a Taylor method of order $20$, reads one such file and proves (C1)--(C6) for it. It performs no search of any kind: every quantity that a search would have to choose is read from the file, and nothing read from the file is trusted, since a box on which any claim fails is rejected and the program stops. How the boxes were found is therefore irrelevant to the proof, but worth recording. The eigenvalue box is proposed by the non-rigorous method of Section \ref{sec:odered}, run at the two endpoints of $P$ and padded on either side, the padding needing to be wide enough for the sign change in (C2) to hold across $P$ and narrow enough for (C5) to exclude $0$. A short ladder of paddings and of $\tau_0$ is swept until one combination certifies, and $P$ is bisected when none does. We note that no monotonicity of $f$ is assumed anywhere: the extreme values of $v$ are enclosed using the attained values $f(0)=a$ and $f(\frac\pi2)$ on one side and the enclosure tube on the other.

Apart from Lemma \ref{lem:pole} and the interval arithmetic of the CAPD library, the proof uses only Propositions \ref{prop:profile}, \ref{prop:liftprofile} and \ref{prop:mdavg}, the equivalence of viscosity and weak solutions for the $p$-Laplacian, and interior elliptic regularity. Verifying the $55472$ boxes of Tables \ref{tab:cap} and \ref{tab:caplift} takes about a quarter of an hour on twenty-two cores, and a single command does all of it.  The verifier, the certificate
files and the search program are archived at \cite{arroyo2026capcode}, where the enclosure of the extrema of $f$ along the tube is described in detail; the two tables were generated from the verifier's output by a script that rounds all interval endpoints outward. The code is also available along with non-rigorous Python versions on GitHub.\footnote{\url{https://github.com/jwcalder/Mean-Value-Counterexample}}
\end{remark}

\section{Other mean value properties}
\label{sec:variational}

As we noted in the introduction, there exist several variants of asymptotic  mean value properties characterizing $p$-harmonic functions. Therefore it is reasonable to investigate whether the counterexamples constructed in Section \ref{sec:main} work for other variants. Apart from the mean value formulas arising from game theoretical interpretations of the $p$-Laplacian as the one addressed in the previous sections (see also \cite{kawohl2012solutions,arroyop24}), many asymptotic characterizations for the $p$-harmonic functions found in the literature are the variational ones, which can be found in \cite{ishiwata2017natural} and \cite{delteso2021mean}.

In this section we show that the $p$-harmonic functions $u_p$ with $p\in(2-\delta,2+\delta)$ constructed in Section \ref{sec:main} work as counterexamples for the asymptotic mean value property proposed in \cite{ishiwata2017natural} for every $p>2$ sufficiently close to $2$. On the other hand, we show that these functions do satisfy the asymptotic formula in \cite{delteso2021mean} pointwise for every $p>2$ sufficiently close to $2$ (but not $p<2$), leaving the question of whether the viscosity interpretation can be dropped from the assumptions or not in the case $p>2$.

In \cite{ishiwata2017natural} the authors define a mean value formula that allows one to characterize asymptotically $p$-harmonic functions for $p>1$ as follows.  For a function $u$, we define the variational mean value, denoted $\M_\eps^Vu(x)\in\R$, to be the (unique) minimizer of the function
\begin{equation*}
	t
	\longmapsto
	\|u-t\|_{L^p(B_\eps(x))}.
\end{equation*}
This value can be characterized as the real number $\M_\eps^Vu(x)=t\in\R$ satisfying the corresponding Euler-Lagrange equation
\begin{equation}
\label{eq:e-l}
	\int_{B_\eps(x)}\left|u(y)-t\right|^{p-2}\left(u(y)-t\right)\,dy
	=
	0.
\end{equation}
It is worth noting that the solution $t$ to this equation is unique, so the mean value $\M_\eps^Vu(x)$ is well-defined. 

It is known that a function $u$ is $p$-harmonic if and only if the expansion $\M_\eps^Vu(x)=u(x)+o(\eps^2)$ holds in the viscosity sense, \cite[Theorem~1.1]{ishiwata2017natural}. The following result shows that the expansion does not, in general, hold pointwise. 

\begin{theorem}\label{thm:main2}
There exists $\delta>0$ such that for every $2 < p < 2+\delta$ the $p$-harmonic function $u_p\in C^{1,\gamma}_{loc}(\R^d)\cap C^\infty(\R^d\setminus \{0\})$  from Theorem \ref{thm:main} with $u_p(0)=0$ and $\nabla u_p(0)=0$ satisfies 
\begin{equation}\label{eq:fail2}
 \lim_{\eps\to 0} \frac{\M_\eps^V u_p(0)-u_p(0)}{\eps^2}=\lim_{\eps\to 0}\eps^{-2}\M_\eps^Vu_p(0) = -\infty.
\end{equation}
\end{theorem}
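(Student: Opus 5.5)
By homogeneity, everything reduces to a scalar equation on the unit ball, whose root we study by the Implicit Function Theorem in $(p,t)$ near $(2,0)$. Write $u_p(x)=r^{\lambda(p)}v_p(\omega)$ as in \eqref{eq:upvp}. Since $u_p(\eps y)=\eps^{\lambda(p)}u_p(y)$, the change of variables $y\mapsto\eps y$ in \eqref{eq:e-l} shows that $t$ solves the Euler--Lagrange equation on $B_\eps(0)$ if and only if $\eps^{-\lambda(p)}t$ solves it on $B_1(0)$. By uniqueness of the minimizer,
\[
\M^V_\eps u_p(0)=\eps^{\lambda(p)}\,t(p),
\]
where $t(p)$ is the unique zero of
\[
\Psi(p,t):=\int_{B_1}|u_p(y)-t|^{p-2}(u_p(y)-t)\,dy .
\]
Since $1<\lambda(p)<2$ for $2<p<2+\delta$ by \eqref{eq:lambdas}, it suffices to show $t(p)<0$ for such $p$. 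Then $\eps^{-2}\M^V_\eps u_p(0)=t(p)\eps^{\lambda(p)-2}\to-\infty$.

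\emph{Derivatives of $\Psi$ at $(2,0)$.} At $p=2$ we have $\Psi(2,t)=\int_{B_1}u_2-t|B_1|$, and $\int_{B_1}u_2=0$ because $\dashint_{\S^{d-1}}v_2\,dS=0$. Hence $t(2)=0$ and $\partial_t\Psi(2,0)=-|B_1|\neq0$. I will check that $\Psi$ is $C^1$ in $(p,t)$ near $(2,0)$. The map $s\mapsto|s|^{p-2}s$ is smooth in $p$ for $s\neq0$, with $\partial_p$ equal to $|s|^{p-2}s\log|s|$, which is integrable near the zero set of $u_p-t$. The map $p\mapsto u_p$ is smooth into $C^0(\bar B_1)$, with $r^{\lambda(p)}$ differentiable in $p$ via $r^{\lambda}\log r$, which is also integrable. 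Dominated convergence then justifies differentiating under the integral. The one-dimensional Implicit Function Theorem gives $t\in C^1$ with $t'(2)=\partial_p\Psi(2,0)/|B_1|$. So it suffices to show $\partial_p\Psi(2,0)<0$, since then $t(p)<0$ for $p$ slightly above $2$.

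\emph{Computing $\partial_p\Psi(2,0)$.} Differentiating in $p$ gives
\[
\partial_p\Psi(2,0)=\int_{B_1}u_2\log|u_2|\,dy+\int_{B_1}\partial_p u_p\big|_{p=2}\,dy,
\qquad
\partial_p u_p\big|_{p=2}=\lambda'(2)\,r^2\log r\,v_2+r^2\,\partial_pg_p\big|_{p=2}.
\]
Integrate in polar coordinates and use again that $v_2$ has zero spherical mean. The $\lambda'(2)$ term vanishes, and so does the $2\log r$ part of $\log|u_2|=2\log r+\log|v_2|$. What remains is
\[
\partial_p\Psi(2,0)=\frac{d}{d+2}\,|B_1|\left(\dashint_{\S^{d-1}}v_2\log|v_2|\,dS+\dashint_{\S^{d-1}}\partial_pg_p\big|_{p=2}\,dS\right).
\]
For the second spherical average I will use the exact formula \eqref{eq:intgprime2}, namely $-\frac1{2d}\dashint_{\S^{d-1}}\L^2_\infty v_2\,dS$, or at least the bound \eqref{eq:gprime2}, which gives $<-\frac{d-2}{2d^2}$.

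\emph{Main obstacle.} The crux is the inequality
\[
\dashint_{\S^{d-1}}v_2\log|v_2|\,dS<\frac1{2d}\dashint_{\S^{d-1}}\L^2_\infty v_2\,dS,
\]
which would follow, for instance, from $\dashint_{\S^{d-1}} v_2\log|v_2|\le\frac{d-2}{2d^2}$. Both sides are one-dimensional integrals in $s=\omega_d^2$ against the Beta weight $s^{-1/2}(1-s)^{(d-3)/2}$, as in \eqref{eq:avgsph}. I would first try to show that the left side is small or negative. The function $v_2\log|v_2|$ is negative on $\{v_2>0\}$ once $v_2<1$, which always holds, and on $\{v_2<0\}$ it is at most $\frac1{de}$ in size. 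I would bound each region separately using elementary estimates such as $s\log s\ge -1/e$. If this crude route is not uniform in $d\ge3$, I would instead compare against the exact value of $\dashint_{\S^{d-1}}\L_\infty^2 v_2\,dS$ from the appendix, which is larger than $\frac{d-2}{d}$ by \eqref{eq:intLinfty-3}. I expect this sign verification, uniformly in $d$, to be the only genuinely delicate step.
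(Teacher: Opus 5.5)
Your reduction follows the paper's proof. Homogeneity gives $\M^V_\eps u_p(0)=\eps^{\lambda(p)}t(p)$, and the first-order coefficient at $p=2$ is $\dashint_{B_1}\bigl(u_2\log|u_2|+\partial_pu_p|_{p=2}\bigr)\,dy$. The $\log r$ terms drop out because $v_2$ has zero spherical mean, and \eqref{eq:intgprime2} reduces everything to the sign of $\dashint_{\S^{d-1}}\bigl(v_2\log|v_2|-\frac1{2d}\L^2_\infty v_2\bigr)\,dS$.

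There is a minor point about your Implicit Function Theorem in $(p,t)$. It also needs continuity of $\partial_t\Psi=-(p-1)\int_{B_1}|u_p-t|^{p-2}\,dy$ for $p<2$, where the integrand is singular on a level set. The paper avoids this by expanding $J_p(s)=s+(p-2)s\log|s|+O((p-2)^2)$ uniformly for bounded $s$.

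The genuine gap is that this final sign is left unproved, and the route you sketch for it fails.

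\begin{itemize}
\item Your sufficient condition $\dashint_{\S^{d-1}}v_2\log|v_2|\,dS\le\frac{d-2}{2d^2}$ is false in higher dimensions. The remark after the proof of \eqref{eq:intLinfty-4} in Appendix~\ref{sec:appendix} records the reverse inequality for $d\ge4$, with equality at $d=4$.
\item It is strict for $d=5$: the principal-value formula gives $\dashint v_2\log|v_2|\,dS=\frac{18}{125}+\frac{24}{125\sqrt5}\log\frac{3-\sqrt5}2\approx0.0614>\frac3{50}$.
\item As $d\to\infty$, $d\dashint v_2\log|v_2|\,dS$ tends to roughly $0.55$, while $d\cdot\frac{d-2}{2d^2}\to\frac12$.
\item The quantity is positive in every dimension (about $0.053$ for $d=3$), so it is not ``small or negative''.
\item Your regionwise bound is also wrong: on $\{v_2<0\}$ one only has $\bigl|v_2\log|v_2|\bigr|\le\frac{\log d}{d}$, not $\frac1{de}$.
\end{itemize}

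Your fallback, as described, again rests on the lower bound $\frac{d-2}{d}$ from \eqref{eq:intLinfty-3}, so it is the same route. No argument that uses only \eqref{eq:intLinfty-3} or \eqref{eq:gprime2} can work in high dimensions, and even for $d=3$ the margin is only about $0.002$.

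What is needed is the sharp comparison \eqref{eq:intLinfty-4}. It uses the exact value $\dashint_{\S^{d-1}}\L^2_\infty v_2\,dS=2d(d-2)\sigma_d\int_0^1\frac{t^{1/2}(1-t)^{(d-1)/2}}{d(d-2)t+1}\,dt$, not a lower bound for it. The paper proves it in four steps:
\begin{enumerate}
\item Integrate by parts with \eqref{aux-eq} to write $\dashint v_2\log|v_2|\,dS=\sigma_d\,\mathrm{PV}\int_0^1\frac{2}{dt-1}t^{1/2}(1-t)^{(d-1)/2}\,dt$.
\item Substitute $t=\frac{s}{s+d-1}$, so that the difference becomes a positive multiple of $\mathrm{PV}\int_0^\infty\frac{s+1}{s-1}\frac{s^{1/2}}{((d-1)s+1)(s+d-1)^{d/2+1}}\,ds$.
\item Fold $(1,\infty)$ onto $(0,1)$ via $s\mapsto1/s$.
\item Show that the resulting bracket $s^{d/2}(s+d-1)^{d/2}-s((d-1)s+1)^{d/2}$ is negative on $(0,1)$, using the weighted AM--GM inequality $\frac2ds+\frac{d-2}d>s^{2/d}$.
\end{enumerate}
Since \eqref{eq:intLinfty-4} is already stated in the appendix, you could simply have cited it. Without it, or an equivalent exact comparison, your proposal does not establish $\partial_p\Psi(2,0)<0$.
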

\begin{proof}
For simplicity, we use the notation $J_p(s)=|s|^{p-2}s$. Let $u_p(x)=r^{\lambda(p)}v_p(\omega)$ be the function constructed in Section \ref{sec:main}, which by Theorem \ref{thm:existence} is $p$-harmonic for each $p\in(2-\delta,2+\delta)$. We start by computing its mean value $\M_\eps^Vu_p(0)$. To do that, we use that $u_p(y)=\eps^{\lambda(p)}u_p(\frac{y}{\eps})$ for every $y\in B_\eps(0)$, so \eqref{eq:e-l} yields
\begin{align*}
	0
	=
	~&
	\dashint_{B_\eps(0)}J_p\left(u_p(y)-\M_\eps^Vu_p(0)\right)\, dy
	\\
	=
	~&
	\dashint_{B_\eps(0)}J_p\left(\eps^{\lambda(p)}u_p\left(\frac{y}{\eps}\right)-\M_\eps^Vu_p(0)\right)\, dy
	\\
	=
	~&
	\eps^{\lambda(p)(p-1)}\dashint_{B_1(0)}J_p\left(u_p(y)-\frac{\M_\eps^Vu_p(0)}{\eps^{\lambda(p)}}\right)\, dy.
\end{align*}
Note that $\eps^{-\lambda(p)}\M_\eps^Vu_p(0)$ is the unique solution of the same equation for every $\eps>0$, and is thus constant in $\eps$. Therefore, there exists a constant $\C_V(p)\in\R$ such that
\begin{equation*}
    \M_\eps^Vu_p(0)
    =
    \C_V(p)\eps^{\lambda(p)}.
\end{equation*}
Observe that this  is analogous to the one obtained for $\M_\eps u_p(0)$ in Section \ref{sec:main}. However, while the exponent $\lambda(p)$ is exactly the same and has the same behaviour, here the constant $\C_V(p)$ depends on the definition of $\M_\eps^Vu_p(0)$.

As in the proof of Theorem \ref{thm:main}, our goal is to show that $\C_V(p)\neq0$ for $p\neq2$ sufficiently close to $2$. Notice that for $p=2$, the mean value $\M_\eps^Vu_2(0)$ reduces to the average of $u_2$ over $B_\eps(0)$ by the above Euler-Lagrange equation. Moreover, since $u_2$ is harmonic, the mean value property for harmonic functions yields $\C_V(2)=0$. Hence, we show that $\C_V'(2)\neq0$ so that the desired result follows, i.e.
\begin{equation*}
	\lim_{\eps\to 0}\eps^{-2}\M_\eps^Vu_p(0)
	=
	\C_V(p)\lim_{\eps\to 0}\eps^{\lambda(p)-2}
	=
	\pm\infty
\end{equation*}
for $p>2$ sufficiently close to $2$, where the sign of the infinity will be that of $\C_V(p)$.

In order to compute $\C_V'(2)$, we first show that $\C_V(p)$ is differentiable at $p=2$.
We use the expansion
\begin{equation*}
    J_p(s) =J_2(s)+(p-2)\left.\frac{\partial J_p(s)}{\partial p}\right|_{p=2}+O((p-2)^2)=s+(p-2)s\log|s|+O((p-2)^2)
\end{equation*}
as $p\to2$, uniformly for $s$ in bounded intervals, where $0\log0=0$. Indeed, this follows from Taylor's formula with respect to $p$, since $\left|\frac{\partial^2J_p(s)}{\partial p^2}\right|_{p=2}=|s|^{p-1}(\log|s|)^2$ is uniformly bounded for bounded $s$ and $|p-2|\leq\frac12$, with value zero at $s=0$.

Moreover, by Lemma \ref{lem:ift}, the smooth dependence of $\lambda(p)$ and $v_p$ on $p$ yields
\begin{equation*}
    u_p
    =
    u_2+(p-2)\left.\frac{\partial u_p}{\partial p}\right|_{p=2}
    +o(|p-2|)
\end{equation*}
uniformly on $\overline{B_1(0)}$. The uniformity up to the origin follows, when differentiating $r^{\lambda(p)}$, from the uniform bounds for $r^\lambda\log r$ and $r^\lambda(\log r)^2$ for $0<r\leq1$ and $\lambda$ near $2$, with both functions extended by zero at $r=0$.

Since
\begin{equation*}
	\dashint_{B_1(0)}J_p\left(u_p(y)-\C_V(p)\right)\, dy=0,
\end{equation*}
the value of $\C_V(p)$ lies between the minimum and maximum of $u_p$ on $\overline{B_1(0)}$. Thus $u_p-\C_V(p)$ is uniformly bounded, and substituting the expansion
of $J_p$ into the preceding equation gives
\begin{align*}
    \C_V(p)
    =
    ~&
    \dashint_{B_1(0)}u_p\,dy
    +(p-2)\dashint_{B_1(0)}(u_p-\C_V(p))\log|u_p-\C_V(p)|\,dy
    +O((p-2)^2).
\end{align*}
Since $\dashint_{B_1(0)}u_2\,dy=0$, this first gives $\C_V(p)=O(p-2)$. Dividing by $p-2$ and letting $p\to2$, using the uniform convergence $u_p-\C_V(p)\to u_2$ and the continuity of $s\mapsto s\log|s|$, we obtain
\begin{equation*}
    \C_V'(2)
    =
    \dashint_{B_1(0)}\left(u_2\log|u_2|+\left.\frac{\partial u_p}{\partial p}\right|_{p=2}\right)\,dy.
\end{equation*}

To compute this integral, recall that $u_p=r^{\lambda(p)}v_p$ and $v_p=v_2+g_p$, so for $r>0$ we have
\begin{equation*}
    \left.\frac{\partial u_p}{\partial p}\right|_{p=2}
    =
    r^2\left(\left.\frac{\partial g_p}{\partial p}\right|_{p=2}+\lambda'(2)\log r\,v_2\right)
\end{equation*}
and
\begin{equation*}
    u_2\log|u_2|=r^2\left(v_2\log|v_2|+2\log r\,v_2\right).
\end{equation*}
Using polar coordinates and the fact that $v_2$ has mean zero over $\S^{d-1}$, the terms containing $\log r\,v_2$ vanish after spherical integration. Therefore,
\begin{align*}
    \C_V'(2)
    =
    ~&
    d\int_0^1r^{d+1}\,dr\,\dashint_{\S^{d-1}}\left(v_2\log|v_2|+\left.\frac{\partial g_p}{\partial p}\right|_{p=2}\right)\,dS
    \\
    =
    ~&
    \frac{d}{d+2}\dashint_{\S^{d-1}}\left(v_2\log|v_2|+\left.\frac{\partial g_p}{\partial p}\right|_{p=2}\right)\,dS.
\end{align*}
Finally, by \eqref{eq:intgprime2},
\begin{equation*}
    \dashint_{\S^{d-1}}\left.\frac{\partial g_p}{\partial p}\right|_{p=2}\,dS
    =
    -\frac{1}{2d}\dashint_{\S^{d-1}}\L_\infty^2v_2\,dS.
\end{equation*}
Combining these, we thus get
\begin{align*}
	\C_V'(2)
	=
	~&
	\frac{d}{d+2}\dashint_{\S^{d-1}}\left(v_2\log\left|v_2\right|-\frac{1}{2d}\,\L_\infty^2v_2\right)\,dS
    <
    0
\end{align*}
where the inequality follows from \eqref{eq:intLinfty-4}. 
\end{proof}

Finally we remark that in \cite{delteso2021mean} the authors construct an asymptotic formula for $p$-harmonic functions closely related to $\M_\eps^Vu(x)=u(x)+o(\eps^2)$ which reads as follows: a function $u$ is $p$-harmonic if and only if the limit
\begin{equation}\label{AMVP:dTL}
	\lim_{\eps\to0}\frac{1}{\eps^p}\dashint_{B_\eps(x)}|u(y)-u(x)|^{p-2}(u(y)-u(x))\ dy
	=
	0
\end{equation}
holds in a viscosity sense. This formula is known to hold pointwise in the plane in the range $p_0<p<\infty$, where $p_0\approx1.117$ \cite[Theorem~2.3]{delteso2021mean} (although this lower bound for the range of $p$'s comes from a technicality of the proof). Surprisingly, for $d\geq3$ the $p$-harmonic functions constructed in Section \ref{sec:main} satisfy this asymptotic expansion pointwise in the range $p>2$, so the question of whether this limit characterizes $p$-harmonicity without the viscosity interpretation for $d\geq 3$ is still an open problem. To see this, replace the $p$-harmonic function $u_p(x)$ in the integral above so
\begin{align*}
	\frac{1}{\eps^p}\dashint_{B_\eps(0)}|u_p(y)|^{p-2}u_p(y)\,dy
	=
	~&
	\frac{1}{\eps^p}\dashint_{B_\eps(0)}|y|^{\lambda(p)(p-1)}\left|v_p\left(\frac{y}{|y|}\right)\right|^{p-2}v_p\left(\frac{y}{|y|}\right)\, dy
	\\
	=
	~&
	\frac{1}{|B_1|\eps^{p+d}}\int_0^\eps r^{\lambda(p)(p-1)}\left(r^{d-1}\int_{\S^{d-1}}|v_p|^{p-2}v_p\,dS\right)\, dr
	\\
	=
	~&
	\frac{\eps^{\lambda(p)(p-1)-p}}{|B_1|(d+\lambda(p)(p-1))}\int_{\S^{d-1}}|v_p|^{p-2}v_p\,dS
	\\
	=
	~&
	\frac{d}{d+\lambda(p)(p-1)}\eps^{\lambda(p)(p-1)-p}\dashint_{\S^{d-1}}|v_p|^{p-2}v_p\,dS
	=:\C_*(p)\eps^{\mu(p)}.
\end{align*}
Observe that in this case the exponent in $\eps$ is $\mu(p):=\lambda(p)(p-1)-p$ instead of just simply $\lambda(p)$, so the asymptotic formula holds at $x=0$ independently of the value of the constant $\C_*(p)$ when the exponent $\mu(p)>0$, while for $x\neq 0$ the limit \eqref{AMVP:dTL} holds because $u_p$ is $C^2$ away from the origin. In fact, by Lemma \ref{lem:ift}, we have that $\lambda(2)=2$ and $\lambda'(2)>-1$, and so $\mu(2)=0$ and $\mu'(2)=\lambda'(2)+1>0$, so $\mu(p)$ is increasing in a neighborhood of $p=2$, and thus $\mu(p)>0$ for $p>2$ sufficiently close to $2$.

On the other hand, by the above reasoning, $\mu(p)<0$ when $p<2$ close to 2. Moreover, we observe that $\C_*(2)=0$ and 
\begin{align*}
	\C_*'(2)
	=
	\frac{d}{d+2}\left.\frac{\partial}{\partial p}\right|_{p=2}\dashint_{\S^{d-1}}|v_p|^{p-2}v_p\,dS
	=
	\frac{d}{d+2}\dashint_{\S^{d-1}}\left(v_2\log\left|v_2\right|-\frac{1}{2d}\,\L_\infty^2v_2\right)\,dS
	=
	\C_V'(2)<0.
\end{align*}
This implies that $\C_*(p)>0$ for every $p<2$ sufficiently close to $2$, so the $p$-harmonic function $u_p$ fails to satisfy \eqref{AMVP:dTL}.

Thus we obtain
\begin{theorem}\label{thm:dTL-counterexample}
There exists $\delta>0$ such that for every $2-\delta<p<2$ the
$p$-harmonic function
$u_p\in C^{1,\gamma}_{\rm loc}(\R^d)\cap C^\infty(\R^d\setminus\{0\})$
from Theorem \ref{thm:main}, with $u_p(0)=0$ and $\nabla u_p(0)=0$,
does not satisfy the asymptotic mean value formula \eqref{AMVP:dTL}
pointwise at the origin.
\end{theorem}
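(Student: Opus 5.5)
The plan is to assemble the argument already sketched before the statement, taking care of the limit and of the sign. Fix $p\in(2-\delta,2)$ with $\delta$ as in Lemma~\ref{lem:ift} and Theorem~\ref{thm:existence}, shrinking $\delta$ when needed. Let $u_p=r^{\lambda(p)}v_p$. By Theorem~\ref{thm:existence}, $u_p$ is $p$-harmonic, belongs to $C^{1,\gamma}_{\rm loc}(\R^d)\cap C^\infty(\R^d\setminus\{0\})$, and satisfies $u_p(0)=0$ and $\nabla u_p(0)=0$; the last two hold because $\lambda(p)>1$. Since $u_p(0)=0$, the quantity in \eqref{AMVP:dTL} at $x=0$ equals $\eps^{-p}\dashint_{B_\eps(0)}J_p(u_p)\,dy$. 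The polar coordinate computation above gives exactly
\[
\frac1{\eps^p}\dashint_{B_\eps(0)}|u_p|^{p-2}u_p\,dy=\C_*(p)\,\eps^{\mu(p)},\qquad \mu(p)=\lambda(p)(p-1)-p,
\]
for every $\eps>0$. This integration is legitimate because $\lambda(p)(p-1)+d>0$.

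Next I would fix the sign of the exponent. The function $\mu$ is smooth near $2$, with $\mu(2)=0$ and $\mu'(2)=\lambda'(2)+1>0$ by \eqref{eq:lambdaprime2}. Hence $\mu(p)<0$ for $p\in(2-\delta,2)$ once $\delta$ is small. It follows that the limit in \eqref{AMVP:dTL} is $\pm\infty$ whenever $\C_*(p)\neq0$, so it cannot be zero.

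The main obstacle is proving $\C_*(p)\neq 0$, which requires differentiating $p\mapsto\dashint_{\S^{d-1}}J_p(v_p)\,dS$ at $p=2$. For this I would reuse the expansion $J_p(s)=s+(p-2)s\log|s|+O((p-2)^2)$, uniform on bounded sets, from the proof of Theorem~\ref{thm:main2}. I would combine it with $v_p=v_2+(p-2)\partial_pg_p|_{p=2}+o(p-2)$ in $C^{0}(\S^{d-1})$, which follows from the smoothness of $g$ in Lemma~\ref{lem:ift}. The functions $v_p$ are uniformly bounded, so these combine to give
\[
\dashint_{\S^{d-1}}J_p(v_p)\,dS=(p-2)\dashint_{\S^{d-1}}\Big(v_2\log|v_2|+\partial_pg_p|_{p=2}\Big)dS+o(p-2).
\]
Here I use that $\dashint v_2=0$ and that $s\log|s|$ is continuous. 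The first factor $d/(d+\lambda(p)(p-1))$ is smooth and equals $d/(d+2)$ at $p=2$. By \eqref{eq:intgprime2} and \eqref{eq:intLinfty-4}, this gives $\C_*(2)=0$ and $\C_*'(2)=\C_V'(2)<0$. Therefore $\C_*(p)>0$ for $p<2$ close to $2$, which I would secure by shrinking $\delta$ once more.

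Combining the two steps, $\eps^{-p}\dashint_{B_\eps(0)}J_p(u_p-u_p(0))\,dy=\C_*(p)\eps^{\mu(p)}\to+\infty$ as $\eps\to0$. So \eqref{AMVP:dTL} fails pointwise at the origin for every $p\in(2-\delta,2)$, which proves the theorem.
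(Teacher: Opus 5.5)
Your proposal is correct and follows the paper's argument. You use the same homogeneity computation giving $\C_*(p)\eps^{\mu(p)}$, get $\mu(p)<0$ for $p<2$ from $\lambda'(2)>-1$, and derive $\C_*'(2)=\C_V'(2)<0$ from \eqref{eq:intgprime2} and \eqref{eq:intLinfty-4}, so that $\C_*(p)>0$ and the limit is $+\infty$. You also spell out the first-order expansion of $\dashint_{\S^{d-1}}J_p(v_p)\,dS$, which the paper only states implicitly.
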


\appendix 
\section{Explicit computations}
\label{sec:appendix}

Here we gather explicit computations that are rather standard but tedious. We first record the formulas for the spherical $p$-Laplacian.

\begin{proposition}\label{prop:sphericalp}
Let $\L_p^\lambda$ with $\lambda>0$ be the spherical operator given by
\begin{equation*}
    \L_p^\lambda v=r^{2-\lambda}\Delta_p^N(r^\lambda v)
\end{equation*}
for $v\in C^2(\S^{d-1})$ and $r>0$. Then
\begin{equation*}
    \L_2^\lambda v=\Delta_\S v+\lambda(\lambda+d-2)v
\end{equation*}
and
\begin{equation*}
    \L_\infty^\lambda v=\A_\lambda v+\lambda(\lambda-1)v,
\end{equation*}
where $\A_\lambda v$ is the nonlinear operator given by
\begin{equation*}
    \A_\lambda v=\frac{\nabla_\S v\cdot D_\S^2 v\,\nabla_\S v+\lambda^2 v|\nabla_\S v|^2}{|\nabla_\S v|^2+\lambda^2v^2}.
\end{equation*}
Furthermore,
\begin{equation*}
    \L_p^\lambda v
    =
    \Delta_\S v+\lambda((p-1)\lambda+d-p)v+(p-2)\A_\lambda v
\end{equation*}
for $1<p<\infty$.
\end{proposition}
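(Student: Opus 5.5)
The plan is a direct computation in polar form. Write $u(x)=r^\lambda v(\omega)$ with $v$ extended $0$-homogeneously to $\R^d\setminus\{0\}$, so that $\nabla v\cdot x=0$ and $D^2v\,x=-\nabla v$. These are the Euler identities for degree $0$; on the sphere they say that $\nabla v=\nabla_\S v$ is tangential and $\omega\cdot D^2 v\,\omega=0$.

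\textbf{Gradient and Hessian of $u$.} At a point $x=r\omega$ we have
\[
\nabla u=r^{\lambda-1}\big(\nabla_\S v+\lambda v\,\omega\big).
\]
Differentiating once more, and using \eqref{eq:sphere_hess} to identify the tangential block, I expect
\[
D^2u=r^{\lambda-2}\Big(D_\S^2 v+\lambda v P+(\lambda-1)\big(\omega\otimes\nabla_\S v+\nabla_\S v\otimes\omega\big)+\lambda(\lambda-1)v\,\omega\otimes\omega\Big).
\]
To check this, compute $D^2$ of $r^\lambda$, of $v$, and the cross terms $\lambda r^{\lambda-2}(x\otimes\nabla v+\nabla v\otimes x)$, all at $r=1$. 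The $0$-homogeneous Hessian $D^2v$ restricted to the sphere equals $PD^2vP$ plus terms involving $\omega$. By \eqref{eq:sphere_hess}, with $\nabla v\cdot\omega=0$ in this extension, this reduces to $D_\S^2v$ together with the mixed terms $-(\omega\otimes\nabla_\S v+\nabla_\S v\otimes\omega)$. This mixed-term bookkeeping is the main obstacle. I would guard against errors by testing the final formula on $u=x_d^2-|x|^2/d$, where $\lambda=2$ and $v=v_2$.

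\textbf{The Laplacian.} Take the trace. The tangential block contributes $\Delta_\S v+(d-1)\lambda v$, the off-diagonal terms are traceless because $\nabla_\S v\perp\omega$, and the radial entry contributes $\lambda(\lambda-1)v$. Altogether this gives $\L_2^\lambda v=\Delta_\S v+\lambda(\lambda+d-2)v$.

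\textbf{The infinity Laplacian.} Set $w=\nabla_\S v+\lambda v\omega$ and compute $w\cdot Mw$, where $M$ is the bracketed matrix. The pieces are:
\begin{itemize}
\item from the tangential block, $\nabla_\S v\cdot D_\S^2v\nabla_\S v+\lambda v|\nabla_\S v|^2$;
\item from the cross terms, $2(\lambda-1)\lambda v|\nabla_\S v|^2$;
\item from the radial term, $\lambda(\lambda-1)v\cdot\lambda^2v^2$.
\end{itemize}
Here $D_\S^2v\,\omega=0$ because $D_\S^2 v=P(\cdot)P$. Dividing by $|w|^2=|\nabla_\S v|^2+\lambda^2v^2$, I then subtract $\lambda(\lambda-1)v$, that is $\lambda(\lambda-1)v\,(|\nabla_\S v|^2+\lambda^2v^2)/|w|^2$. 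The remainder has numerator
\[
\nabla_\S v\cdot D^2_\S v\nabla_\S v+\big(\lambda+2\lambda(\lambda-1)-\lambda(\lambda-1)\big)v|\nabla_\S v|^2=\nabla_\S v\cdot D^2_\S v\nabla_\S v+\lambda^2v|\nabla_\S v|^2,
\]
which is exactly $\A_\lambda v$. This holds wherever $w\neq0$, i.e.\ where $\nabla u\neq0$, and it gives $\L_\infty^\lambda v=\A_\lambda v+\lambda(\lambda-1)v$.

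\textbf{The $p$-Laplacian.} Combining the two results via \eqref{eq:Lp=L2-Linf}:
\[
\lambda(\lambda+d-2)+(p-2)\lambda(\lambda-1)=\lambda\big((p-1)\lambda+d-p\big),
\]
which yields the stated formula for $\L_p^\lambda v$.
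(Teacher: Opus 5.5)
Your proposal is correct and follows the paper's proof. You derive the same Hessian formula \eqref{eq:hessianuS}, take its trace, expand the quadratic form to the numerator in \eqref{eq:inflapS}, and subtract $\lambda(\lambda-1)v$. The only difference is that you work with the $0$-homogeneous extension of $v$, which removes the $\nabla v\cdot\omega$ and $\omega\cdot D^2v\,\omega$ terms the paper carries through \eqref{eq:hessianu}; this is legitimate because \eqref{eq:sphere_grad}--\eqref{eq:sphere_hess} are independent of the extension, and it makes the mixed-term bookkeeping you flagged come out directly.
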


\begin{proof}
Let $u(x)=r^\lambda v(\omega)$, where $r=|x|$ and $\omega=\frac{x}{r}$. Note that we have $\nabla_x r = \frac{x}{r} = \omega$ and 
\begin{equation}\label{eq:omega_partial}
\partial_{x_i} \omega_j = \frac{1}{r}\left( \delta_{ij} - \omega_i \omega_j\right).
\end{equation}
A small calculation yields
\begin{equation}\label{eq:gradv}
\nabla_x v(\omega) = \frac{1}{r}\left[\nabla v(\omega) - (\nabla v(\omega) \cdot \omega)\omega\right].
\end{equation}
Here we use $\nabla_x$ to denote the gradient operator in $x$, applied to the quantity that comes next, while $\nabla v$ is the gradient of $v$, where $v:\R^d\to \R$. Therefore 
\begin{equation}\label{eq:gradu}
\nabla u(x) = r^\lambda \nabla_x v(\omega) + \lambda r^{\lambda-1}v(\omega)\nabla_x r=r^{\lambda-1} \left[\nabla v(\omega) + (\lambda v(\omega) - \nabla v(\omega) \cdot \omega)\omega\right],
\end{equation}
and we further compute
\begin{align}
D^2 u(x)
&= \left[\nabla v(\omega)+(\lambda v(\omega)-\nabla v(\omega)\cdot\omega)\omega\right]
\otimes \nabla(r^{\lambda-1})\notag\\
&\hspace{1in}
+ r^{\lambda-1}D(\nabla v(\omega))
+ r^{\lambda-1}\omega\otimes \nabla_x(\lambda v(\omega)-\nabla v(\omega)\cdot\omega)\notag\\
&\hspace{1in}
+ r^{\lambda-1}(\lambda v(\omega)-\nabla v(\omega)\cdot\omega)D\omega\notag\\
&= r^{\lambda-2}\Big[
(\lambda-1)\left(\nabla v(\omega)+(\lambda v(\omega)-\nabla v(\omega)\cdot\omega)\omega\right)\otimes\omega
+ D^2v(\omega)P\notag\\
&\hspace{1in}
+ \omega\otimes P\left((\lambda-1)\nabla v(\omega)-D^2v(\omega)\omega\right)
+(\lambda v(\omega)-\nabla v(\omega)\cdot\omega)P
\Big]\notag\\
&= r^{\lambda-2}\Big[ D^2 v(\omega) + (\lambda v(\omega) - \nabla v(\omega) \cdot \omega)I
+ (\lambda - 1)(\nabla v(\omega)\otimes \omega + \omega \otimes \nabla v(\omega))\notag\\
&\hspace{1in}+ \left( \lambda(\lambda-2)v(\omega) + (3-2\lambda)\nabla v(\omega)\cdot \omega
+ \omega \cdot D^2 v(\omega)\omega\right)\omega \otimes \omega \notag\\
&\hspace{3in} - \omega \otimes D^2 v(\omega)\omega
- D^2 v(\omega)\omega\otimes \omega \Big],
\label{eq:hessianu}
\end{align}
where $P=I-\omega\otimes\omega$.
Using the definitions of the spherical gradient \eqref{eq:sphere_grad} and Hessian \eqref{eq:sphere_hess} to rewrite the preceding expressions in terms of spherical derivatives, these can be written as
\begin{equation}\label{eq:graduS}
\nabla u = r^{\lambda-1}\left[\nabla_\S \,v + \lambda v \omega\right],
\end{equation}
and
\begin{align}\label{eq:hessianuS}
D^2 u &= r^{\lambda-2}\Big[ D^2_\S\, v + \lambda vP + (\lambda - 1)(\nabla_\S\, v\otimes \omega + \omega \otimes \nabla_\S\, v)+ \lambda(\lambda-1)v\,(\omega \otimes \omega)\Big].
\end{align}

From this, using
\[
\tr(D_\S^2v)=\Delta_\S v,\qquad \tr P=d-1,\qquad
\tr(\nabla_\S v\otimes\omega)=\nabla_\S v\cdot\omega=0,\qquad
\tr(\omega\otimes\omega)=1,
\]
we get
\begin{equation}\label{eq:laplacianS}
\Delta u = \tr (D^2 u) = r^{\lambda-2}\left[ \Delta_\S\, v + \lambda (\lambda + d-2) v\right],
\end{equation}
and so
\begin{equation*}
    \L_2^\lambda v=\Delta_\S\, v + \lambda (\lambda + d-2) v.
\end{equation*}

For the normalized infinity-Laplacian, using
\[
P\nabla_\S v=\nabla_\S v,\qquad P\omega=0,\qquad
D_\S^2v\,\omega=0,\qquad \nabla_\S v\cdot\omega=0,
\]
together with $(a\otimes b)c=a(b\cdot c)$, we obtain
\begin{equation}\label{eq:inflapS}
\Delta_\infty^N u = \frac{\nabla u \cdot D^2 u \nabla u}{|\nabla u|^2} = \frac{r^{\lambda-2}\left[\nabla_\S v \cdot D_\S^2 v\, \nabla_\S v+ \lambda (2\lambda - 1) v |\nabla_\S v|^2 + \lambda^3(\lambda-1)v^3 \right]}{|\nabla_\S v + \lambda v \omega|^2}.
\end{equation}
Using $\nabla_\S v \cdot \omega=0$ again, and rearranging the numerator, we can simplify this to read
\begin{align}
    \label{eq:inflapS_simp-A}
    \Delta_\infty^N u =
~&r^{\lambda-2}\left[\lambda(\lambda-1) v + \frac{\nabla_\S v \cdot D_\S^2 v\, \nabla_\S v+ \lambda^2 v |\nabla_\S v|^2}{|\nabla_\S v|^2 + \lambda^2 v^2}\right]\\
~&=r^{\lambda-2}\left[\lambda(\lambda-1) v +\A_\lambda v\right],
\end{align}
and so
\begin{equation*}
    \L_\infty^\lambda v
    =
    \lambda(\lambda-1)v+\A_\lambda v,
\end{equation*}
where
\begin{align}
    \A_\lambda v
    =
    \frac{\nabla_\S v \cdot D_\S^2 v\, \nabla_\S v+ \lambda^2 v |\nabla_\S v|^2}{|\nabla_\S v|^2 + \lambda^2 v^2}.
\end{align}

Finally, since $\L_p^\lambda v=\L_2^\lambda v+(p-2)\L_\infty^\lambda v$, we get
\begin{align*}
    \L_p^\lambda v
    =
    ~&
    \Delta_\S v+\lambda(\lambda+d-2)v
    +(p-2)\left[\lambda(\lambda-1)v+\A_\lambda v\right]
    \\
    =
    ~&
    \Delta_\S v+\lambda((p-1)\lambda+d-p)v+(p-2)\A_\lambda v
\end{align*}
as desired.
\end{proof}

Next we record the explicit integral needed in the proof of Theorem \ref{thm:main}.
\begin{corollary}\label{corollary-small-phi}
For $d\geq 3$, let $v_2:\S^{d-1}\to\R$ be the spherical function given by $v_2(\omega)=\omega_d^2-\frac{1}{d}$. Then $\L_2^2v_2=0$ and $\L_\infty^2v_2=\A_2v_2+2v_2$,
with
\begin{equation*}
    \A_2v_2(\omega)
    =
    2d(d-2)\frac{\omega_d^2(1-\omega_d^2)}{d(d-2)\omega_d^2+1}.
\end{equation*}

\end{corollary}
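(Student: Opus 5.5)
The plan is to reduce everything to one variable using the axial symmetry of $v_2$, and then substitute into the formulas of Proposition~\ref{prop:sphericalp}. Since $v_2$ depends only on $\omega_d$, I would write $\omega_d=\cos\theta$ and $v_2(\omega)=f(\theta)$ with $f(\theta)=\cos^2\theta-\frac1d$. Proposition~\ref{prop:sphericalcomp} then supplies all the spherical quantities needed in terms of $f$:
\[
|\nabla_\S v_2|^2=f'^2,\qquad \nabla_\S v_2\cdot D^2_\S v_2\nabla_\S v_2=f'^2f'',\qquad \Delta_\S v_2=f''+(d-2)\cot\theta\,f'.
\]
Throughout I would set $t=\omega_d^2=\cos^2\theta$, so that $f=t-\frac1d$, $f'=-\sin 2\theta$ with $f'^2=4t(1-t)$, and $f''=-2\cos 2\theta=2-4t$.

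\textbf{The identity $\L_2^2v_2=0$.} Using $\cot\theta\,f'=-2\cos^2\theta=-2t$, I get
\[
\Delta_\S v_2=(2-4t)-2(d-2)t=2-2dt=-2d\,v_2.
\]
Hence, by Proposition~\ref{prop:sphericalp} with $\lambda=2$, we have $\L_2^2v_2=\Delta_\S v_2+2d\,v_2=0$. This also confirms \eqref{eq:p=2-eq}. Alternatively, the same conclusion follows directly from the harmonicity of the polynomial $u_2$ in \eqref{eq:u2}.

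\textbf{The formula for $\A_2v_2$.} The identity $\L_\infty^2v_2=\A_2v_2+2v_2$ is simply Proposition~\ref{prop:sphericalp} with $\lambda=2$, since $\lambda(\lambda-1)=2$. It remains to evaluate $\A_2v_2$ with $\lambda^2=4$. The numerator factors as
\[
f'^2\big(f''+4f\big)=4t(1-t)\Big(2-4t+4t-\tfrac4d\Big)=4t(1-t)\,\frac{2(d-2)}{d}.
\]
The denominator is
\[
f'^2+4f^2=4t(1-t)+4\Big(t-\tfrac1d\Big)^2=4\Big(\tfrac{d-2}{d}\,t+\tfrac1{d^2}\Big),
\]
which agrees with \eqref{eq:positivity} and is at least $4/d^2>0$, so the quotient is well defined everywhere on the sphere. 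Dividing and multiplying numerator and denominator by $d^2$ gives
\[
\A_2v_2=\frac{2d(d-2)\,t(1-t)}{d(d-2)t+1},
\]
which is the claimed formula with $t=\omega_d^2$.

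\textbf{Main obstacle.} The only delicate points are at the poles $\theta=0,\pi$, where Proposition~\ref{prop:sphericalcomp}'s polar formulas and the $\cot\theta$ term degenerate. I would dispose of these by continuity: both sides of each identity are smooth functions of $\omega$, and they agree on the dense set where $0<\theta<\pi$. Should that feel insufficient, the same quantities can be checked directly at the poles using the lift $v(x)=x_d^2-\frac1d$ in \eqref{eq:sphere_grad}--\eqref{eq:sphere_lap}. Apart from this, the argument is a routine algebraic simplification.
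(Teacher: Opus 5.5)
Your proposal is correct and is essentially the paper's argument: both come down to evaluating $|\nabla_\S v_2|^2=4\omega_d^2(1-\omega_d^2)$ and $\nabla_\S v_2\cdot D^2_\S v_2\,\nabla_\S v_2=8\omega_d^2(1-\omega_d^2)(1-2\omega_d^2)$ and substituting into \eqref{eq:A} with $\lambda=2$. The differences are bookkeeping. You obtain these quantities from the $\theta$-formulas of Proposition~\ref{prop:sphericalcomp}, which is why you need your continuity argument at the poles (that argument is valid). The paper instead computes $\nabla_\S v_2=2\omega_d(e_d-\omega_d\omega)$ and $D^2_\S v_2$ directly from the lift $x_d^2-\frac1d$ via \eqref{eq:sphere_grad}--\eqref{eq:sphere_hess}, which holds at every point of the sphere at once, and it takes $\L_2^2v_2=0$ from the harmonicity of $u_2$ rather than verifying it as you do.
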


\begin{proof}
The identity $\L_\infty^2v_2=\A_2v_2+2v_2$ comes immediately from \eqref{eq:infLapS}.
A direct computation of $\nabla v_2$ and $D^2v_2$ (i.e. the gradient and Hessian in $\R^d$ of $v_2(x)=x_d^2-\frac{1}{d}$) yields
\begin{equation*}
    \nabla v_2(\omega)=2\omega_de_d,
    \qquad
    D^2v_2(\omega)=2e_d\otimes e_d.
\end{equation*}
Therefore, for the spherical gradient and hessian we recall formulas \eqref{eq:sphere_grad} and \eqref{eq:sphere_hess} so we get
\begin{align*}
    \nabla_\S v_2
    =
    ~&
    2\omega_d\left(e_d-\omega_d\omega\right),
    \\
    D_\S^2v_2
    =
    ~&
    2e_d\otimes e_d-2\omega_d(\omega\otimes e_d+e_d\otimes\omega)-2(I-2\omega\otimes\omega)\omega_d^2,
\end{align*}
and thus
\begin{align*}
    |\nabla_\S v_2|^2
    =
    ~&
    4\omega_d^2(1-\omega_d^2),
    \\
    \nabla_\S v_2\cdot D_\S^2 v_2\,\nabla_\S v_2
    =
    ~&
    8\omega_d^2(1-\omega_d^2)(1-2\omega_d^2).
\end{align*}
Replacing these identities in \eqref{eq:A} we finally obtain
\begin{align*}
    \A_2v_2
    =
    \frac{\nabla_\S v_2\cdot D_\S^2 v_2\,\nabla_\S v_2+4v_2|\nabla_\S v_2|^2}{|\nabla_\S v_2|^2+4v_2^2}
    =
    2d(d-2)\frac{\omega_d^2(1-\omega_d^2)}{d(d-2)\omega_d^2+1},
\end{align*}
which is the desired result.
\end{proof}

Next we recall some well known identities that will be useful in the proof of the following result.  The $(d-1)$-dimensional Lebesgue measure of $\S^{d-1}$ is given by the formula
\begin{equation}
    |\S^{d-1}|
    =
    \frac{2\pi^{d/2}}{\Gamma(\frac{d}{2})},
\end{equation}
where $\Gamma$ stands for Euler's gamma function. It will also be useful to recall the definition of Euler's beta function
\begin{equation}
    B(x,y)=\int_0^1t^{x-1}(1-t)^{y-1}\,dt
    =
    \frac{\Gamma(x)\Gamma(y)}{\Gamma(x+y)}
\end{equation}
for $x,y>0$.

For the integrals involving $v_2$ and $\L_\infty^2v_2$ we use the following
auxiliary formula for the averaged integral of a spherical axially symmetric
function. Writing $s=\omega_d\in[-1,1]$ and
$
r=\sqrt{1-s^2},
$
the level set $\{\omega_d=s\}$ is a $(d-2)$-sphere of radius $r$ and this yields a change of variables
$r^{d-2}\,d\ell
=
r^{d-2}\frac{ds}{r}
=
r^{d-3}\,ds
=
(1-s^2)^{(d-3)/2}\,ds,
$
where $\ell$ is the arc-length parameter.
Consequently, slicing $\S^{d-1}$ by the level sets of the last coordinate
gives
\[
\dashint_{\S^{d-1}}F(\omega_d^2)\,dS(\omega)
=
\frac{|\S^{d-2}|}{|\S^{d-1}|}
\int_{-1}^1 F(s^2)(1-s^2)^{(d-3)/2}\,ds.
\]
Since the integrand is even, restricting to $s\in(0,1)$ and using the change
of variables $t=s^2$, with $ds=\frac12 t^{-1/2}\,dt$, yields
\begin{equation}\label{spherical-integral}
    \dashint_{\S^{d-1}}F(\omega_d^2)\,dS(\omega)
    =
    \sigma_d\int_0^1F(t)t^{-1/2}(1-t)^{(d-3)/2}\,dt,
\end{equation}
where
\begin{equation}\label{sigma_d}
    \sigma_d
    =
    \frac{|\S^{d-2}|}{|\S^{d-1}|}
    =
    \frac{\Gamma(\frac{d}{2})}{\sqrt{\pi}\,\Gamma(\frac{d-1}{2})}
    =
    \frac{1}{B\left(\frac{1}{2},\frac{d-1}{2}\right)}.
\end{equation}
 This formula implies the elementary recurrence
\begin{equation}\label{eq:sigma-recurrence}
\sigma_2=\frac1\pi,\qquad
\sigma_3=\frac12,\qquad
\sigma_{d+2}=\frac{d}{d-1}\sigma_d.
\end{equation}

One could also compute the following required integrals using rotational invariance, spherical slicing, and elementary relations between sphere areas, without appealing to the gamma or beta functions, but we adopt the following approach for brevity. Thus, in principle, the integrals below are elementary, though somewhat tedious to compute.
\begin{proposition}
For $d\geq 3$, let $v_2:\S^{d-1}\to\R$ be the spherical function given by $v_2(\omega)=\omega_d^2-\frac{1}{d}$. Then
\begin{align}\label{eq:intLinfty-1}
    \dashint_{\S^{d-1}}v_2^2\,dS
    =
    \frac{2(d-1)}{d^2(d+2)},
\end{align}
\begin{align}\label{eq:intLinfty-2}
    0
    <
    \dashint_{\S^{d-1}}v_2\L_\infty^2v_2\,dS
    \leq
    \frac{1}{d},
\end{align}
\begin{align}\label{eq:intLinfty-3}
   \dashint_{\S^{d-1}}\L_\infty^2v_2\,dS
   >
   \frac{d-2}{d},
\end{align}
and
\begin{align}\label{eq:intLinfty-4}
   \dashint_{\S^{d-1}}\L_\infty^2v_2\,dS
   >
   2d\dashint_{\S^{d-1}}v_2\log|v_2|\,dS.
\end{align}

\end{proposition}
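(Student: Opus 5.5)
The plan is to reduce all four statements to one-dimensional integrals using the slicing formula \eqref{spherical-integral}, with $t=\omega_d^2$. Write $\mathbb E[F]:=\dashint_{\S^{d-1}}F(\omega_d^2)\,dS=\sigma_d\int_0^1F(t)t^{-1/2}(1-t)^{(d-3)/2}\,dt$. This is the expectation under a $\mathrm{Beta}(\tfrac12,\tfrac{d-1}2)$ law, so $\mathbb E[t]=\tfrac1d$ and $\mathbb E[t^2]=\tfrac{3}{d(d+2)}$. Then \eqref{eq:intLinfty-1} is just the variance:
\[
\mathbb E\big[(t-\tfrac1d)^2\big]=\tfrac{3}{d(d+2)}-\tfrac1{d^2}=\tfrac{2(d-1)}{d^2(d+2)}.
\]
For the other three I use Corollary \ref{corollary-small-phi}. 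With $c=d(d-2)$ and $w(t):=\frac{1-t}{ct+1}$, a one-line algebraic identity gives
\[
\A_2v_2=2(1-t)-2w(t),\qquad \L_\infty^2v_2=2v_2+2(1-t)-2w(t).
\]

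For \eqref{eq:intLinfty-2}, multiply by $v_2=t-\tfrac1d$ and use $\mathbb E[v_2(1-t)]=-\mathbb E[v_2^2]$. The two variance terms cancel, leaving
\[
\dashint v_2\L_\infty^2v_2\,dS=-2\,\mathbb E\big[(t-\tfrac1d)\,w(t)\big].
\]
Positivity follows from a Chebyshev-type argument. The function $w$ is strictly decreasing and $t-\tfrac1d$ has zero mean, so $\mathbb E[(t-\tfrac1d)(w(t)-w(\tfrac1d))]<0$, and $\mathbb E[(t-\tfrac1d)w(\tfrac1d)]=0$. For the upper bound $\tfrac1d$, I first drop the region $t>1/d$ (its contribution is negative) and use $0<w\le1$. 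This leaves $2\,\mathbb E[(\tfrac1d-t)_+w]$. The crude bound $\mathbb E|t-\tfrac1d|\le\sqrt{\mathrm{Var}}$ loses a factor near $\sqrt2$, so it must be sharpened. I would use $w(t)\le \frac1{ct+1}$ together with $\tfrac1d-t\le\tfrac1d$, and then bound $\mathbb E[\mathbf 1_{t<1/d}]$ and $\mathbb E[\frac{1}{ct+1}]$ explicitly via the substitution $t=s^2$. Failing that, I would compute $\mathbb E[(t-\tfrac1d)w]$ in closed form. Dividing polynomially by $ct+1$ reduces it to beta moments plus the single integral $\mathbb E[\frac1{ct+1}]$, which after $t=s^2$ is an elementary arctan-type integral (for odd $d$) with a clean bound.

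For \eqref{eq:intLinfty-3}, $v_2$ has mean zero, so $\dashint\L_\infty^2v_2=2\mathbb E[1-t]-2\mathbb E[w]=\tfrac{2(d-1)}d-2\mathbb E[w]$. The claim is therefore equivalent to $\mathbb E[w]<\tfrac12$. After $t=s^2$ this reads
\[
2\sigma_d\int_0^1\frac{(1-s^2)^{(d-1)/2}}{c s^2+1}\,ds<\tfrac12.
\]
For $d=3$ it is explicit: the integral equals $-\tfrac13+\tfrac{4\pi}{9\sqrt3}\approx0.473<\tfrac12$, which is tight but true. For $d\ge4$ I would bound $(1-s^2)^{(d-1)/2}\le e^{-(d-1)s^2/2}$ or simply by $1$. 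This gives $\int_0^\infty\frac{ds}{cs^2+1}=\frac{\pi}{2\sqrt c}$, hence a bound $\sigma_d\pi/\sqrt{d(d-2)}$. Using $\sigma_d\le\sqrt{d/(2\pi)}$, this is below $\tfrac12$ once $d$ is moderately large. The finitely many remaining small $d$ would be checked with the sharper kernel or exactly.

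For \eqref{eq:intLinfty-4}, I bound the right side from above. On $\{t>1/d\}$ we have $0<v_2<1$, so $v_2\log|v_2|<0$ there. On $\{t<1/d\}$, with $y=\tfrac1d-t\in(0,\tfrac1d]$, the integrand is $y\log(1/y)$. Thus
\[
2d\,\mathbb E[v_2\log|v_2|]\le 2d\,\mathbb E\big[y\log(1/y)\mathbf 1_{t<1/d}\big]\le 2\log d\cdot\mathbb P(t<\tfrac1d),
\]
where I used that $y\log(1/y)$ is increasing on $(0,e^{-1})$. I then compare this with the lower bound $\tfrac{d-2}d$ from \eqref{eq:intLinfty-3}, improved if needed by the explicit value $\tfrac{2(d-1)}d-2\mathbb E[w]$. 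If the bound $2\log d\,\mathbb P(t<\tfrac1d)$ is too lossy for small $d$ (where $\log d$ is not small relative to $\mathbb P$), I would instead keep the negative contribution from $t>1/d$ and compute both sides numerically-exactly for $d=3,4,5$, with a uniform asymptotic argument for larger $d$.

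I expect the main obstacle to be \eqref{eq:intLinfty-4}, and to a lesser extent the sharp constants in \eqref{eq:intLinfty-2} and \eqref{eq:intLinfty-3} for small $d$. There the margins are thin (as the $d=3$ value $0.473$ versus $0.5$ shows), so the bounds must stay close to exact.
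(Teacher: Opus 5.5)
Your arguments for \eqref{eq:intLinfty-1}--\eqref{eq:intLinfty-3} are sound. The identity $\A_2v_2=2(1-t)-2w(t)$, the monotonicity argument for positivity in \eqref{eq:intLinfty-2}, and the exact reformulation of \eqref{eq:intLinfty-3} as $\mathbb E[w]<\tfrac12$ are all correct. They do rely on explicit checks for small $d$, whereas the paper's integration by parts and its Cauchy--Schwarz argument with $8\sigma_d^2\ge d-1$ are uniform in $d$.

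The genuine gap is \eqref{eq:intLinfty-4}. Your bound $2d\,\mathbb E[v_2\log|v_2|]\le 2\log d\cdot\mathbb P(t<\tfrac1d)$ is too lossy for small $d$ and also diverges as $d\to\infty$, so your fallback of small-$d$ checks plus an asymptotic argument has nothing to rest on. The left side satisfies $\dashint\L_\infty^2v_2\,dS=\dashint\A_2v_2\,dS<\tfrac{2(d-1)}d<2$. Meanwhile $\mathbb P(\omega_d^2<\tfrac1d)\to\mathbb P(Z^2<1)\approx0.68$ for $Z$ standard Gaussian, so your bound grows like $1.37\log d$. It already fails at $d=3$, where it gives about $1.27$ against a left side of about $0.39$, and at $d=4$, where it gives about $1.69$ against about $0.63$.

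The cause is that dropping $\{t>1/d\}$ throws away the cancellation from $\dashint v_2\,dS=0$. Indeed $\dashint v_2\log|v_2|\,dS=\dashint v_2\log|dv_2|\,dS$, so the factor $\log d$ is spurious and the true right side stays bounded. After this rescaling, however, $dv_2\log|dv_2|>0$ on $\{t>2/d\}$, so no sign argument survives. The ``uniform asymptotic argument for larger $d$'' you defer to is therefore exactly the missing content. It would also need effective constants to be combined with finitely many exact checks.

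The paper avoids this by evaluating the difference of the two sides exactly. It uses $(t-\tfrac1d)t^{-1/2}(1-t)^{(d-3)/2}=-\tfrac2d\,\partial_t\big(t^{1/2}(1-t)^{(d-1)/2}\big)$ and integrates by parts, cutting out a neighborhood of $t=1/d$. This turns $\dashint v_2\log|v_2|\,dS$ into $\sigma_d\,\mathrm{PV}\!\int_0^1\frac{2}{dt-1}\,t^{1/2}(1-t)^{(d-1)/2}\,dt$. Combined with the slicing formula for $\dashint\A_2v_2\,dS$, this gives a single principal value integral with kernel $\frac{2}{dt-1}-\frac{d-2}{d(d-2)t+1}$. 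The substitution $t=\frac{s}{s+d-1}$ moves the singularity to $s=1$, and folding $(1,\infty)$ onto $(0,1)$ via $s\mapsto1/s$ reduces the sign to the elementary inequality $s^{(d-2)/d}(s+d-1)<(d-1)s+1$ on $(0,1)$. That inequality follows from weighted AM--GM, which settles all $d\ge3$ at once with no case checking. Note that the same integration-by-parts identity is what the paper uses for \eqref{eq:intLinfty-2}; it is the tool that captures the cancellation your estimate loses.
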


\begin{proof}
Since $v_2(\omega)=\omega_d^2-\frac{1}{d}$ has integral zero over the sphere, using \eqref{spherical-integral} with $F(t)=t(t-\frac{1}{d})$ we can write
\begin{align*}
    \dashint_{\S^{d-1}}v_2^2 \,dS
    =
    ~&
    \dashint_{\S^{d-1}}\left[\omega_d^2\left(\omega_d^2-\frac{1}{d}\right)-\frac{1}{d}v_2\right]\,dS(\omega)
    \\
    =
    ~&
    \dashint_{\S^{d-1}}\omega_d^2\left(\omega_d^2-\frac{1}{d}\right)\,dS(\omega)
    \\
    =
    ~&
    \sigma_d\left[\int_0^1t^{3/2}(1-t)^{(d-3)/2}\,dt
    -\frac{1}{d}\int_0^1t^{1/2}(1-t)^{(d-3)/2}\,dt\right]
    \\
    =
    ~&
    \frac{B\left(\frac{5}{2},\frac{d-1}{2}\right)-\frac{1}{d}B\left(\frac{3}{2},\frac{d-1}{2}\right)}{B\left(\frac{1}{2},\frac{d-1}{2}\right)}
    \\
    =
    ~&
    \frac{3}{d(d+2)} - \frac{1}{d^2}
    \\
    =
    ~&
    \frac{2(d-1)}{d^2(d+2)},
\end{align*}
which is \eqref{eq:intLinfty-1}. Here we used first that $\dashint_{\S^{d-1}}v_2\,dS=0$, then \eqref{spherical-integral} with the substitution $t=\omega_d^2$ and $F(t)=t(t-\frac1d)$, and finally the beta-function identity together with \eqref{sigma_d} and the standard gamma-function recurrences.

For \eqref{eq:intLinfty-2}, we use Corollary \ref{corollary-small-phi} to write
\begin{equation*}
    \L_\infty^2v_2(\omega)
    =
    \A_2v_2(\omega)+2v_2(\omega)
    =
    \phi(\omega_d^2)
\end{equation*}
with $\phi:[0,1]\to\R$ the smooth function given by
\begin{equation*}
    \phi(t)
    =
    2d(d-2)\frac{t(1-t)}{d(d-2)t+1}+2\left(t-\frac{1}{d}\right)
    =
    2\frac{[(d-1)(d-2)+1]t-\frac{1}{d}}{d(d-2)t+1}.
\end{equation*}
Multiplying by $v_2$ and applying \eqref{spherical-integral} with $F(t)=(t-\frac{1}{d})\phi(t)$, we obtain
\begin{align*}
	\dashint_{\S^{d-1}}v_2\L_\infty^2v_2\,dS
    =
    ~&
    \sigma_d\int_0^1\phi(t)\left[\left(t-\frac{1}{d}\right)t^{-1/2}(1-t)^{(d-3)/2}\right]\,dt
\end{align*}
then integration by parts with
\begin{equation}\label{aux-eq}
    \left(t-\frac{1}{d}\right)t^{-1/2}(1-t)^{(d-3)/2}
    =
    -\frac{2}{d}\,\frac{\partial}{\partial t}\left(t^{1/2}(1-t)^{(d-1)/2}\right)
\end{equation}
yields
\begin{align*}
	\dashint_{\S^{d-1}}v_2\L_\infty^2v_2\,dS
	=
	~&
	\frac{2\sigma_d}{d}\int_0^1\phi'(t)t^{1/2}(1-t)^{(d-1)/2}\,dt
    \\
    =
	~&
	\frac{4\sigma_d(d-1)^2}{d}\int_0^1\frac{t^{1/2}(1-t)^{(d-1)/2}}{(d(d-2)t+1)^2}\,dt.
\end{align*}
Since the integrand is a positive function, we get the first inequality in \eqref{eq:intLinfty-2}. To see the other inequality, we use that
\begin{equation*}
	(d(d-2)t+1)^2-4(d-1)^2t(1-t)
	=
	\left(1-(d(d-2)+2)t\right)^2
	\geq
	0
\end{equation*}
to get the bound
\begin{align*}
	\frac{t^{1/2}(1-t)^{(d-1)/2}}{(d(d-2)t+1)^2}
	=
	\frac{t(1-t)}{(d(d-2)t+1)^2}t^{-1/2}(1-t)^{(d-3)/2}
    \leq
	\frac{t^{-1/2}(1-t)^{(d-3)/2}}{4(d-1)^2}.
\end{align*}
Using this to estimate the integral above and recalling \eqref{sigma_d} we get
\begin{align*}
	\dashint_{\S^{d-1}}v_2\L_\infty^2v_2\,dS
    \leq
	~&
	\frac{\sigma_d}{d}\int_0^1t^{-1/2}(1-t)^{(d-3)/2}\,dt
    =
    \frac{\sigma_d}{d}\,B\left(\frac{1}{2},\frac{d-1}{2}\right)
    =
    \frac{1}{d},
\end{align*}
so we get the second inequality in \eqref{eq:intLinfty-2}.

For the integral in \eqref{eq:intLinfty-3}, we use again Corollary \ref{corollary-small-phi} so $\L_\infty^2v_2=\A_2v_2+2v_2$. Then, using that the integral of $v_2$ over $\S^{d-1}$ vanishes, we get
\begin{equation*}
	\dashint_{\S^{d-1}}\L_\infty^2v_2\,dS
	=
   \dashint_{\S^{d-1}}\A_2v_2\,dS.
\end{equation*}
Applying Cauchy-Schwarz inequality with
\begin{equation*}
	\left(2d(d-2)\omega_d^2(1-\omega_d^2)\right)^{1/2}=\left(\A_2v_2\right)^{1/2}\left(d(d-2)\omega_d^2+1\right)^{1/2}
\end{equation*}
we get
\begin{equation*}
	2d(d-2)\left(\dashint_{\S^{d-1}}|\omega_d|(1-\omega_d^2)^{1/2}\,dS(\omega)\right)^2
	<
	\left(\dashint_{\S^{d-1}}\A_2v_2\,dS\right)\left(\dashint_{\S^{d-1}}\left(d(d-2)\omega_d^2+1\right)\,dS(\omega)\right).
\end{equation*}
The inequality is strict because the function $|\omega_d|(1-\omega_d^2)^{1/2}/(d(d-2)\omega_d^2+1)$ is not constant on $\S^{d-1}$. Next observe that
\begin{equation*}
	\dashint_{\S^{d-1}}\left(d(d-2)\omega_d^2+1\right)\,dS(\omega)
	=
	1+d(d-2)\dashint_{\S^{d-1}}\omega_d^2\,dS(\omega)
	=
	d-1,
\end{equation*}
moreover, in view of \eqref{spherical-integral} with $F(t)=\sqrt{t(1-t)}$, we have
\begin{equation*}
	\dashint_{\S^{d-1}}|\omega_d|(1-\omega_d^2)^{1/2}\,dS(\omega)
	=
	\sigma_d\int_0^1(1-t)^{(d-2)/2}\,dt
	=
	\frac{2\sigma_d}{d}.
\end{equation*}
Replacing this in the inequality above and rearranging we obtain
\begin{equation*}
	\dashint_{\S^{d-1}}\L_\infty^2v_2\,dS
	>
	\frac{8\sigma_d^2(d-2)}{d(d-1)}.
\end{equation*}
To finish the proof of \eqref{eq:intLinfty-3} we just need to show that $8\sigma_d^2\ge d-1$. This holds for $d=3,4$, since $8\sigma_3^2=2$ and $8\sigma_4^2=32/\pi^2>3$. By the recurrence formula for $\sigma_d$ \eqref{eq:sigma-recurrence}, if $8\sigma_d^2\geq d-1$ then
\begin{equation*}
    8\sigma_{d+2}^2=\frac{d^2}{(d-1)^2}8\sigma_d^2\geq\frac{d^2}{d-1}>d+1.
\end{equation*}
Thus, considering the even and odd dimensions separately now proves \eqref{eq:intLinfty-3} for every $d\geq 3$.

Finally, we show \eqref{eq:intLinfty-4} by inspecting the integral of $v_2\log|v_2|$. We start by using \eqref{spherical-integral} with the smooth function given by $F(t)=(t-\frac{1}{d})\log|t-\frac{1}{d}|$ for $t\neq\frac{1}{d}$ and $F(\frac{1}{d})=0$ so we have
\begin{align*}
	\dashint_{\S^{d-1}}v_2\log\left|v_2\right|\,dS
	=
	~&
	\dashint_{\S^{d-1}}\left(\omega_d^2-\frac{1}{d}\right)\log\left|\omega_d^2-\frac{1}{d}\right|\,dS(\omega)
	\\
	=
	~&
	\sigma_d\int_0^1\log\left|t-\frac{1}{d}\right|\left[\left(t-\frac{1}{d}\right)t^{-1/2}(1-t)^{(d-3)/2}\right]\,dt
	\\
	=
	~&
	-\frac{2\sigma_d}{d}\int_0^1\log\left|t-\frac{1}{d}\right|\left[\frac{\partial}{\partial t}\left(t^{1/2}(1-t)^{(d-1)/2}\right)\right]\,dt,
\end{align*}
where we have used \eqref{aux-eq} in the second line. To avoid the singularity at $t=\frac{1}{d}$, we remove the interval $(\frac{1}{d}-\eta,\frac{1}{d}+\eta)$ from the domain of integration with $\eta>0$ sufficiently small. Then integration by parts yields
\begin{multline*}
	\int_{(0,\frac{1}{d}-\eta)\cup(\frac{1}{d}+\eta,1)}\log\left|t-\frac{1}{d}\right|\left[\frac{\partial}{\partial t}\left(t^{1/2}(1-t)^{(d-1)/2}\right)\right]\,dt
	\\
\begin{split}
	=
	~&
	-\int_{(0,\frac{1}{d}-\eta)\cup(\frac{1}{d}+\eta,1)}\frac{t^{1/2}(1-t)^{(d-1)/2}}{t-\frac{1}{d}}\,dt
	\\
	~&
	+\log\eta\left[\left(\frac{1}{d}-\eta\right)^{1/2}\left(1-\frac{1}{d}+\eta\right)^{(d-1)/2}
	-\left(\frac{1}{d}+\eta\right)^{1/2}\left(1-\frac{1}{d}-\eta\right)^{(d-1)/2}\right].
\end{split}
\end{multline*}
Since the expression in brackets is $O(\eta)$ and $O(\eta)\log\eta\to0$ as $\eta\to0^+$ we get
\begin{align*}
	\dashint_{\S^{d-1}}v_2\log\left|v_2\right|\,dS
	=
	~&
	-\frac{2\sigma_d}{d}\int_0^1\log\left|t-\frac{1}{d}\right|\left[\frac{\partial}{\partial t}\left(t^{1/2}(1-t)^{(d-1)/2}\right)\right]\,dt
	\\
	=
	~&
	\sigma_d\,\mathrm{PV}\int_0^1\frac{2}{dt-1}t^{1/2}(1-t)^{(d-1)/2}\,dt.
\end{align*}

On the other hand, in view of Corollary~\ref{corollary-small-phi} and applying \eqref{spherical-integral} with
 \begin{equation*}
     F(t)
     =
     2d(d-2)\frac{t(1-t)}{d(d-2)t+1}
 \end{equation*}
 we get
 \begin{align*}
 	\dashint_{\S^{d-1}}\L_\infty^2v_2\,dS
 	=
     ~&
    \dashint_{\S^{d-1}}\A_2v_2\,dS\notag
     \\
     =
     ~&
    2d(d-2)\dashint_{\S^{d-1}}\frac{\omega_d^2(1-\omega_d^2)}{d(d-2)\omega_d^2+1}\,dS(\omega)\notag
     \\
 	=
 	~&
 	2d(d-2)\sigma_d\int_0^1\frac{t^{1/2}(1-t)^{(d-1)/2}}{d(d-2)t+1}\,dt.
\end{align*}
Combination of these identities yields
\begin{align*}
	\dashint_{\S^{d-1}}\left(v_2\log\left|v_2\right|-\frac{1}{2d}\L_\infty^2v_2\right)\,dS
	=
	~&
	\sigma_d\,\mathrm{PV}\int_0^1\left[\frac{2}{dt-1}-\frac{d-2}{d(d-2)t+1}\right]t^{1/2}(1-t)^{(d-1)/2}\,dt.
\end{align*}
Observe that the integrand has a singularity at $t=\frac{1}{d}$. Performing the change of variables
\begin{equation*}
	t
	=
	\frac{s}{s+d-1},
	\qquad
	dt=\frac{d-1}{(s+d-1)^2}\,ds,
\end{equation*}
so that
\begin{align*}
	\frac{1}{dt-1}
	=
	~&
	\frac{s+d-1}{(d-1)(s-1)},
	\\
	\frac{1}{d(d-2)t+1}
	=
	~&
	\frac{s+d-1}{(d-1)((d-1)s+1)},
	\\
	t^{1/2}(1-t)^{(d-1)/2}
	=
	~&
	\frac{(d-1)^{(d-1)/2}s^{1/2}}{(s+d-1)^{d/2}},
\end{align*}
and $t=\frac{1}{d}$ if and only if $s=1$ we obtain
\begin{multline*}
	\dashint_{\S^{d-1}}\left(v_2\log\left|v_2\right|-\frac{1}{2d}\L_\infty^2v_2\right)\,dS
	\\
\begin{split}
	=
	~&
	(d-1)^{(d-1)/2}\sigma_d\,\mathrm{PV}\int_0^\infty\left[\frac{2}{s-1}-\frac{d-2}{(d-1)s+1}\right]\frac{s^{1/2}}{(s+d-1)^{d/2+1}}\,ds
	\\
	=
	~&
	d(d-1)^{(d-1)/2}\sigma_d\,\mathrm{PV}\int_0^\infty\frac{s+1}{s-1}\frac{s^{1/2}}{((d-1)s+1)(s+d-1)^{d/2+1}}\,ds.
\end{split}
\end{multline*}
Splitting the domain of integration into $(0,1)$ and $(1,\infty)$, and making the change of variables $s\mapsto 1/s$ in the integral over $(1,\infty)$, we obtain
\begin{multline*}
	\mathrm{PV}\int_0^\infty\frac{s+1}{s-1}\frac{s^{1/2}}{((d-1)s+1)(s+d-1)^{d/2+1}}\,ds
	\\
\begin{split}
	=
	~&
	\int_0^1\frac{1+s}{1-s}\left[\frac{s^{(d-1)/2}}{((d-1)s+1)^{d/2+1}(s+d-1)}-\frac{s^{1/2}}{((d-1)s+1)(s+d-1)^{d/2+1}}\right]\,ds
	\\
	=
	~&
	\int_0^1\frac{1+s}{1-s}\frac{s^{-1/2}}{((d-1)s+1)^{d/2+1}(s+d-1)^{d/2+1}}\left[s^{d/2}(s+d-1)^{d/2}-s((d-1)s+1)^{d/2}\right]\,ds.
\end{split}
\end{multline*}
Note that the terms outside the brackets are positive for $0<s<1$. We claim that the terms in brackets are negative, in which case the proof of \eqref{eq:intLinfty-4} would be finished.

To see the claim, note that for $0<s<1$ the inequality
\[
s^{d/2}(s+d-1)^{d/2}
<
s((d-1)s+1)^{d/2}
\]
is equivalent to
\[
s^{(d-2)/d}(s+d-1)<(d-1)s+1.
\]
Define
\[
h(s):=(d-1)s+1-s^{(d-2)/d}(s+d-1).
\]
Then $h(1)=0$ and
\[
h'(s)
=
(d-1)\left(
1-\frac{2s+d-2}{d\,s^{2/d}}
\right).
\]
By the weighted arithmetic--geometric mean inequality $\alpha a+\beta b\ge a^\alpha b^\beta$,
\[
\frac{2s+d-2}{d}
=
\frac{2}{d}s+\frac{d-2}{d}
>
s^{2/d}
\]
for $0<s<1$. Hence $h'(s)<0$ on $(0,1)$. Since $h(1)=0$, it follows that
$h(s)>0$ for $0<s<1$, which proves the desired inequality.
\end{proof}

\begin{remark}
The inequality \eqref{eq:intLinfty-3} can be obtained from \eqref{eq:intLinfty-4} for $d\geq 4$ by showing that
\begin{equation}
    \dashint_{\S^{d-1}}v_2\log|v_2|\,dS
    \geq
    \frac{d-2}{2d^2}.
\end{equation}
In fact, for $d=4$ the equality is attained. However, for $d=3$ the reversed inequality holds.
\end{remark}

\section{Proofs for the computer assisted proof}
\label{sec:appendix_cap}

Here we prove the ingredients of Sections \ref{sec:cap1} and \ref{sec:lifting} that are not themselves computations: Propositions \ref{prop:profile} and \ref{prop:liftprofile}, which turn a spherical profile into a counterexample and into a lifted counterexample, and Lemma \ref{lem:pole}, which encloses the regular solution at the singular endpoint $\theta=0$. The last of these is deduced from a general lemma for singular initial value problems, stated as Lemma \ref{lem:generic} below.

\begin{proof}[Proof of Proposition~\ref{prop:sphericalcomp}]
\phantomsection\label{proof:sphericalcomp}
We work away from the poles, so that $0<\theta<\pi$. Write
$t=\omega_d=\cos\theta$ and $f(\theta)=g(t)=g(\cos\theta)$ for $-1<t<1$.
Then $v(\omega)=g(\omega_d)$.

Using the local extension $v(x)=g(x_d)$, we have
\[
\nabla v(\omega)=g'(t)e_d,
\qquad
D^2v(\omega)=g''(t)e_d\otimes e_d.
\]
Thus, by \eqref{eq:sphere_grad}--\eqref{eq:sphere_hess}, with
$P=I-\omega\otimes\omega$ denoting as before the projection onto the
tangent space,
\[
\nabla_\S v
=P\nabla v
=g'(t)Pe_d,
\]
and
\begin{align*}
D_\S^2v
&=PD^2v\,P-(\nabla v\cdot\omega)P\\
&=g''(t)P(e_d\otimes e_d)P-tg'(t)P\\
&=g''(t)(Pe_d)\otimes(Pe_d)-tg'(t)P.
\end{align*}

Since $|Pe_d|^2=1-t^2$ and $P^2=P$, it follows that
\[
|\nabla_\S v|^2=(1-t^2)g'(t)^2
=\sin^2\theta g'(t)^2.
\]
At this point, the chain rule gives
\[
f'(\theta)=-g'(t)\sin\theta,
\qquad
f''(\theta)=(1-t^2)g''(t)-tg'(t),
\]
and therefore
\[
|\nabla_\S v|^2=f'(\theta)^2.
\]
Similarly,
\begin{align*}
\nabla_\S v\cdot D_\S^2v\,\nabla_\S v
&=g'(t)^2\Bigl(
g''(t)|Pe_d|^4-tg'(t)|Pe_d|^2
\Bigr)\\
&=(1-t^2)g'(t)^2
\bigl((1-t^2)g''(t)-tg'(t)\bigr)\\
&=f'(\theta)^2f''(\theta).
\end{align*}

Finally, taking the trace and using $\tr P=d-1$ as well as $\operatorname{tr}\big((Pe_d)\otimes(Pe_d)\big)=|Pe_d|^2=1-t^2$, we obtain
\begin{align*}
\Delta_\S v
&=(1-t^2)g''(t)-(d-1)tg'(t)\\
&=f''(\theta)-(d-2)tg'(t)\\
&=f''(\theta)+(d-2)\cot\theta\,f'(\theta),
\end{align*}
where the last equality follows from
$-tg'(t)=\cot\theta\,f'(\theta)$.
\end{proof}

\begin{proof}[Proof of Proposition \ref{prop:profile}]
The proof has three steps.

\medskip
\noindent
\emph{Step 1: Regularity of the spherical profile.}
We first show that $v\in C^2(\S^{d-1})$. On the open band
$\theta\in(0,\tfrac\pi2)$, the right hand side of
\eqref{eq:shootingODE} is, by \eqref{eq:capnondeg}, a smooth function
of $(\theta,f,f')$ near the graph of $(f,f')$, and hence
$f\in C^\infty((0,\tfrac\pi2))$.

At the equator, the reflected function
$\tilde f(\theta)=f(\pi-\theta)$ solves \eqref{eq:shootingODE}, since
$\cot(\pi-\theta)=-\cot\theta$. Moreover,
\[
\tilde f\Big(\frac\pi2\Big)=f\Big(\frac\pi2\Big),
\qquad
\tilde f'\Big(\frac\pi2\Big)
=-f'\Big(\frac\pi2\Big)=0.
\]
Since $f(\tfrac\pi2)\neq0$, the denominator in
\eqref{eq:shootingODE} does not vanish there. Uniqueness for the
corresponding initial value problem therefore shows that the reflected
solution agrees with $f$, and the extension is smooth across
$\theta=\tfrac\pi2$.

At the north pole, in local coordinates $y$ centered at $e_d$ with
$|y|=\theta$, we have $v=f(|y|)$. This allows us to use the standard
derivative formulas for radial functions. The same argument applies at
the south pole, using $|y|=\pi-\theta$ and
$f(\pi-\theta)=f(\theta)$. For $y\neq0$,
\[
\partial_i v
=
f'(|y|)\frac{y_i}{|y|}
\]
and
\[
\partial_{ij}v
=
\Big(f''(|y|)-\frac{f'(|y|)}{|y|}\Big)
\frac{y_i y_j}{|y|^2}
+
\frac{f'(|y|)}{|y|}\delta_{ij}.
\]
Since $f'(0)=0$, the mean value theorem gives
\[
\frac{f'(t)}{t}\to f''(0),
\]
while
\[
f''(t)\to f''(0)
\]
by the assumption $f\in C^2$. Hence all second derivatives extend
continuously at the poles, with
\[
D^2v(0)=f''(0)I_{d-1}.
\]
Thus
\[
v\in C^2(\S^{d-1}).
\]

\medskip
\noindent
\emph{Step 2: $p$-harmonicity and regularity.}
By \eqref{eq:gradu},
\[
|\nabla u|^2
=
r^{2\lambda-2}
\bigl(|\nabla_\S v|^2+\lambda^2v^2\bigr),
\]
which is positive for $x\neq0$ by \eqref{eq:capnondeg}. Moreover,
$u\in C^2(\R^d\setminus\{0\})$ by Step 1. Away from the symmetry
axis, Proposition \ref{prop:sphericalp} together with
\eqref{eq:sphericalparttheta} and \eqref{eq:shootingODE} gives
\[
\Delta_p^Nu=0.
\]
Since $\Delta_p^Nu$ is continuous on $\R^d\setminus\{0\}$, the
equation extends to the symmetry axis as well.

Homogeneity gives
\[
|u|\leq Cr^\lambda,
\qquad
|\nabla u|\leq Cr^{\lambda-1},
\qquad
|D^2u|\leq Cr^{\lambda-2}.
\]
Since $1<\lambda<2$, the standard homogeneity estimate gives
\[
u(0)=0,\qquad
\nabla u(0)=0,
\qquad
u\in C^{1,\lambda-1}_{loc}(\R^d).
\]

It remains only to check the equation at $0$. This is the
sign-changing argument used in the proof of Theorem
\ref{thm:existence}, but with explicit directions. Indeed, any $C^2$ test function touching $u$ at
$0$ is $O(|x|^2)$, whereas
\[
u(te_d)=t^\lambda f(0)>0,
\qquad
u(te_1)=t^\lambda f\Big(\frac\pi2\Big)<0.
\]
Since $t^\lambda\gg t^2$ as $t\to0$, no $C^2$ test function can touch
$u$ from either above or below at $0$. Thus both viscosity
inequalities hold vacuously, and $u$ is $p$-harmonic by
\cite{juutinen2001equivalence,kawohl2012solutions}.

Finally, since $\nabla u\neq0$ away from the origin, the same standard
elliptic regularity argument as in Theorem \ref{thm:existence} gives
\[
u\in C^\infty(\R^d\setminus\{0\}).
\]

\medskip
\noindent
\emph{Step 3: The mean-value defect.}
The computation in the proof of Theorem \ref{thm:main} applies
directly, since it used only the homogeneity $u=r^\lambda v$ and the
fact that $v$ takes both signs. Hence \eqref{eq:Mepschi} and
\eqref{eq:Cp} hold with $v$ and $\lambda$ in place of $v_p$ and
$\lambda(p)$.

It remains only to rewrite the quantities there in terms of $f$.
Since $f$ is even about $\tfrac\pi2$,
\[
\max_{\S^{d-1}}v
=
\max_{[0,\pi/2]}f,
\qquad
\min_{\S^{d-1}}v
=
\min_{[0,\pi/2]}f,
\]
and by \eqref{eq:avgsph},
\[
\dashint_{\S^{d-1}}v\,dS
=
2\sigma_d
\int_0^{\pi/2}\sin^{d-2}\theta\,f(\theta)\,d\theta.
\]
Substitution gives \eqref{eq:capCf}.
\end{proof}
\begin{proof}[Proof of Proposition \ref{prop:liftprofile}]
Let $f$, $v$ and $u$ be as in Proposition \ref{prop:profile} for the dimension $m$, so that the conclusions just proved are available for them. Where $y\neq0$ we have $\nabla u_\md=(\nabla u(y),0) \neq0$ and $D^2u_\md=\operatorname{diag}(D^2u(y),0)$, so that $\Delta u_\md=\Delta u$ and $\Delta_\infty^Nu_\md=\Delta_\infty^Nu$, and the equation holds classically for $u_\md$ because it does for $u$. 

Any $C^2$ test function touching $u_{m\to d}$ at $(0,z_0)$ would,
after restricting to the slice $z=z_0$, give a $C^2$ test function
touching $u$ at the origin, which was ruled out in Proposition
\ref{prop:profile}. So no $C^2$ function touches $u_\md$ at any point of $\{y=0\}$, and both viscosity inequalities immediately hold along the whole singular subspace. Therefore $u_\md$ is a viscosity solution
of $\Delta_p^N u_\md=0$ on $\R^d$, and hence is $p$-harmonic by
\cite{juutinen2001equivalence,kawohl2012solutions}

As for the regularity, $\nabla u_\md(y,z)=(\nabla u(y),0)$ gives
\[
|\nabla u_\md(y_1,z_1)-\nabla u_\md(y_2,z_2)|=|\nabla u(y_1)-\nabla u(y_2)|
\leq C|y_1-y_2|^{\lambda-1}\leq C|(y_1,z_1)-(y_2,z_2)|^{\lambda-1}
\]
locally, since $\lambda>1$, so that $u_\md\in C^{1,\lambda-1}_{loc}(\R^d)$; and $u_\md(tx)=t^\lambda u_\md(x)$ for $t>0$. Finally, the extreme values of $u_\md$ over $\bar B^d_\eps$ are those of $u$ over $\bar B^m_\eps$, attained where $z=0$, which is why the extreme values in \eqref{eq:Cpmd} are still taken over $\S^{m-1}$, while the average is given by Proposition \ref{prop:mdavg}. Substituting these two facts into \eqref{eq:Meps} in place of the
corresponding quantities in the proof of Theorem \ref{thm:main} gives
\eqref{eq:capdefect} with $\C_\md(p)$ defined by \eqref{eq:Cpmd}.
\end{proof}

We turn to the singular endpoint. Since the coefficient $\nu/\theta$
is singular at $\theta=0$, the Picard--Lindel\"of theorem does not
apply directly to the data $f(0)=a$, $f'(0)=0$. Instead, the regular
branch has to be selected separately; in particular, the value
$f''(0)$ is dictated by the equation, as in \eqref{eq:fpp}.

We first prove a general lemma for singular equations of the form
\[
f''+\frac{\nu}{\theta}f'=R(\theta,f,f').
\]
For the corresponding linear equation
\[
w''+\frac{\nu}{\theta}w'=h,
\]
the condition $w'(0)=0$ gives
\[
(\theta^\nu w')'=\theta^\nu h,
\]
which suggests the integral formulation and fixed point argument used
below.

The particular function $R$ corresponding to
\eqref{eq:shootingODE} will be introduced afterwards when the lemma
is applied. 
The lemma is elementary and surely known in some form, but
we could not locate a reference with the explicit constants required
for the computer-assisted proof, so we include a complete proof.

Fix $\nu>0$, $a\in\R$, $0<\Theta\leq1$, $r_f>0$ and $r_g>0$, and set
\[
\Omega=\big\{(\theta,f,g)\in\R^3 \st 0<\theta\leq\Theta,\ |f-a|\leq r_f,\
|g|\leq r_g\theta\big\}.
\]
Write $C_b(\Omega)$ for the bounded continuous functions on $\Omega$ with the supremum norm, and for $R\in C_b(\Omega)$ that is continuously differentiable in $(f,g)$ put
\[
\Lip(R):=\sup_\Omega\Big(\Big|\frac{\partial R}{\partial f}\Big|^2
+\Big|\frac{\partial R}{\partial g}\Big|^2\Big)^{1/2}.
\]
so that  $\Lip(R)$ is precisely the Lipschitz constant of $(f,g)\mapsto R(\theta,f,g)$, uniformly in $\theta$.

\begin{lemma}\label{lem:generic}
Let $R\in C_b(\Omega)$ be continuously differentiable in $(f,g)$, let $R_0\in\R$, and set $b=R_0/(\nu+1)$. Assume there are constants $A,B\geq0$ with
\begin{align*}
\mathrm{(A1)}\quad&|R(\theta,f,g)-R_0|\leq A|f-a|+B\theta^2
\ \ \text{on} \ \ \Omega,\\
\mathrm{(A2)}\quad&\Lip(R)<\frac{\nu+1}{2\Theta},\\
\mathrm{(A3)}\quad&|b|+\frac{Ar_f}{\nu+1}+\frac{B\Theta^2}{\nu+3}\leq r_g,\\
\mathrm{(A4)}\quad&r_g\Theta^2\leq2r_f .
\end{align*}
Then there exists $f\in C^2([0,\Theta])$ solving
\begin{equation}\label{eq:sivp}
f''+\frac{\nu}{\theta}f'=R(\theta,f,f') \ \ \text{on} \ \ (0,\Theta],
\qquad f(0)=a, \ \ f'(0)=0,
\end{equation}
with $f''(0)=b$, which satisfies for all $\theta\in[0,\Theta]$
\begin{align}
\Big|f(\theta)-a-\frac{b\theta^2}2\Big|
&\leq\frac{Ar_f\theta^2}{2(\nu+1)}+\frac{B\theta^4}{4(\nu+3)},
\label{eq:genf}\\
\big|f'(\theta)-b\theta\big|
&\leq\frac{Ar_f\theta}{\nu+1}+\frac{B\theta^3}{\nu+3}.
\label{eq:geng}
\end{align}
This solution is unique among $f\in C^1([0,\Theta])\cap C^2((0,\Theta])$ with $f(0)=a$, $f'(0)=0$, $\|f-a\|_\infty\leq r_f$ and $|f'(\theta)|\leq r_g\theta$.
\end{lemma}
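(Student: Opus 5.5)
The plan is to recast \eqref{eq:sivp} as a fixed point problem for the pair $(f,g)=(f,f')$ and apply the Banach contraction principle on a closed subset of $C([0,\Theta])^2$. For the linear model, $w'(0)=0$ gives $(\theta^\nu w')'=\theta^\nu h$. This suggests the map $T(f,g)=(\tilde f,\tilde g)$ with
\[
\tilde g(\theta)=\theta^{-\nu}\int_0^\theta s^\nu R(s,f(s),g(s))\,ds,\qquad \tilde f(\theta)=a+\int_0^\theta \tilde g(s)\,ds .
\]
The map is defined on the set $X$ of continuous pairs with $f(0)=a$, $|f-a|\le r_f$ and $|g(\theta)|\le r_g\theta$, so that the graph lies in $\Omega$. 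The set $X$ is closed in $C([0,\Theta])^2$ and hence complete. A fixed point satisfies $f'=g$ and $(\theta^\nu f')'=\theta^\nu R$, which is \eqref{eq:sivp} on $(0,\Theta]$.

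The first step is self-mapping, together with the enclosures. Write $R=R_0+(R-R_0)$. The constant part contributes exactly $\theta^{-\nu}R_0\theta^{\nu+1}/(\nu+1)=b\theta$. For the remainder, (A1) gives $|R-R_0|\le Ar_f+Bs^2$, and integrating against $s^\nu$ yields
\[
|\tilde g-b\theta|\le \frac{Ar_f\theta}{\nu+1}+\frac{B\theta^3}{\nu+3},
\]
which is \eqref{eq:geng}. Integrating once more gives \eqref{eq:genf}. Then (A3) implies $|\tilde g|\le r_g\theta$, and $|\tilde f-a|\le r_g\theta^2/2\le r_f$ follows from (A4). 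So $T(X)\subset X$.

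The second step, which I expect to be the main one, is the contraction estimate. The convex set $\{|f-a|\le r_f,\ |g|\le r_g\theta\}$ lets the mean value theorem in $(f,g)$ give $|R(s,f_1,g_1)-R(s,f_2,g_2)|\le \Lip(R)\,|(f_1-f_2,g_1-g_2)|$. Hence
\[
|\tilde g_1-\tilde g_2|(\theta)\le \frac{\Lip(R)\,\theta}{\nu+1}\,\|(\delta f,\delta g)\|_\infty ,
\qquad
|\tilde f_1-\tilde f_2|\le \Theta\sup|\tilde g_1-\tilde g_2| .
\]
Combining the two components in the Euclidean norm gives a contraction factor of at most $\Lip(R)\Theta\sqrt{1+\Theta^2}/(\nu+1)\le\sqrt2\,\Lip(R)\Theta/(\nu+1)<1$, using $\Theta\le1$ and (A2). (A2) in fact leaves a factor of $2$ to spare. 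If the norm bookkeeping turned out tighter than expected, I would switch to the weighted norm $\|\delta f\|_\infty+\Theta\|\delta g\|_\infty$ or use the max norm; the margin of $2$ covers either choice. Banach's theorem then gives existence, and uniqueness in $X$ follows from the contraction property.

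The last step is regularity at $0$ and identifying the uniqueness class. Since $R$ is bounded and continuous along the graph, $g$ is $C^1$ on $(0,\Theta]$. For $f''(0)$, the bound \eqref{eq:geng} gives $g(\theta)/\theta\to b$. Continuity of $R$ up to $\theta\to0$ is not assumed, but (A1) together with $f(\theta)\to a$ gives $R(\theta,f,g)\to R_0$. Using the ODE, $f''=R-\nu g/\theta\to R_0-\nu b=b$, so $f\in C^2([0,\Theta])$ with $f''(0)=b$.

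For uniqueness in the stated class, take any $f\in C^1([0,\Theta])\cap C^2((0,\Theta])$ with $f(0)=a$, $f'(0)=0$ and the two bounds. Then $(\theta^\nu f')'=\theta^\nu R$ integrates from $0$, because $\theta^\nu f'\to0$, so $(f,f')$ is a fixed point of $T$ in $X$. It therefore coincides with the solution just constructed.
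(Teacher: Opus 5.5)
Your proposal is correct and follows the paper's proof essentially step for step. It uses the same integral operator $T$ and the same invariant set $X$, gets self-mapping from (A1), (A3), (A4) and contraction from (A2), and has the same regularity and uniqueness arguments. The only difference is that you contract in the plain sup norm instead of the paper's weighted norm $\|f\|_\infty+\Theta\sup_{\theta>0}|g(\theta)|/\theta$, and your factor $\sqrt2\,\Lip(R)\Theta/(\nu+1)<1$ is valid.

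One small slip: \eqref{eq:geng} alone does not give $g(\theta)/\theta\to b$, because after dividing by $\theta$ the term $Ar_f/(\nu+1)$ does not vanish. Instead, insert your own observation $R(\theta,f(\theta),g(\theta))\to R_0$ into $g(\theta)/\theta=\theta^{-\nu-1}\int_0^\theta s^\nu R\,ds$, which then tends to $R_0/(\nu+1)=b$, as the paper does.
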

\begin{proof}
 The idea is to rewrite the singular equation in integral form and apply
Banach's fixed point theorem. Indeed, write
\[
(\theta^\nu f')'
=
\nu\theta^{\nu-1}f'
+\theta^\nu f''
=
\theta^\nu\left(f''+\frac{\nu}{\theta}f'\right)
=
\theta^\nu R(\theta,f,f').
\]
Let $T$ be the map defined on a suitable space $X$ obtained by integrating this identity once
to define the new derivative and then once more to define the new
function, so that a fixed point $(f,g)$ of $T$ satisfies $g=f'$ and
hence solves \eqref{eq:sivp}. Assumptions \emph{(A1)},
\emph{(A3)} and \emph{(A4)} are used to show that
\[
T(X)\subset X,
\]
while \emph{(A2)} shows that $T$ is a contraction.
Banach's fixed point theorem therefore gives the unique fixed point,
which is the desired regular solution.

To work out the details, let
\[
X=\bigl\{(f,g)\in C([0,\Theta])^2 \st f(0)=a,\ g(0)=0,\ 
\|f-a\|_\infty\leq r_f,\ |g(\theta)|\leq r_g\theta
\ \text{for all } \theta\in[0,\Theta]\bigr\}.
\]
For $g(0)=0$, set
\[
\|g\|_w=\sup_{0<\theta\leq\Theta}\frac{|g(\theta)|}{\theta},
\qquad
N(f,g)=\|f\|_\infty+\Theta\|g\|_w,
\]
and equip $X$ with the metric induced by $N$. Then $X$ is complete as the Banach fixed point theorem requires.
Define $T(f,g)=(F,G)$ by $G(0)=0$,
\[
G(\theta)=\theta^{-\nu}\int_0^\theta
t^\nu R(t,f(t),g(t))\,dt
\quad\text{for }\theta>0,
\qquad
F(\theta)=a+\int_0^\theta G(s)\,ds
\quad\text{for }\theta\geq0.
\]
Then $F'=G$, and any fixed point of $T$ satisfies
$G'=-\nu G/\theta+R(\theta,F,G)$, and hence \eqref{eq:sivp}.
Since $R$ is bounded, $G(\theta)\to0$ as $\theta\to0$, so $G$ is
continuous at $\theta=0$ as well.

We first show that $T$ maps $X$ into itself. Since $\int_0^\theta t^\nu\dt=\theta^{\nu+1}/(\nu+1)$ and $R_0=(\nu+1)b$, assumption (A1) together with $\|f-a\|_\infty\leq r_f$ gives
\[
|G(\theta)-b\theta|
=\Big|\theta^{-\nu}\int_0^\theta t^\nu(R-R_0)\dt\Big|
\leq\theta^{-\nu}\int_0^\theta t^\nu(Ar_f+Bt^2)\dt
=\frac{Ar_f\theta}{\nu+1}+\frac{B\theta^3}{\nu+3},
\]
so that $|G(\theta)|\leq(|b|+\frac{Ar_f}{\nu+1} +\frac{B\Theta^2}{\nu+3})\theta\leq r_g\theta$ by (A3). Integrating once more,
\[
\Big|F(\theta)-a-\frac{b\theta^2}2\Big|
\leq\frac{Ar_f\theta^2}{2(\nu+1)}+\frac{B\theta^4}{4(\nu+3)},
\]
and hence, using (A3) again and then (A4), $|F(\theta)-a|\leq(|b|+\frac{Ar_f}{\nu+1}+\frac{B\Theta^2}{\nu+3}) \frac{\Theta^2}2\leq\frac{r_g\Theta^2}2\leq r_f$. Since $F(0)=a$, $G(0)=0$ and both are continuous at $0$, we conclude that $T(X)\subset X$.

To see that $T$ is a contraction, set $k=2\Lip(R)\Theta/(\nu+1)$, so that $k<1$ by (A2). Let $(f_1,g_1),(f_2,g_2)\in X$ and write $\delta f=f_1-f_2$, $\delta g=g_1-g_2$ and $\delta R(t)=R(t,f_1(t),g_1(t))-R(t,f_2(t),g_2(t))$. For each fixed $t$ the box $\{(f,g) \st |f-a|\leq r_f,\ |g|\leq r_gt\}$ is convex, so $(f,g)\mapsto R(t,f,g)$ is Lipschitz there with constant $\Lip(R)$, and
\[
|\delta R(t)|\leq\Lip(R)\big(|\delta f(t)|^2+|\delta g(t)|^2\big)^{1/2}
\leq\Lip(R)\big(\|\delta f\|_\infty+|\delta g(t)|\big)
\leq\Lip(R)\big(\|\delta f\|_\infty+t\|\delta g\|_w\big).
\]
Writing $(F_i,G_i)=T(f_i,g_i)$, $\delta F=F_1-F_2$ and $\delta G=G_1-G_2$, we therefore have
\[
\|\delta G\|_w\leq\Lip(R)\Big(\frac{\|\delta f\|_\infty}{\nu+1}
+\frac{\Theta\|\delta g\|_w}{\nu+2}\Big)
\ \ \text{and} \ \
\|\delta F\|_\infty\leq\frac{\Theta^2}2\Lip(R)
\Big(\frac{\|\delta f\|_\infty}{\nu+1}+\frac{\Theta\|\delta g\|_w}{\nu+2}
\Big),
\]
and hence, since $\Theta\leq1$,
\[
N(\delta F,\delta G)
\leq\Big(\frac{\Theta^2}2+\Theta\Big)\Lip(R)
\Big(\frac{\|\delta f\|_\infty}{\nu+1}+\frac{\Theta\|\delta g\|_w}{\nu+2}
\Big)\leq k\,N(\delta f,\delta g).
\]
Banach's fixed point theorem gives a unique fixed point $(f,g)\in X$ with $g=f'$, and applying the self-mapping estimates to the fixed point itself yields \eqref{eq:genf} and \eqref{eq:geng}. For the uniqueness statement, if $f$ is any solution in the stated class then $(\theta^\nu f')'=\theta^\nu R(\theta,f,f')$, and integrating from $0$, the boundary term vanishing since $|\theta^\nu f'|\leq r_g\theta^{\nu+1}$, shows that $(f,f')$ is a fixed point of $T$ in $X$ and hence equals the one above.

For the regularity at $0$, note that $f''=-\nu f'/\theta+R$ is continuous on $(0,\Theta]$. As $\theta\to0^+$ we have $f(\theta)\to a$, so $R(\theta,f(\theta),f'(\theta))\to R_0$ by (A1), and
\[
\frac{f'(\theta)}\theta
=\theta^{-\nu-1}\int_0^\theta t^\nu R(t,f(t),f'(t))\dt
\longrightarrow\frac{R_0}{\nu+1}=b .
\]
Therefore $f''(\theta)\to-\nu b+R_0=b$, and $f\in C^2([0,\Theta])$ with $f''(0)=b$.
\end{proof}



Next we verify that our ODE satisfies the assumptions of Lemma~\ref{lem:generic},
which provides rigorous initial values away from the singular endpoint. For this purpose, set
\[
D=(p-1)g^2+\lambda^2f^2,\qquad
G_1=\frac{g^2+\lambda^2f^2}{D},
\]
and
\[
G_2=
\frac{\kappa f(g^2+\lambda^2f^2)
 +(p-2)\lambda^2fg^2}{D},
\]
whenever $D>0$ and where $\kappa=\kappa_p(\lambda)$. Then \eqref{eq:shootingODE} can be
written as
\[
f''=-(d-2)\cot\theta\,G_1f'-G_2,
\]
and hence
\begin{align*}
    R(\theta,f,g)
&=(d-2)\left(\frac1\theta-\cot\theta\,G_1\right)g-G_2.
\end{align*}
Since $G_1=1$ and $G_2=\kappa f$ when $g=0$,
\[
R(\theta,a,0)=-\kappa a,\qquad \theta>0,
\]
and therefore
\[
\lim_{\theta\to0}R(\theta,a,0)=-\kappa a.
\]

\begin{proof}[Proof of Lemma \ref{lem:pole}]
We apply Lemma \ref{lem:generic} with $\nu=d-2$, $a=1-\frac1d$, $r_f=\kappa\Theta^2/d$, $r_g=\rho=2\kappa/d$, and $R$ as above, on the region
\[
\Omega=\Big\{(\theta,f,g) \st 0<\theta\leq\Theta,\
|f-a|\leq\frac{\kappa\Theta^2}d,\ |g|\leq\rho\theta\Big\}.
\]
By (H1) we have $\kappa\Theta^2/d\leq a/2$, so on $\Omega$ we have $\frac a2\leq f\leq\frac{3a}2$ and $D\geq\lambda^2f^2\geq\lambda^2a^2/4>0$; in particular $R$ is continuous on $\Omega$ and continuously differentiable in $(f,g)$.

\medskip
\noindent
\emph{Step 1: Verification of \emph{(A1)}.}
Set $R_0=-\kappa a$. We first bound $R-R_0$ on $\Omega$. For $0<\theta\leq\frac12$,
\begin{equation}\label{eq:cot}
0\leq\frac1\theta-\cot\theta\leq0.34\,\theta
\ \ \text{and} \ \
0<\cot\theta\leq\frac1\theta .
\end{equation}
Indeed, $\frac1\theta-\cot\theta=2\sum_{n\geq1}\zeta(2n)\theta^{2n-1} \pi^{-2n}$ has positive terms, and for $\theta\leq\frac12$ the sum is at most $\frac{2\theta}{\pi^2}[\zeta(2)+\zeta(4)\frac{(\theta/\pi)^2} {1-(\theta/\pi)^2}]\leq0.34\,\theta$. On $\Omega$,
\[
1-G_1=\frac{(p-2)g^2}{D}\leq\frac{(p-2)g^2}{\lambda^2f^2}
\leq\frac{4(p-2)\rho^2\theta^2}{\lambda^2a^2}=c_1\theta^2,
\]
and hence, by \eqref{eq:cot}, $0\leq\frac1\theta-\cot\theta\,G_1=(\frac1\theta-\cot\theta) +\cot\theta(1-G_1)\leq(0.34+c_1)\theta$. Writing $G_2=\kappa fG_1+(p-2)\lambda^2fg^2/D$ and using $f\in[\frac a2,\frac{3a}2]$ and $D\geq\lambda^2f^2$,
\[
|G_2-\kappa a|\leq\kappa|f-a|+\kappa f(1-G_1)+\frac{(p-2)g^2}f
\leq\kappa|f-a|+\Big(\frac{2\kappa(p-2)\rho^2}{\lambda^2a}
+\frac{2(p-2)\rho^2}a\Big)\theta^2,
\]
where we used $\kappa f(1-G_1)\leq\kappa(p-2)g^2/(\lambda^2f) \leq2\kappa(p-2)\rho^2\theta^2/(\lambda^2a)$. Combining the two terms of $R$,
\[
|R(\theta,f,g)+\kappa a|\leq\kappa|f-a|+c_0\theta^2
\ \ \text{on} \ \ \Omega.
\]
Thus $R\in C_b(\Omega)$, and \emph{(A1)} holds with
$R_0=-\kappa a$, $A=\kappa$ and $B=c_0$; note that
$Ar_f=\kappa^2\Theta^2/d$.

\medskip
\noindent
\emph{Step 2: Verification of \emph{(A2)}.}
We next bound the partial derivatives of $R$ on $\Omega$. Direct computation gives
\[
\frac{\partial G_1}{\partial f}=\frac{2\lambda^2f(p-2)g^2}{D^2},\qquad
\frac{\partial G_1}{\partial g}=-\frac{2(p-2)\lambda^2f^2g}{D^2},
\]
\[
\frac{\partial G_2}{\partial f}=\kappa G_1
+\kappa f\frac{\partial G_1}{\partial f}
+\frac{(p-2)\lambda^2g^2(D-2\lambda^2f^2)}{D^2},\qquad
\frac{\partial G_2}{\partial g}=\kappa f\frac{\partial G_1}{\partial g}
+\frac{2(p-2)\lambda^4f^3g}{D^2}.
\]
Using $D\geq\lambda^2f^2$, $f\geq a/2$, $|D-2\lambda^2f^2|\leq D$, and cancelling powers of $f$,
\[
\Big|\frac{\partial G_1}{\partial f}\Big|
\leq\frac{2(p-2)g^2}{\lambda^2f^3}\leq\gamma_f\theta^2,\qquad
\Big|\frac{\partial G_1}{\partial g}\Big|
\leq\frac{2(p-2)|g|}{\lambda^2f^2}\leq\gamma_g\theta,
\]
\[
\Big|\frac{\partial G_2}{\partial f}\Big|
\leq\kappa+2\kappa c_1\theta^2+\frac{4(p-2)\rho^2}{a^2}\theta^2,\qquad
\Big|\frac{\partial G_2}{\partial g}\Big|
\leq\tfrac32a\kappa\gamma_g\theta+\frac{4(p-2)\rho}a\theta .
\]
Since $\partial_fR=-(d-2)\cot\theta\,g\,\partial_fG_1-\partial_fG_2$ and $\partial_gR=(d-2)(\frac1\theta-\cot\theta\,G_1) -(d-2)\cot\theta\,g\,\partial_gG_1-\partial_gG_2$, using $\cot\theta\leq1/\theta$ and $|g|\leq\rho\theta$ we obtain
\[
\Big|\frac{\partial R}{\partial f}\Big|\leq\kappa+c_f\theta^2\leq L_f
\ \ \text{and} \ \
\Big|\frac{\partial R}{\partial g}\Big|\leq L_g\theta\leq L_g\Theta
\ \ \text{on} \ \ \Omega,
\]
so that $\Lip(R)\leq(L_f^2+L_g^2\Theta^2)^{1/2}$ and (A2) follows from (H3), since $\nu+1=d-1$.

\medskip
\noindent
\emph{Step 3: Verification of \emph{(A3)}--\emph{(A4)} and application of Lemma \ref{lem:generic}.}
 With $b=-\kappa/d$, $\nu+1=d-1$ and $\nu+3=d+1$, assumption (A3) reads
\[
\frac\kappa d+\frac{\kappa^2\Theta^2}{d(d-1)}+\frac{c_0\Theta^2}{d+1}
\leq\frac{2\kappa}d=r_g,
\]
which is exactly (H2), while (A4) holds with equality, since $r_g\Theta^2=2\kappa\Theta^2/d=2r_f$.
Lemma \ref{lem:generic} now yields a solution $f\in C^2([0,\Theta])$ of \eqref{eq:shootingODE} on $(0,\Theta]$ with $f(0)=a$, $f'(0)=0$ and $f''(0)=b=-\kappa/d$, satisfying \eqref{eq:capef} and \eqref{eq:capeg}, these being \eqref{eq:genf} and \eqref{eq:geng} with $Ar_f=\kappa^2\Theta^2/d$ and $B=c_0$, together with the stated uniqueness. Finally, by (H2),
\[
\frac{e_f(\theta)}{\theta^2}\leq\frac12
\Big[\frac{\kappa^2\Theta^2}{d(d-1)}+\frac{c_0\Theta^2}{d+1}\Big]
\leq\frac{\kappa}{2d},
\]
so $f\leq a$ on $[0,\Theta]$ with $\max_{[0,\Theta]}f=f(0)=a$, while $f\geq a-\frac{\kappa\Theta^2}d\geq\frac a2>0$ by (H1).

\medskip
\noindent
\emph{Step 4: Continuity in $\lambda$.}
 Fix $p$ and let $\Lambda$ be a compact interval on which the hypotheses of the lemma hold. Put $M=\max_{\lambda\in\Lambda}\kappa_p(\lambda)$, $\rho_*=2M/d$ and $\nu=d-2$. The solutions constructed above all satisfy $\frac a2\leq f_\lambda\leq\frac{3a}2$ and $|f'_\lambda(\theta)|\leq\rho_*\theta$, so their graphs lie in the common region
\[
\Omega_*=\big\{(\theta,f,g) \st 0<\theta\leq\Theta,\ \tfrac a2\leq f\leq\tfrac{3a}2,\ |g|\leq\rho_*\theta\big\}.
\]
The derivative estimates above, with $\kappa\leq M$, with $\rho$ replaced by $\rho_*$, and with $\lambda\geq\min\Lambda>0$, give a finite constant $L$, independent of $\lambda\in\Lambda$, for which $R_\lambda$ is $L$-Lipschitz in $(f,g)$ on every fixed $\theta$ slice of $\Omega_*$. Moreover $\lambda\mapsto R_\lambda$ is continuous into $C_b(\Omega_*)$. Indeed, writing $g=\theta z$, the formula for $R$ recorded above becomes
\[
R_\lambda(\theta,f,\theta z)=\nu(1-\theta\cot\theta)z+\nu\theta\cot\theta\,(1-G_1)z-G_2,
\]
where $G_1$ and $G_2$ are evaluated at $(\lambda,f,\theta z)$; since $\theta\cot\theta\to1$, $G_1\to1$ and $G_2\to\kappa_p(\lambda)f$ as $\theta\to0$, this expression extends continuously to $\theta=0$ on the compact set $\Lambda\times[0,\Theta]\times[\frac a2,\frac{3a}2]\times[-\rho_*,\rho_*]$, on which the denominator is bounded below by $(\min\Lambda)^2a^2/4>0$. Uniform continuity therefore gives $\eps_{\lambda,\mu}:=\|R_\lambda-R_\mu\|_{C_b(\Omega_*)}\to0$ as $\mu\to\lambda$.
Write $g_\lambda=f'_\lambda$. Integrating $(\theta^\nu g_\lambda)'=\theta^\nu R_\lambda(\theta,f_\lambda,g_\lambda)$ from $0$, the boundary term vanishing because $g_\lambda(\theta)=O(\theta)$, gives
\[
g_\lambda(\theta)=\theta^{-\nu}\int_0^\theta t^\nu R_\lambda(t,f_\lambda(t),g_\lambda(t))\dt
\ \ \text{and} \ \
f_\lambda(\theta)=a+\int_0^\theta g_\lambda(s)\, ds.
\]
Set $Y(\theta)=|f_\lambda(\theta)-f_\mu(\theta)|+|g_\lambda(\theta)-g_\mu(\theta)|$. The slices of $\Omega_*$ are convex and $(t/\theta)^\nu\leq1$ for $0\leq t\leq\theta$, so subtracting these identities for $\lambda$ and for $\mu$ yields
\[
Y(\theta)\leq\int_0^\theta\big((1+L)Y(t)+\eps_{\lambda,\mu}\big)\dt ,
\]
and Gronwall's inequality gives $Y(\theta)\leq\eps_{\lambda,\mu}\big(e^{(1+L)\theta}-1\big)/(1+L)$ for $0\leq\theta\leq\Theta$. Since $\eps_{\lambda,\mu}\to0$ as $\mu\to\lambda$, the map $\lambda\mapsto(f_\lambda,f'_\lambda)\in C([0,\Theta])^2$ is continuous.
\end{proof}

Finally, we establish the cylindrical lifting result.
\begin{proof}[Proof of Proposition \ref{prop:mdavg}]
We recall the \emph{beta function} is given by \[B(a,b) = \int_0^1 t^{a-1}(1-t)^{b-1}\, dt = \frac{\Gamma(a)\Gamma(b)}{\Gamma(a+b)}.\] A more convenient form for us in this proof, which follows from a change of variables, is
\begin{equation}\label{eq:betafunction}
\int_0^1 r^{p-1}(1-r^2)^{q-1}\, dr = \frac12 B(\tfrac p2,q) = \frac{\Gamma(\tfrac p2)\Gamma(q)}{2\Gamma(\tfrac p2 + q)}.
\end{equation}

Let $k = d-m$. We now compute
\begin{align*}
\dashint_{B^d_\eps} u_\md \, dx &= \frac{1}{\eps^d|B^d_1|}\int_{B^m_\eps} \int_{B^k_{\sqrt{\eps^2-|y|^2}}} u_\md(y,z) \, dz \, dy\\
&=\frac{|B^k_1|}{\eps^d|B^d_1|}\int_{B^m_\eps} (\eps^2 - |y|^2)^{k/2} u(y) \, dy\\
&=\frac{|B^k_1|}{\eps^d|B^d_1|}\int_0^\eps (\eps^2 - r^2)^{k/2} \int_{\partial B_r^m}r^\lambda v(\tfrac yr)\, dS(y) \, dr\\
&=\frac{|B^k_1||\S^{m-1}|}{\eps^d|B^d_1|}\int_0^\eps r^{m+\lambda-1} (\eps^2 - r^2)^{k/2}  \, dr\dashint_{\S^{m-1}}v\, dS.
\end{align*}
Using a change of variables and \eqref{eq:betafunction} we compute \[\int_0^\eps r^{m+\lambda-1} (\eps^2 - r^2)^{k/2}  \, dr = \epsilon^{d+\lambda}\int_0^1 r^{m+\lambda-1}(1-r^2)^{k/2}\, dr =  \frac{\eps^{d+\lambda}\Gamma(\tfrac{m+\lambda}2)\Gamma(\frac k2 + 1)}{2\Gamma(\tfrac{d+\lambda}2 + 1)}.\] Using the standard formulas for volumes of balls and spheres, we now have
\begin{align*}
\dashint_{B^d_\eps} u_\md \, dx  &=\frac{\eps^\lambda|B^k_1||\S^{m-1}|\Gamma(\tfrac{m+\lambda}2)\Gamma(\frac k2 + 1)}{2|B^d_1|\Gamma(\tfrac{d+\lambda}2 + 1)}\dashint_{\S^{m-1}}v\, dS\\
&=\frac{m}{2}\cdot \frac{\eps^\lambda \Gamma(\tfrac d2+1)\Gamma(\tfrac{m+\lambda}2)}{\Gamma(\tfrac m2 + 1)\Gamma(\tfrac{d+\lambda}2 + 1)}\dashint_{\S^{m-1}}v\, dS = \frac{\eps^\lambda d}{d+\lambda}\H_\md(\lambda) \dashint_{\S^{m-1}}v\, dS.\qedhere
\end{align*}
\end{proof}

\bibliography{ref}
\bibliographystyle{abbrv}

\end{document}